\begin{document}
\begin{CJK}{GBK}{song}

\newtheorem{theorem}{Theorem}
\newtheorem{lemma}{Lemma}
\newtheorem{definition}{Definition}
\newtheorem{remark}{Remark}
\newtheorem{corollery}{Corollery}

\title{\bf The Homology Groups of $G_{n,m}(\mathbb{R})$}
\author{Xu Chen \footnote{{\it Email:} xiaorenwu08@163.com. ChongQing, China }}
\date{}
\maketitle

\begin{abstract}
In this article, we computed the homology groups of real Grassmann manifold $G_{n,m}(\mathbb{R})$ by Witten complex.
\end{abstract}

\section{Introduction}
For a Morse function $f$ on a compact manifold $M$, and choosing a Riemannian metric $g$ on $M$. In [1], Witten introduces a chain complex and the homology groups of $M$ by studing the negative gradient vector field associated to $(f,g)$, and he claimes that these groups are the ordinary homology groups. Salamon gives a more detailed and precise interpretation to Witten's method in [2], we can also see the books [3] and [4]. Here below we give a brief description to Witten's method.

Let $M$ be a n-dimensionial compact smooth manifold and $f$ be a Morse function on $M$. We can choose a suitable Riemannian metric on $M$ such that the negative gradient vector field $-\nabla f$ to be Morse-Smale type(i.e.$f$ is a Morse-Smale function), then the connecting orbits determine the following chain complex.

First choose an orientation of the vector space $E^{u}(p)=T_{p}W^{u}(p)$ for every critical point $p$ of $f$ and denote by $\langle p\rangle$ the pair consisting of the critical point $p$ and the chosen orientation. For $r=1,2,\cdots \dim M$, denote by $C_{r}$ the free group $C_{r}=\bigoplus_{p}\mathbb{Z}\langle p\rangle$ where $p$ runs over all critical points of index $r$. The function $f$ being of Morse-Smale function implies that $W^{u}(p)\cap W^{s}(q)$ consists of finitely many trajectories if ${\rm{ind}}(p)-{\rm{ind}}(q)=1$. In this case one can define an integer $n(p,q)$ by assigning a number $+1$ or $-1$ to every connecting orbit and taking the sum. Let $\varphi(t)$ be such a connecting orbit meaning a solution of $\dot{x}=-\nabla f$ with $\lim_{t\rightarrow -\infty}\varphi(t)=p$ and $\lim_{t\rightarrow +\infty}\varphi(t)=q$. Then $\langle p\rangle$ induces an orientation on the orthogonal complement $E^{u}_{\varphi}(p)$ of $v=\lim_{t\rightarrow-\infty}\mid\dot{\varphi}(t)\mid^{-1}\dot{\varphi}(t)$ in $E^{u}(p)$. Then the negative gradient flow induces an isomorphism between $E^{u}_{\varphi}(p)$ onto $E^{u}(p)$ and we define $n_{\varphi}$ to be $+1$ and $-1$ according to whether this map preserved or reverse the orientation.\\
Define $$n(p,q)=\sum_{\varphi}n_{\varphi}$$ where the sum runs over all orbits of $\dot{x}=-\nabla f$ connecting $p$ and $q$. Then Witten's boundary operator $\partial: C_{r+1}\longrightarrow C_{r}$ of the chain complex is defined by
$$\partial\langle p\rangle=\sum_{q}n(p,q)\langle q\rangle$$ where $q$ runs over all critical points of index $r$.\\

Witten claims that $\partial$ is a boundary operator, i.e.$\partial^{2}=0$, therefore
$$0\rightarrow C_{n}\rightarrow C_{n-1}\rightarrow\cdots\rightarrow C_{1}\rightarrow C_{0}\rightarrow 0$$
is a chain complex. Define for $r=0,1,2,\cdots,n,$
$$H^{W}_{r}(M,\mathbb{Z})=\frac{\ker\{\partial:C_{r}\longrightarrow C_{r-1} \}}{\partial(C_{r+1})}$$
\begin{enumerate}
\item $H^{W}_{*}(M,\mathbb{Z})$ are independent of the choice of the Riemannian metric on $M$;
\item $H^{W}_{*}(M,\mathbb{Z})=H_{*}(M,\mathbb{Z})$, the usual homology groups of $M$.
\end{enumerate}

To compute the homology groups of real Grassmann manifold by Witten complex comes from the work of Feng Hui-tao on $G_{5,2}(\mathbb{R})$(see [5]) for provide a nontrivial example of Witten's method. In [6], Qiao Pei-zhi gives the result on $\mathbb{R}P^{n}$, i.e.$G_{n+1,1}(\mathbb{R})$. In [7], Yang Ying gives the result on $G_{n,2}(\mathbb{R})$. In this article we will give the result on $G_{n,m}(\mathbb{R})$. Computation of the homology groups of $G_{n,m}(\mathbb{C})$ by Witten complex see [4]. We must point out that the comuptation of the homology groups of Grassmann manifold has been known to topologist by use Schubert calculus(see [10] and [11]).

\section{A Morse funtion on the Grassmann manifold $G_{n,m}(\mathbb{R})$}
Let the Grassmann manifold $G_{n,m}(\mathbb{R})$ is the set of all m-dimensional linear subspaces of n-dimensional real vector space $\mathbb{R}^n$.
Set
$$
M=\left\{(x_{\alpha k})=\left(
  \begin{array}{cccc}
    x_{11} & x_{12} & \cdots & x_{1n} \\
    x_{21} & x_{22} & \cdots & x_{2n} \\
    \hdotsfor{4}\\
    x_{m1} & x_{m2} & \cdots & x_{mn} \\
  \end{array}
\right) \Bigg| x_{\alpha k}\in\mathbb{R},\textmd{rank}(x_{\alpha k})=m\right\}
,$$
then the Grassmann manifold $G_{n,m}(\mathbb{R})$ can be defined by $$G_{n,m}(\mathbb{R})=M/GL(m,\mathbb{R}).$$

Let $\pi :M\rightarrow G_{n,m}(\mathbb{R})$ is the standard projection and define

$$
U_{i_{1}i_{2}\cdots i_{m}}=\left\{(x_{\alpha k})\Bigg|x_{\alpha k}\in
M,\left(\begin{array}{cccc}
    x_{1i_{1}} & x_{1i_{2}} & \cdots & x_{1i_{m}} \\
    x_{2i_{1}} & x_{2i_{2}} & \cdots & x_{2i_{m}} \\
    \hdotsfor{4}\\
    x_{mi_{1}} & x_{mi_{2}} & \cdots & x_{mi_{m}} \\
  \end{array}\right)=I_{m}
  \right\},1\leq i_{1}<i_{2}<\cdots<i_{m}\leq n.
$$
where $I_{m}$ is the identity matrix of rank m.

$$\phi _{i_{1}i_{2}\cdots i_{m}}:U_{i_{1}i_{2}\cdots i_{m}}\rightarrow \mathbb{R}^{m(n-m)},$$
$$\phi_{i_{1}i_{2}\cdots i_{m}}(x_{\alpha k})=(x_{1k_1},x_{1k_2},\cdots
,x_{1k_{n-m}},x_{2k_1},x_{2k_2},\cdots ,x_{2k_{n-m}},\cdots,x_{mk_1},x_{mk_2},\cdots
,x_{mk_{n-m}}).$$
where $k_s\neq i_{1},i_{2},\cdots,i_{m};~s=1,2,\cdots ,n-m;~1\leq k_1<\cdots <k_{n-m}\leq n.$

Set
$$\widetilde{U}_{i_{1}i_{2}\cdots i_{m}}=\pi (U_{i_{1}i_{2}\cdots i_{m}}),~\widetilde{\phi }_{i_{1}i_{2}\cdots i_{m}}=\phi_{i_{1}i_{2}\cdots i_{m}}\circ \pi ^{-1}:\widetilde{U}_{i_{1}i_{2}\cdots i_{m}}\rightarrow \mathbb{R}^{m(n-m)}.$$
Then $\{(\widetilde{U}_{i_{1}i_{2}\cdots i_{m}},\widetilde{\phi }_{i_{1}i_{2}\cdots i_{m}})|1\leq i_{1}<i_{2}<\cdots<i_{m}\leq n\}$ are the local coordinate covering of $G_{n,m}(\mathbb{R})$.

Here we will construct the Morse function on $G_{n,m}(\mathbb{R})$ by the same way as in [5],[6],[7].

Let $0<\lambda_{1}<\lambda_{2}<\cdots<\lambda_{n}$ be fixed numbers, for any $(x_{\alpha k})\in M$,
set $$\xi _\alpha =(x_{\alpha 1},x_{\alpha 2},\cdots ,x_{\alpha n}),$$
$$\overline{\xi }_\alpha=(\dfrac{x_{\alpha 1}}{\lambda
_1},\dfrac{x_{\alpha 2}}{\lambda _2},\cdots \dfrac{x_{\alpha
n}}{\lambda _n}),~\alpha =1,2,\cdots,m.$$

$$
\Delta=\det\left(\begin{array}{cccc}
    \langle \xi _1,\xi _1\rangle & \langle \xi _1,\xi _2\rangle & \cdots & \langle \xi _1,\xi _m\rangle \\
    \langle \xi _2,\xi _1\rangle & \langle \xi _2,\xi _2\rangle & \cdots & \langle \xi _2,\xi _m\rangle \\
    \hdotsfor{4}\\
    \langle \xi _m,\xi _1\rangle & \langle \xi _m,\xi _2\rangle & \cdots & \langle \xi _m,\xi _m\rangle \\
  \end{array}\right)
$$

$$
\bar{\Delta}=\det\left(\begin{array}{cccc}
    \langle \overline{\xi}_1,\overline{\xi}_1\rangle & \langle \overline{\xi}_1,\overline{\xi}_2\rangle & \cdots & \langle \overline{\xi}_1,\overline{\xi}_m\rangle \\
    \langle \overline{\xi}_2,\overline{\xi}_1\rangle & \langle \overline{\xi}_2,\overline{\xi}_2\rangle & \cdots & \langle \overline{\xi}_2,\overline{\xi}_m\rangle \\
    \hdotsfor{4}\\
    \langle \overline{\xi}_m,\overline{\xi}_1\rangle & \langle \overline{\xi}_m,\overline{\xi}_2\rangle & \cdots & \langle \overline{\xi}_m,\overline{\xi}_m\rangle \\
  \end{array}\right)
$$
where $$\langle \xi _i,\xi _j\rangle=x_{i1}x_{j1}+x_{i2}x_{j2}+\cdots+x_{im}x_{jm}, i,j=1,2,\cdots,m.$$
we define a function $f$ on $M$ by
$$f\doteq\frac{\Delta}{\bar{\Delta}}$$

\begin{lemma}
The funtion $f$ is defined on the Grassmann manifold $G_{n,m}(\mathbb{R})$.
\end{lemma}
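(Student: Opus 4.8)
The plan is to show that $f$, which \emph{a priori} is only a function on the matrix space $M$, is constant on the fibres of $\pi: M \to G_{n,m}(\mathbb{R})$, so that it descends to a well-defined function on the quotient $M/GL(m,\mathbb{R})$. Two matrices in $M$ have the same row space — that is, determine the same point of $G_{n,m}(\mathbb{R})$ — exactly when they differ by left multiplication by some $A \in GL(m,\mathbb{R})$. Hence the statement to be proved is precisely that $f(AX) = f(X)$ for every $X \in M$ and every $A \in GL(m,\mathbb{R})$.

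First I would recast $\Delta$ and $\bar{\Delta}$ in matrix form. Writing $X = (x_{\alpha k})$ for the $m \times n$ matrix with rows $\xi_1, \ldots, \xi_m$, the Gram matrix $(\langle \xi_i, \xi_j\rangle)$ equals $X X^{T}$, so $\Delta = \det(X X^{T})$. Introducing the diagonal matrix $\Lambda = \mathrm{diag}(\lambda_1, \ldots, \lambda_n)$, the rows $\bar{\xi}_\alpha$ are exactly the rows of $X\Lambda^{-1}$, whence $(\langle \bar{\xi}_i, \bar{\xi}_j\rangle) = X\Lambda^{-2}X^{T}$ and $\bar{\Delta} = \det(X\Lambda^{-2}X^{T})$. (Here I read $\langle\,\cdot\,,\,\cdot\,\rangle$ as the standard inner product of $\mathbb{R}^{n}$; the displayed formula for $\langle\xi_i,\xi_j\rangle$ appears to truncate its sum at $m$ rather than $n$, which I take to be a typographical slip, since the full sum is what is needed below.)

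Before proving invariance I would verify that $f$ is genuinely defined on $M$, i.e. that the denominator never vanishes. Since $\Lambda$ is invertible and $X$ has rank $m$, the matrix $X\Lambda^{-1}$ also has rank $m$, so its Gram matrix $X\Lambda^{-2}X^{T} = (X\Lambda^{-1})(X\Lambda^{-1})^{T}$ is symmetric positive definite; hence $\bar{\Delta} > 0$, and the same reasoning gives $\Delta > 0$. With this in hand the invariance is immediate: under $X \mapsto AX$ one has $\Delta \mapsto \det(AXX^{T}A^{T}) = (\det A)^{2}\det(XX^{T}) = (\det A)^{2}\Delta$, and likewise $\bar{\Delta} \mapsto (\det A)^{2}\bar{\Delta}$. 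The common factor $(\det A)^{2}$ cancels in the ratio, so $f(AX) = \Delta/\bar{\Delta} = f(X)$, and therefore $f$ descends to a well-defined function on $G_{n,m}(\mathbb{R})$.

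There is no serious obstacle here: the entire content is the observation that $\Delta$ and $\bar{\Delta}$ are Gram determinants, each of which scales by the factor $(\det A)^{2}$ under a change of basis of the subspace, so that the factors cancel in the quotient. Notice that this cancellation holds no matter where the defining sum of $\langle\,\cdot\,,\,\cdot\,\rangle$ stops; the only place the choice of inner product actually matters is in guaranteeing $\bar{\Delta} \neq 0$, which is why the sum must run over all $n$ coordinates. Thus the one point deserving genuine care is the positivity argument establishing that $f$ is defined on all of $M$ in the first place.
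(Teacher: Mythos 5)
Your proof is correct and takes essentially the same route as the paper: both arguments write the two Gram matrices as $XX^{T}$ and $X\Lambda^{-2}X^{T}$ and observe that each determinant acquires the factor $(\det A)^{2}$ under $X\mapsto AX$, so the ratio descends to the quotient. Your extra verification that $\bar{\Delta}>0$ (hence $f$ is defined on all of $M$) is a point the paper leaves implicit, and you are right that the displayed formula for $\langle\xi_i,\xi_j\rangle$ stopping at $m$ rather than $n$ is a typographical slip.
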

\begin{proof}
Let $A\in GL(m,\mathbb{R})$, because
$$\left(\begin{array}{cccc}
    \langle \xi _1,\xi _1\rangle & \langle \xi _1,\xi _2\rangle & \cdots & \langle \xi _1,\xi _m\rangle \\
    \langle \xi _2,\xi _1\rangle & \langle \xi _2,\xi _2\rangle & \cdots & \langle \xi _2,\xi _m\rangle \\
    \hdotsfor{4}\\
    \langle \xi _m,\xi _1\rangle & \langle \xi _m,\xi _2\rangle & \cdots & \langle \xi _m,\xi _m\rangle \\
  \end{array}\right)=\left(
  \begin{array}{cccc}
    x_{11} & x_{12} & \cdots & x_{1n} \\
    x_{21} & x_{22} & \cdots & x_{2n} \\
    \hdotsfor{4}\\
    x_{m1} & x_{m2} & \cdots & x_{mn} \\
  \end{array}\right)
\left(
  \begin{array}{cccc}
    x_{11} & x_{12} & \cdots & x_{1n} \\
    x_{21} & x_{22} & \cdots & x_{2n} \\
    \hdotsfor{4}\\
    x_{m1} & x_{m2} & \cdots & x_{mn} \\
  \end{array}
\right)^{T}$$
and
$$\left(\begin{array}{cccc}
    \langle \overline{\xi}_1,\overline{\xi}_1\rangle & \langle \overline{\xi}_1,\overline{\xi}_2\rangle & \cdots & \langle \overline{\xi}_1,\overline{\xi}_m\rangle \\
    \langle \overline{\xi}_2,\overline{\xi}_1\rangle & \langle \overline{\xi}_2,\overline{\xi}_2\rangle & \cdots & \langle \overline{\xi}_2,\overline{\xi}_m\rangle \\
    \hdotsfor{4}\\
    \langle \overline{\xi}_m,\overline{\xi}_1\rangle & \langle \overline{\xi}_m,\overline{\xi}_2\rangle & \cdots & \langle \overline{\xi}_m,\overline{\xi}_m\rangle \\
  \end{array}\right)$$
$$=\left(
  \begin{array}{cccc}
    x_{11} & x_{12} & \cdots & x_{1n} \\
    x_{21} & x_{22} & \cdots & x_{2n} \\
    \hdotsfor{4}\\
    x_{m1} & x_{m2} & \cdots & x_{mn} \\
  \end{array}
\right)
\begin{pmatrix}
\frac{1}{\lambda^{2}_{1}}\\
& \frac{1}{\lambda^{2}_{2}} & & \text{{\huge{0}}}\\
& & \ddots \\
& & & \frac{1}{\lambda^{2}_{m-1}} &\\
& \text{{\huge{0}}} & & &\frac{1}{\lambda^{2}_{m}}
\end{pmatrix}
\left(
  \begin{array}{cccc}
    x_{11} & x_{12} & \cdots & x_{1n} \\
    x_{21} & x_{22} & \cdots & x_{2n} \\
    \hdotsfor{4}\\
    x_{m1} & x_{m2} & \cdots & x_{mn} \\
  \end{array}
\right)^{T}$$

where $\left(
  \begin{array}{cccc}
    x_{11} & x_{12} & \cdots & x_{1n} \\
    x_{21} & x_{22} & \cdots & x_{2n} \\
    \hdotsfor{4}\\
    x_{m1} & x_{m2} & \cdots & x_{mn} \\
  \end{array}
\right)^{T}$ is the transposed matrix of $\left(
  \begin{array}{cccc}
    x_{11} & x_{12} & \cdots & x_{1n} \\
    x_{21} & x_{22} & \cdots & x_{2n} \\
    \hdotsfor{4}\\
    x_{m1} & x_{m2} & \cdots & x_{mn} \\
  \end{array}
\right)$.

By computation we have
$$\left(\begin{array}{cccc}
    \langle A\xi_1,A\xi_1\rangle & \langle A\xi_1,A\xi_2\rangle & \cdots & \langle A\xi_1,A\xi_m\rangle \\
    \langle A\xi_2,A\xi_1\rangle & \langle A\xi_2,A\xi_2\rangle & \cdots & \langle A\xi_2,A\xi_m\rangle \\
    \hdotsfor{4}\\
    \langle A\xi_m,A\xi_1\rangle & \langle A\xi_m,A\xi_2\rangle & \cdots & \langle A\xi_m,A\xi_m\rangle \\
  \end{array}\right)=A\left(\begin{array}{cccc}
    \langle \xi _1,\xi _1\rangle & \langle \xi _1,\xi _2\rangle & \cdots & \langle \xi _1,\xi _m\rangle \\
    \langle \xi _2,\xi _1\rangle & \langle \xi _2,\xi _2\rangle & \cdots & \langle \xi _2,\xi _m\rangle \\
    \hdotsfor{4}\\
    \langle \xi _m,\xi _1\rangle & \langle \xi _m,\xi _2\rangle & \cdots & \langle \xi _m,\xi _m\rangle \\
  \end{array}\right)A^{T}$$
and
$$\left(\begin{array}{cccc}
    \langle A\overline{\xi}_1,A\overline{\xi}_1\rangle & \langle A\overline{\xi}_1,A\overline{\xi}_2\rangle & \cdots & \langle A\overline{\xi}_1,A\overline{\xi}_m\rangle \\
    \langle A\overline{\xi}_2,A\overline{\xi}_1\rangle & \langle A\overline{\xi}_2,A\overline{\xi}_2\rangle & \cdots & \langle A\overline{\xi}_2,A\overline{\xi}_m\rangle \\
    \hdotsfor{4}\\
    \langle A\overline{\xi}_m,A\overline{\xi}_1\rangle & \langle A\overline{\xi}_m,A\overline{\xi}_2\rangle & \cdots & \langle A\overline{\xi}_m,A\overline{\xi}_m\rangle \\
  \end{array}\right)=A\left(\begin{array}{cccc}
    \langle \overline{\xi}_1,\overline{\xi}_1\rangle & \langle \overline{\xi}_1,\overline{\xi}_2\rangle & \cdots & \langle \overline{\xi}_1,\overline{\xi}_m\rangle \\
    \langle \overline{\xi}_2,\overline{\xi}_1\rangle & \langle \overline{\xi}_2,\overline{\xi}_2\rangle & \cdots & \langle \overline{\xi}_2,\overline{\xi}_m\rangle \\
    \hdotsfor{4}\\
    \langle \overline{\xi}_m,\overline{\xi}_1\rangle & \langle \overline{\xi}_m,\overline{\xi}_2\rangle & \cdots & \langle \overline{\xi}_m,\overline{\xi}_m\rangle \\
  \end{array}\right)A^{T}$$
where $A^{T}$ is the transposed matrix of $A$. So we have the function $\frac{\Delta}{\bar{\Delta}}$ is invariant under the natural left action of the group $GL(m,\mathbb{R})$, then we get $f$ is defined on the Grassmann manifold $G_{n,m}(\mathbb{R})$.

\end{proof}

\begin{lemma}
The funtion $f$ is the Morse function on the Grassmann manifold $G_{n,m}(\mathbb{R})$, has and only has one critical point $O_{i_{1}i_{2}\cdots i_{m}}=\widetilde{\phi}^{-1}_{i_{1}i_{2}\cdots i_{m}}(\underbrace{0,0,\cdots,0}_{m(n-m)})$ on each $\widetilde{U}_{i_{1}i_{2}\cdots i_{m}}$, $1\leq i_{1}<i_{2}<\cdots<i_{m}\leq n$.
\end{lemma}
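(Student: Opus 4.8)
The plan is to locate the critical points by a coordinate-free first-variation argument and then to verify nondegeneracy in each local chart. First I would record that, writing $D=\operatorname{diag}(\lambda_1^{-2},\dots,\lambda_n^{-2})$, a plane $V$ represented by an $m\times n$ matrix $X$ whose rows span $V$ has $\Delta=\det(XX^{T})$ and $\bar\Delta=\det(XDX^{T})$, so that $f(V)=\det(XX^{T})/\det(XDX^{T})$; since $D$ is positive definite and $X$ has rank $m$, the denominator never vanishes and $f$ is smooth. By Lemma 1 I may evaluate $f$ on any representative, and choosing an orthonormal frame ($XX^{T}=I_m$) gives $f(V)=1/\det(XDX^{T})$. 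Thus $f=1/g$ with $g(V):=\det(XDX^{T})>0$, so $f$ and $g$ share their critical points, and at such a point nondegeneracy of one is equivalent to that of the other.

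To find the critical points I would take a curve $X(t)$ of orthonormal frames through $X$ and differentiate. From $XX^{T}=I_m$ the matrix $\Omega:=\dot X X^{T}$ is antisymmetric, and writing $\dot X=\Omega X+CX_\perp$ (with $X_\perp$ an orthonormal frame of $V^{\perp}$ and $C$ an arbitrary $m\times(n-m)$ matrix) separates the infinitesimal rotation of the frame from the genuine motion of the plane. Substituting into $\dot g=g\,\operatorname{tr}\!\big[(XDX^{T})^{-1}(\dot X DX^{T}+XD\dot X^{T})\big]$ and using the cyclic invariance of the trace, the $\Omega$-terms cancel (confirming that $g$ is well defined on the Grassmannian) and the remainder is $2g\,\operatorname{tr}\!\big[(XDX^{T})^{-1}CX_\perp DX^{T}\big]$. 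As $C$ is arbitrary, $V$ is critical if and only if $XDX_\perp^{T}=0$, i.e. $\langle Du_\alpha,w_\beta\rangle=0$ for every row $u_\alpha$ of $X$ and $w_\beta$ of $X_\perp$, which says exactly that $DV\subseteq V$: the critical points of $f$ are precisely the $D$-invariant $m$-planes.

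Because $0<\lambda_1<\cdots<\lambda_n$, the eigenvalues $\lambda_k^{-2}$ of $D$ are pairwise distinct, so the $D$-invariant $m$-planes are exactly the coordinate planes $\operatorname{span}(e_{i_1},\dots,e_{i_m})$, $1\le i_1<\cdots<i_m\le n$. Such a plane is represented by the matrix with $1$ in each position $(\alpha,i_\alpha)$ and $0$ elsewhere; its column block on $(i_1,\dots,i_m)$ is $I_m$ while all free coordinates vanish, so it is the point $O_{i_1\cdots i_m}=\widetilde\phi^{-1}_{i_1\cdots i_m}(0,\dots,0)$. Moreover every other $m\times m$ column block of this matrix is singular, so the plane lies in $\widetilde U_{l_1\cdots l_m}$ only when $\{l_1,\dots,l_m\}=\{i_1,\dots,i_m\}$. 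Hence each chart $\widetilde U_{i_1\cdots i_m}$ contains exactly one critical point, its origin, giving $\binom{n}{m}$ critical points in all.

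Finally I would check nondegeneracy in the chart $\widetilde U_{i_1\cdots i_m}$, where the representative is $[\,I_m\mid Y\,]$ (suitably interleaved) with free coordinates $Y=(y_{\alpha s})$ on the complementary columns $j_1<\cdots<j_{n-m}$, so that $f=\det(I_m+YY^{T})/\det(P+YQY^{T})$ with $P=\operatorname{diag}(\lambda_{i_\alpha}^{-2})$ and $Q=\operatorname{diag}(\lambda_{j_s}^{-2})$. Expanding to second order about $Y=0$ using $\det(I+E)=1+\operatorname{tr}E+O(E^{2})$ gives
$$f=\Big(\prod_{\alpha}\lambda_{i_\alpha}^{2}\Big)\Big[\,1+\sum_{\alpha,s}\Big(1-\tfrac{\lambda_{i_\alpha}^{2}}{\lambda_{j_s}^{2}}\Big)y_{\alpha s}^{2}\,\Big]+O(|Y|^{3}),$$
so the Hessian at the origin is diagonal in the $y_{\alpha s}$ with entries proportional to $1-\lambda_{i_\alpha}^{2}/\lambda_{j_s}^{2}$. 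Since the $\lambda$'s are distinct and the index sets $\{i_\alpha\}$, $\{j_s\}$ are disjoint, each entry is nonzero, so $O_{i_1\cdots i_m}$ is nondegenerate and $f$ is Morse (with index $\#\{(\alpha,s):i_\alpha>j_s\}$). I expect the main obstacle to be the bookkeeping of the first variation — justifying the rotation/motion splitting $\dot X=\Omega X+CX_\perp$ and verifying the cancellation of the $\Omega$-terms; once the condition $DV\subseteq V$ is isolated, uniqueness in each chart is immediate from the distinctness of the eigenvalues and nondegeneracy is a one-line Taylor computation.
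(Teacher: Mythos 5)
Your proposal is correct, but it reaches the conclusion by a genuinely different route than the paper. The paper works entirely inside the chart $\widetilde U_{i_1\cdots i_m}$: it expands $\partial\Delta/\partial x_{ij}$ and $\partial\bar\Delta/\partial x_{ij}$ by cofactors, checks that the origin annihilates every partial derivative, and then rules out a second critical point by a contradiction argument --- assuming some column $x_{\cdot j}\neq 0$, the $m$ equations $\partial f/\partial x_{ij}=0$ force a matrix $B$ to be singular, and $\det B=0$ is massaged into $\det\bigl(X\,\mathrm{diag}(\lambda_l^{-2}-\lambda_j^{-2})X^T\bigr)=0$, contradicting the distinctness of the $\lambda$'s; nondegeneracy and the index are deferred to the next lemma via explicit second derivatives. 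You instead normalize to orthonormal frames so that $f=1/\det(XDX^T)$, compute the first variation on the Stiefel manifold with the splitting $\dot X=\Omega X+CX_\perp$, and identify the critical set invariantly as the $D$-invariant $m$-planes, which for distinct eigenvalues are exactly the $\binom{n}{m}$ coordinate planes, one per chart; nondegeneracy then falls out of a second-order expansion $\det(I+E)=1+\mathrm{tr}\,E+O(E^2)$ of both determinants, reproducing the paper's Hessian $2\prod_\alpha\lambda_{i_\alpha}^2\,\mathrm{diag}(1-\lambda_{i_\alpha}^2/\lambda_{j_s}^2)$ and index $\sum_\alpha(i_\alpha-\alpha)$. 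Your version is shorter and more conceptual, sidesteps the paper's most delicate step (the claim that the vanishing determinant ``is independent of the value of $x_{kl}$,'' which as written needs more justification), and in fact proves more than the paper's Lemma~2 by including nondegeneracy; what it gives up is the explicit coordinate formula for $\nabla f$ in the chart, which the paper reuses heavily in its later analysis of the gradient flow. The only point to tighten is the one you flag yourself: justify that every tangent direction of the Grassmannian is realized by a curve of orthonormal frames with prescribed $C$ (e.g.\ via the identification $T_VG_{n,m}\cong\mathrm{Hom}(V,V^\perp)$), after which the cancellation of the $\Omega$-terms is the trace identity $\mathrm{tr}[S^{-1}(\Omega S-S\Omega)]=0$.
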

\begin{proof}
By the definition $$f=\frac{\Delta}{\bar{\Delta}}$$
so on the local coordinate systems $\widetilde{U}_{i_{1}i_{2}\cdots i_{m}}$ we have
\begin{equation}\frac{\partial f}{\partial
x_{ij}}=\frac{\frac{\partial \Delta}{\partial
x_{ij}}\cdot\bar{\Delta}-\Delta\cdot\frac{\partial \bar{\Delta}}{\partial
x_{ij}}}{\bar{\Delta}\cdot\bar{\Delta}}.
\end{equation}
where $i=1,2,\cdots,m$; $j=k_{s}\neq i_{1},i_{2},\cdots,i_{m}$.
By computation we have
$$\frac{\partial \Delta}{\partial
x_{ij}}=2\det\left(\begin{array}{cccccccc}
    \langle \xi _1,\xi _1\rangle & \langle \xi _1,\xi _2\rangle & \cdots & \langle \xi _1,\xi _{i-1}\rangle & x_{1j} & \langle \xi _1,\xi _{i+1}\rangle & \cdots & \langle \xi _1,\xi _m\rangle\\
    \langle \xi _2,\xi _1\rangle & \langle \xi _2,\xi _2\rangle & \cdots & \langle \xi _2,\xi _{i-1}\rangle & x_{2j} & \langle \xi _2,\xi _{i+1}\rangle & \cdots & \langle \xi _2,\xi _m\rangle\\
    \hdotsfor{8}\\
    \langle \xi _m,\xi _1\rangle & \langle \xi _m,\xi _2\rangle & \cdots & \langle \xi _m,\xi _{i-1}\rangle & x_{mj} & \langle \xi _m,\xi _{i+1}\rangle & \cdots & \langle \xi _m,\xi _m\rangle\\
  \end{array}\right)$$

$$\frac{\partial \bar{\Delta}}{\partial
x_{ij}}=2\det\left(\begin{array}{cccccccc}
    \langle \overline{\xi} _1,\overline{\xi} _1\rangle & \langle \overline{\xi} _1,\overline{\xi} _2\rangle & \cdots & \langle \overline{\xi} _1,\overline{\xi} _{i-1}\rangle & \frac{x_{1j}}{\lambda^{2}_{j}} & \langle \overline{\xi} _1,\overline{\xi} _{i+1}\rangle & \cdots & \langle \overline{\xi} _1,\overline{\xi} _m\rangle\\
    \langle \overline{\xi} _2,\overline{\xi} _1\rangle & \langle \overline{\xi} _2,\overline{\xi} _2\rangle & \cdots & \langle \overline{\xi} _2,\overline{\xi} _{i-1}\rangle & \frac{x_{2j}}{\lambda^{2}_{j}} & \langle \overline{\xi} _2,\overline{\xi} _{i+1}\rangle & \cdots & \langle \overline{\xi} _2,\overline{\xi} _m\rangle\\
    \hdotsfor{8}\\
    \langle \overline{\xi} _m,\overline{\xi} _1\rangle & \langle \overline{\xi} _m,\overline{\xi} _2\rangle & \cdots & \langle \overline{\xi} _m,\overline{\xi} _{i-1}\rangle & \frac{x_{mj}}{\lambda^{2}_{j}} & \langle \overline{\xi} _m,\overline{\xi} _{i+1}\rangle & \cdots & \langle \overline{\xi} _m,\overline{\xi} _m\rangle\\
  \end{array}\right)$$
then
$$\frac{\partial \Delta}{\partial
x_{ij}}\cdot\bar{\Delta}-\Delta\cdot\frac{\partial \bar{\Delta}}{\partial
x_{ij}}=$$
\begin{equation}2(x_{1j}A_{1i}+x_{2j}A_{2i}+\cdots+x_{mj}A_{mi})\cdot\bar{\Delta}-2(\frac{x_{1j}}{\lambda^{2}_{j}}\overline{A}_{1i}+\frac{x_{2j}}{\lambda^{2}_{j}}\overline{A}_{2i}+\cdots+\frac{x_{mj}}{\lambda^{2}_{j}}\overline{A}_{mi})\cdot\Delta\end{equation}
where $A_{ij}$ is $(i,j)$ cofactor of

$$\left(\begin{array}{cccc}
    \langle \xi _1,\xi _1\rangle & \langle \xi _1,\xi _2\rangle & \cdots & \langle \xi _1,\xi _m\rangle \\
    \langle \xi _2,\xi _1\rangle & \langle \xi _2,\xi _2\rangle & \cdots & \langle \xi _2,\xi _m\rangle \\
    \hdotsfor{4}\\
    \langle \xi _m,\xi _1\rangle & \langle \xi _m,\xi _2\rangle & \cdots & \langle \xi _m,\xi _m\rangle \\
  \end{array}\right),$$
$\overline{A}_{ij}$ is $(i,j)$ cofactor of
$$\left(\begin{array}{cccc}
    \langle \overline{\xi}_1,\overline{\xi}_1\rangle & \langle \overline{\xi}_1,\overline{\xi}_2\rangle & \cdots & \langle \overline{\xi}_1,\overline{\xi}_m\rangle \\
    \langle \overline{\xi}_2,\overline{\xi}_1\rangle & \langle \overline{\xi}_2,\overline{\xi}_2\rangle & \cdots & \langle \overline{\xi}_2,\overline{\xi}_m\rangle \\
    \hdotsfor{4}\\
    \langle \overline{\xi}_m,\overline{\xi}_1\rangle & \langle \overline{\xi}_m,\overline{\xi}_2\rangle & \cdots & \langle \overline{\xi}_m,\overline{\xi}_m\rangle \\
  \end{array}\right)$$
so we have $O_{i_{1}i_{2}\cdots i_{m}}=\widetilde{\phi}^{-1}_{i_{1}i_{2}\cdots i_{m}}(\underbrace{0,0,\cdots,0}_{m(n-m)})$ is a critical point of $f$. In the following we can proof that $O_{i_{1}i_{2}\cdots i_{m}}$ is the only one critical point on each $\widetilde{U}_{i_{1}i_{2}\cdots i_{m}}$.

If there exists another critical point $p\neq O_{i_{1}i_{2}\cdots i_{m}}$, then for some $i,j$ we have $x_{ij}\neq0$, and according to the definition of critical point $\frac{\partial f}{\partial x_{ij}}(p)=0$, then $\frac{\partial \Delta}{\partial
x_{ij}}\cdot\bar{\Delta}-\Delta\cdot\frac{\partial \bar{\Delta}}{\partial
x_{ij}}=0$, so
$$(x_{1j}A_{1i}+x_{2j}A_{2i}+\cdots+x_{mj}A_{mi})\cdot\bar{\Delta}-(\frac{x_{1j}}{\lambda^{2}_{j}}\overline{A}_{1i}+\frac{x_{2j}}{\lambda^{2}_{j}}\overline{A}_{2i}+\cdots+\frac{x_{mj}}{\lambda^{2}_{j}}\overline{A}_{mi})\cdot\Delta=0$$
If we get $i$ from 1 to $m$, by them we can get a linear system of equations, because $x_{ij}\neq0$ so we get
$$\det\left(
  \begin{array}{cccc}
    A_{11}\cdot\bar{\Delta}-\frac{1}{\lambda^{2}_{j}}\overline{A}_{11}\cdot\Delta & A_{21}\cdot\bar{\Delta}-\frac{1}{\lambda^{2}_{j}}\overline{A}_{21}\cdot\Delta & \cdots & A_{m1}\cdot\bar{\Delta}-\frac{1}{\lambda^{2}_{j}}\overline{A}_{m1}\cdot\Delta \\
    A_{12}\cdot\bar{\Delta}-\frac{1}{\lambda^{2}_{j}}\overline{A}_{12}\cdot\Delta & A_{22}\cdot\bar{\Delta}-\frac{1}{\lambda^{2}_{j}}\overline{A}_{22}\cdot\Delta & \cdots & A_{m2}\cdot\bar{\Delta}-\frac{1}{\lambda^{2}_{j}}\overline{A}_{m2}\cdot\Delta \\
    \hdotsfor{4}\\
    A_{1m}\cdot\bar{\Delta}-\frac{1}{\lambda^{2}_{j}}\overline{A}_{1m}\cdot\Delta & A_{2m}\cdot\bar{\Delta}-\frac{1}{\lambda^{2}_{j}}\overline{A}_{2m}\cdot\Delta & \cdots & A_{mm}\cdot\bar{\Delta}-\frac{1}{\lambda^{2}_{j}}\overline{A}_{mm}\cdot\Delta \\
  \end{array}\right)=0$$
we denote the above matrix by $B$ , because
$$\left(\begin{array}{cccc}
    \langle \xi _1,\xi _1\rangle & \langle \xi _1,\xi _2\rangle & \cdots & \langle \xi _1,\xi _m\rangle \\
    \langle \xi _2,\xi _1\rangle & \langle \xi _2,\xi _2\rangle & \cdots & \langle \xi _2,\xi _m\rangle \\
    \hdotsfor{4}\\
    \langle \xi _m,\xi _1\rangle & \langle \xi _m,\xi _2\rangle & \cdots & \langle \xi _m,\xi _m\rangle \\
  \end{array}\right)B$$
$$=\left(\begin{array}{cccc}
    \langle\overline{\xi} _1,\overline{\xi} _1\rangle-\frac{\langle \xi _1,\xi _1\rangle}{\lambda^{2}_{j}} & \langle\overline{\xi} _1,\overline{\xi} _2\rangle-\frac{\langle \xi _1,\xi _2\rangle}{\lambda^{2}_{j}} & \cdots & \langle\overline{\xi} _1,\overline{\xi} _m\rangle-\frac{\langle \xi _1,\xi _m\rangle}{\lambda^{2}_{j}} \\
    \langle\overline{\xi}_2,\overline{\xi}_1\rangle-\frac{\langle \xi _2,\xi _1\rangle}{\lambda^{2}_{j}} & \langle\overline{\xi} _2,\overline{\xi} _2\rangle-\frac{\langle \xi _2,\xi _2\rangle}{\lambda^{2}_{j}} & \cdots & \langle\overline{\xi} _2,\overline{\xi} _m\rangle-\frac{\langle \xi _2,\xi _m\rangle}{\lambda^{2}_{j}} \\
    \hdotsfor{4}\\
    \langle\overline{\xi} _m,\overline{\xi} _1\rangle-\frac{\langle \xi _m,\xi _1\rangle}{\lambda^{2}_{j}} & \langle\overline{\xi} _m,\overline{\xi} _2\rangle-\frac{\langle \xi _m,\xi _2\rangle}{\lambda^{2}_{j}} & \cdots & \langle\overline{\xi} _m,\overline{\xi} _m\rangle-\frac{\langle \xi _m,\xi _m\rangle}{\lambda^{2}_{j}} \\
  \end{array}\right)C$$
where
$$C= \left(
  \begin{array}{cccc}
    \overline{A}_{11}\cdot\Delta & \overline{A}_{21}\cdot\Delta & \cdots & \overline{A}_{m1}\cdot\Delta \\
    \overline{A}_{12}\cdot\Delta & \overline{A}_{22}\cdot\Delta & \cdots & \overline{A}_{m2}\cdot\Delta \\
    \hdotsfor{4}\\
    \overline{A}_{1m}\cdot\Delta & \overline{A}_{2m}\cdot\Delta & \cdots & \overline{A}_{mm}\cdot\Delta \\
  \end{array}\right)=\Delta\cdot\left(
  \begin{array}{cccc}
    \overline{A}_{11} & \overline{A}_{21} & \cdots & \overline{A}_{m1} \\
    \overline{A}_{12} & \overline{A}_{22} & \cdots & \overline{A}_{m2} \\
    \hdotsfor{4}\\
    \overline{A}_{1m} & \overline{A}_{2m} & \cdots & \overline{A}_{mm} \\
  \end{array}\right).$$
by computation we have
$$\left(\begin{array}{cccc}
    \langle\overline{\xi} _1,\overline{\xi} _1\rangle-\frac{\langle \xi _1,\xi _1\rangle}{\lambda^{2}_{j}} & \langle\overline{\xi} _1,\overline{\xi} _2\rangle-\frac{\langle \xi _1,\xi _2\rangle}{\lambda^{2}_{j}} & \cdots & \langle\overline{\xi} _1,\overline{\xi} _m\rangle-\frac{\langle \xi _1,\xi _m\rangle}{\lambda^{2}_{j}} \\
    \langle\overline{\xi}_2,\overline{\xi}_1\rangle-\frac{\langle \xi _2,\xi _1\rangle}{\lambda^{2}_{j}} & \langle\overline{\xi} _2,\overline{\xi} _2\rangle-\frac{\langle \xi _2,\xi _2\rangle}{\lambda^{2}_{j}} & \cdots & \langle\overline{\xi} _2,\overline{\xi} _m\rangle-\frac{\langle \xi _2,\xi _m\rangle}{\lambda^{2}_{j}} \\
    \hdotsfor{4}\\
    \langle\overline{\xi} _m,\overline{\xi} _1\rangle-\frac{\langle \xi _m,\xi _1\rangle}{\lambda^{2}_{j}} & \langle\overline{\xi} _m,\overline{\xi} _2\rangle-\frac{\langle \xi _m,\xi _2\rangle}{\lambda^{2}_{j}} & \cdots & \langle\overline{\xi} _m,\overline{\xi} _m\rangle-\frac{\langle \xi _m,\xi _m\rangle}{\lambda^{2}_{j}} \\
  \end{array}\right)$$

$$=X
\begin{pmatrix}
\frac{1}{\lambda^{2}_{1}}-\frac{1}{\lambda^{2}_{j}}\\
& \frac{1}{\lambda^{2}_{2}}-\frac{1}{\lambda^{2}_{j}} & & & & \text{{\huge{0}}}\\
& & \ddots \\
& & & \frac{1}{\lambda^{2}_{j-1}}-\frac{1}{\lambda^{2}_{j}} & & & &\\
& & & & \frac{1}{\lambda^{2}_{j+1}}-\frac{1}{\lambda^{2}_{j}} & & &\\
& & & & & \ddots \\
& & \text{{\huge{0}}} & & & & \frac{1}{\lambda^{2}_{n-1}}-\frac{1}{\lambda^{2}_{j}}\\
& & & & & & &\frac{1}{\lambda^{2}_{n}}-\frac{1}{\lambda^{2}_{j}}
\end{pmatrix}X^{T}
$$
where
$$X=\begin{pmatrix}
    x_{11} & x_{12} & \cdots & x_{1(j-1)}& x_{1(j+1)} & \cdots & x_{1n} \\
    x_{21} & x_{22} & \cdots & x_{2(j-1)}& x_{2(j+1)} & \cdots & x_{2n} \\
    \hdotsfor{7}\\
    x_{m1} & x_{m2} & \cdots & x_{m(j-1)}& x_{m(j+1)} & \cdots & x_{mn} \\
  \end{pmatrix}$$
$X^{T}$ is the transposed matrix of $X$.
Because $\det B=0$, so we have

$$\det\left(\begin{array}{cccc}
    \langle\overline{\xi} _1,\overline{\xi} _1\rangle-\frac{\langle \xi _1,\xi _1\rangle}{\lambda^{2}_{j}} & \langle\overline{\xi} _1,\overline{\xi} _2\rangle-\frac{\langle \xi _1,\xi _2\rangle}{\lambda^{2}_{j}} & \cdots & \langle\overline{\xi} _1,\overline{\xi} _m\rangle-\frac{\langle \xi _1,\xi _m\rangle}{\lambda^{2}_{j}} \\
    \langle\overline{\xi}_2,\overline{\xi}_1\rangle-\frac{\langle \xi _2,\xi _1\rangle}{\lambda^{2}_{j}} & \langle\overline{\xi} _2,\overline{\xi} _2\rangle-\frac{\langle \xi _2,\xi _2\rangle}{\lambda^{2}_{j}} & \cdots & \langle\overline{\xi} _2,\overline{\xi} _m\rangle-\frac{\langle \xi _2,\xi _m\rangle}{\lambda^{2}_{j}} \\
    \hdotsfor{4}\\
    \langle\overline{\xi} _m,\overline{\xi} _1\rangle-\frac{\langle \xi _m,\xi _1\rangle}{\lambda^{2}_{j}} & \langle\overline{\xi} _m,\overline{\xi} _2\rangle-\frac{\langle \xi _m,\xi _2\rangle}{\lambda^{2}_{j}} & \cdots & \langle\overline{\xi} _m,\overline{\xi} _m\rangle-\frac{\langle \xi _m,\xi _m\rangle}{\lambda^{2}_{j}} \\
  \end{array}\right)=0$$
because it is independent of the value of $x_{kl}$ ($k=1,2,\cdots,m$; $l=1,2,\cdots,j-1,j+1,\cdots,n.$), so we must have
$\frac{1}{\lambda^{2}_{l}}-\frac{1}{\lambda^{2}_{j}}=0$ for some $l$, then we get $\lambda_{l}=\lambda_{j}$, which contradicts to $\lambda_{l}\neq\lambda_{j},(l\neq j)$. So we get $x_{ij}=0$, there non-exists another critical point $p\neq O_{i_{1}i_{2}\cdots i_{m}}$ on $\widetilde{U}_{i_{1}i_{2}\cdots i_{m}}$.

\end{proof}

\begin{lemma}
The critical point $O_{i_{1}i_{2}\cdots i_{m}}$ on each $\widetilde{U}_{i_{1}i_{2}\cdots i_{m}}$($1\leq i_{1}<i_{2}<\cdots<i_{m}\leq n$) is non-degenerate, and $\mathrm{ind}(O_{i_{1}i_{2}\cdots i_{m}})=i_{1}+i_{2}+\cdots+i_{m}-\frac{1}{2}m(m+1).$
\end{lemma}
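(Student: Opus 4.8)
The plan is to compute the Hessian of $f$ at $O_{i_1 i_2\cdots i_m}$ explicitly in the chart coordinates $x_{\alpha k}$ ($\alpha=1,\dots,m$; $k\in J:=\{1,\dots,n\}\setminus\{i_1,\dots,i_m\}$) and to read off both the non-degeneracy and the index from its diagonal form. In these coordinates the representing matrix has its $i_\beta$-th column equal to $e_\beta$ and its $k$-th column ($k\in J$) equal to $(x_{1k},\dots,x_{mk})^{T}$, so that $x_{\alpha,i_\beta}=\delta_{\alpha\beta}$ and the point $O_{i_1\cdots i_m}$ is the origin $x_{\alpha k}=0$. Writing $X=(x_{\alpha k})$ for the $m\times(n-m)$ block of free coordinates, a direct computation of the inner products gives the clean factorizations
$$(\langle\xi_i,\xi_j\rangle)=I_m+XX^{T},\qquad (\langle\bar\xi_i,\bar\xi_j\rangle)=D+X\Lambda X^{T},$$
where $D=\mathrm{diag}(\lambda_{i_1}^{-2},\dots,\lambda_{i_m}^{-2})$ and $\Lambda=\mathrm{diag}(\lambda_k^{-2}:k\in J)$. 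Hence $\Delta=\det(I_m+XX^{T})$ and $\bar\Delta=\det D\cdot\det(I_m+D^{-1}X\Lambda X^{T})$.

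Next I would expand both determinants to second order in $X$ by means of $\det(I+A)=1+\mathrm{tr}(A)+O(|A|^2)$. Since $XX^{T}$ and $X\Lambda X^{T}$ are already quadratic in $X$, this yields
$$\Delta=1+\sum_{\alpha,k}x_{\alpha k}^2+O(|X|^4),\qquad \bar\Delta=\Big(\prod_\beta\lambda_{i_\beta}^{-2}\Big)\Big(1+\sum_{\alpha,k}\frac{\lambda_{i_\alpha}^2}{\lambda_k^2}\,x_{\alpha k}^2+O(|X|^4)\Big).$$
Forming the quotient and using $(1+u)/(1+v)=1+u-v+O(2)$ gives
$$f=\Big(\prod_\beta\lambda_{i_\beta}^2\Big)\Big(1+\sum_{\alpha,k}\big(1-\tfrac{\lambda_{i_\alpha}^2}{\lambda_k^2}\big)x_{\alpha k}^2\Big)+O(|X|^4).$$
Because $\Delta$ and $\bar\Delta$ are even in $X$, there are no odd-order terms, so this is genuinely the second-order Taylor expansion at the critical point. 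Thus the Hessian of $f$ at $O_{i_1\cdots i_m}$ is diagonal in the basis $\{x_{\alpha k}\}$, the entry along $x_{\alpha k}$ being $2\big(\prod_\beta\lambda_{i_\beta}^2\big)\big(1-\lambda_{i_\alpha}^2/\lambda_k^2\big)$.

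From this diagonal form both conclusions follow. A diagonal entry vanishes exactly when $\lambda_{i_\alpha}=\lambda_k$, i.e. $i_\alpha=k$; but $k\in J$ while $i_\alpha\notin J$, so every entry is nonzero and $O_{i_1\cdots i_m}$ is non-degenerate. The index is the number of negative entries, i.e. the number of pairs $(\alpha,k)$ with $\lambda_{i_\alpha}>\lambda_k$, equivalently $k<i_\alpha$ since the $\lambda$'s increase. For fixed $\alpha$ the number of $k\in J$ with $k<i_\alpha$ equals $(i_\alpha-1)-(\alpha-1)=i_\alpha-\alpha$, because among the $i_\alpha-1$ integers below $i_\alpha$ exactly $i_1,\dots,i_{\alpha-1}$ lie in the pivot set. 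Summing over $\alpha$ gives $\mathrm{ind}(O_{i_1\cdots i_m})=\sum_{\alpha=1}^m(i_\alpha-\alpha)=i_1+\cdots+i_m-\tfrac12 m(m+1)$, as claimed.

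The genuinely substantive step is the second-order determinant expansion leading to the diagonal Hessian; the hard part is spotting the factorizations $I_m+XX^{T}$ and $D+X\Lambda X^{T}$, after which the trace expansion and the combinatorial index count are routine. One should also double-check that the cross terms (mixed second derivatives $\partial^2 f/\partial x_{\alpha k}\partial x_{\alpha' k'}$) truly vanish, which is guaranteed here because $\mathrm{tr}(XX^{T})$ and $\mathrm{tr}(D^{-1}X\Lambda X^{T})$ contain no products $x_{\alpha k}x_{\alpha'k'}$ with $(\alpha,k)\neq(\alpha',k')$.
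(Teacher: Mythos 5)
Your proof is correct and reaches exactly the same diagonal Hessian as the paper, but by a cleaner computational route. The paper differentiates the determinants $\Delta$ and $\bar\Delta$ twice directly, using cofactor expansions, and then verifies case by case ($k<i,\ l<j$; $k\neq i,\ l=j$; $k=i,\ l=j$; etc.) that every mixed second partial vanishes at $O_{i_1\cdots i_m}$ before evaluating the diagonal entries $2\lambda_{i_1}^2\cdots\lambda_{i_m}^2\bigl(1-\lambda_{i_\alpha}^2/\lambda_k^2\bigr)$. You instead observe the factorizations $(\langle\xi_i,\xi_j\rangle)=I_m+XX^{T}$ and $(\langle\bar\xi_i,\bar\xi_j\rangle)=D+X\Lambda X^{T}$ and expand via $\det(I+A)=1+\mathrm{tr}(A)+O(|A|^2)$; since $XX^{T}$ and $D^{-1}X\Lambda X^{T}$ are already quadratic in $X$ and their traces contain only the squares $x_{\alpha k}^2$, the diagonality of the Hessian and the absence of cross terms come for free, with the evenness of $\Delta,\bar\Delta$ in $X$ disposing of odd-order terms. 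This buys you a substantially shorter argument that sidesteps the paper's most laborious step; what the paper's direct computation buys in exchange is the explicit formula (2) for $\partial f/\partial x_{ij}$ at arbitrary points of the chart, which it reuses later in Section 4 to compute the gradient flow, so the extra work there is not wasted. Your index count $\sum_\alpha(i_\alpha-\alpha)$, obtained by counting $k\in J$ with $k<i_\alpha$, agrees with the paper's $(i_1-1)+(i_2-2)+\cdots+(i_m-m)$ and supplies the small combinatorial justification the paper leaves implicit.
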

\begin{proof}
On $\widetilde{U}_{i_{1}i_{2}\cdots i_{m}}$ we have
$$\frac{\partial^{2} f}{\partial x_{kl} \partial x_{ij}}=\frac{\partial }{\partial x_{kl}}\left(\frac{\frac{\partial \Delta}{\partial
x_{ij}}\cdot\bar{\Delta}-\Delta\cdot\frac{\partial \bar{\Delta}}{\partial
x_{ij}}}{\bar{\Delta}\cdot\bar{\Delta}}\right)$$
$$=\frac{1}{\bar{\Delta}^{4}}\left(\frac{\partial }{\partial x_{kl}}\left(\frac{\partial \Delta}{\partial
x_{ij}}\cdot\bar{\Delta}-\Delta\cdot\frac{\partial \bar{\Delta}}{\partial
x_{ij}}\right)\cdot\bar{\Delta}^{2}-\left(\frac{\partial \Delta}{\partial
x_{ij}}\cdot\bar{\Delta}-\Delta\cdot\frac{\partial \bar{\Delta}}{\partial
x_{ij}}\right)\frac{\partial }{\partial x_{kl}}\bar{\Delta}^{2}\right)$$

$$=\frac{1}{\bar{\Delta}^{4}}\left( \frac{\partial^{2}\Delta}{\partial x_{kl}\partial x_{ij}}\cdot\bar{\Delta}^{3}-\frac{\partial \Delta}{\partial x_{ij}}\frac{\partial \bar{\Delta}}{\partial x_{kl}}\cdot\bar{\Delta}^{2}-\frac{\partial \Delta}{\partial x_{kl}}\frac{\partial \bar{\Delta}}{\partial x_{ij}}\cdot\bar{\Delta}^{2}-\Delta\frac{\partial^{2}\bar{\Delta}}{\partial x_{kl}\partial x_{ij}}\bar{\Delta}^{2}-2\Delta\frac{\partial \bar{\Delta}}{\partial x_{ij}}\frac{\partial \bar{\Delta}}{\partial x_{kl}}\cdot\bar{\Delta}\right)$$
When $k<i$, $l<j$, then
$$\frac{\partial^{2}\Delta}{\partial x_{kl}\partial x_{ij}}=$$

$$2\det\left(\begin{array}{ccccccccc}
    \langle \xi _1,\xi _1\rangle & \cdots & \langle \xi _1,\xi _k\rangle & \cdots & \langle \xi _1,\xi _{i-1}\rangle & x_{1j} & \langle \xi _1,\xi _{i+1}\rangle & \cdots & \langle \xi _1,\xi _m\rangle \\
    \hdotsfor{9}\\
    \langle \xi _{k-1},\xi _1\rangle & \cdots & \langle \xi _{k-1},\xi _k\rangle & \cdots & \langle \xi _{k-1},\xi _{i-1}\rangle & x_{(k-1)j} & \langle \xi _{k-1},\xi _{i+1}\rangle & \cdots & \langle \xi _{k-1},\xi _m\rangle \\
    x_{1l} & \cdots & x_{kl} & \cdots & x_{(i-1)l} & 0 & x_{(i+1)l} & \cdots & x_{ml}  \\
    \langle \xi _{k+1},\xi _1\rangle & \cdots & \langle \xi _{k+1},\xi _k\rangle & \cdots & \langle \xi _{k+1},\xi _{i-1}\rangle & x_{(k+1)j} & \langle \xi _{k+1},\xi _{i+1}\rangle & \cdots & \langle \xi _{k+1},\xi _m\rangle\\
    \hdotsfor{9}\\
    \langle \xi _m,\xi _1\rangle & \cdots & \langle \xi _{m},\xi _k\rangle & \cdots & \langle \xi _m,\xi _{i-1}\rangle & x_{mj} & \langle \xi _m,\xi _{i+1}\rangle & \cdots & \langle \xi _m,\xi _m\rangle\\
  \end{array}\right)$$

$$+2\det\left(\begin{array}{ccccccccc}
    \langle \xi _1,\xi _1\rangle & \cdots & \langle \xi _1,\xi _{k-1}\rangle & x_{1l} & \cdots & \langle \xi_1,\xi _{i-1}\rangle & x_{1j} &\cdots & \langle \xi _1,\xi _m\rangle \\
    \hdotsfor{9}\\
    \langle \xi _{k-1},\xi _1\rangle & \cdots & \langle \xi _{k-1},\xi _{k-1}\rangle & x_{(k-1)l} & \cdots & \langle \xi _{k-1},\xi _{i-1}\rangle & x_{(k-1)j} & \cdots & \langle \xi _{k-1},\xi _m\rangle \\
    \langle \xi _{k},\xi _{1}\rangle & \cdots & \langle \xi _{k},\xi _{1}\rangle & x_{kl} & \cdots & \langle \xi _{k},\xi _{i-1}\rangle & x_{kj} & \cdots & \langle \xi _{k},\xi _{m}\rangle  \\
    \langle \xi _{k+1},\xi _1\rangle & \cdots & \langle \xi _{k+1},\xi _{k-1}\rangle & x_{(k+1)l} & \cdots & \langle \xi _{k+1},\xi _{i-1}\rangle & x_{(k+1)j} & \cdots & \langle \xi _{k+1},\xi _m\rangle \\
    \hdotsfor{9} \\
    \langle \xi _m,\xi _1\rangle & \cdots & \langle \xi _{m},\xi _{k_1}\rangle & x_{ml} & \cdots & \langle \xi _m,\xi _{i-1}\rangle & x_{mj} & \cdots & \langle \xi _m,\xi _m\rangle \\
 \end{array}\right).$$
so we have $\frac{\partial^{2}\Delta}{\partial x_{kl}\partial x_{ij}}(O_{i_{1}i_{2}\cdots i_{m}})=0$. When $k>i$,$l<j$ and $k\neq i$,$l>j$ the result is the same. By the same way we can get $\frac{\partial^{2}\bar{\Delta}}{\partial x_{kl}\partial x_{ij}}(O_{i_{1}i_{2}\cdots i_{m}})=0$.

When $k\neq i$, $l=j$, then
$$\frac{\partial^{2}\Delta}{\partial x_{kj}\partial x_{ij}}=$$

$$2\det\left(\begin{array}{ccccccccc}
    \langle \xi _1,\xi _1\rangle & \cdots & \langle \xi _1,\xi _k\rangle & \cdots & \langle \xi _1,\xi _{i-1}\rangle & x_{1j} & \langle \xi _1,\xi _{i+1}\rangle & \cdots & \langle \xi _1,\xi _m\rangle \\
    \hdotsfor{9}\\
    \langle \xi _{k-1},\xi _1\rangle & \cdots & \langle \xi _{k-1},\xi _k\rangle & \cdots & \langle \xi _{k-1},\xi _{i-1}\rangle & x_{(k-1)j} & \langle \xi _{k-1},\xi _{i+1}\rangle & \cdots & \langle \xi _{k-1},\xi _m\rangle \\
    x_{1j} & \cdots & x_{kj} & \cdots & x_{(i-1)j} & 1 & x_{(i+1)j} & \cdots & x_{mj}  \\
    \langle \xi _{k+1},\xi _1\rangle & \cdots & \langle \xi _{k+1},\xi _k\rangle & \cdots & \langle \xi _{k+1},\xi _{i-1}\rangle & x_{(k+1)j} & \langle \xi _{k+1},\xi _{i+1}\rangle & \cdots & \langle \xi _{k+1},\xi _m\rangle\\
    \hdotsfor{9}\\
    \langle \xi _m,\xi _1\rangle & \cdots & \langle \xi _{m},\xi _k\rangle & \cdots & \langle \xi _m,\xi _{i-1}\rangle & x_{mj} & \langle \xi _m,\xi _{i+1}\rangle & \cdots & \langle \xi _m,\xi _m\rangle\\
  \end{array}\right)$$
$$=2A_{ki}$$
because $A_{ki}(O_{i_{1}i_{2}\cdots i_{m}})=0$, so we get $\frac{\partial^{2}\Delta}{\partial x_{kj}\partial x_{ij}}(O_{i_{1}i_{2}\cdots i_{m}})=0$, by the same way we can get $\frac{\partial^{2}\bar{\Delta}}{\partial x_{kj}\partial x_{ij}}(O_{i_{1}i_{2}\cdots i_{m}})=0$.

When $k=i$, $l=j$, then
$$\frac{\partial^{2}\Delta}{\partial x_{ij}\partial x_{ij}}=
2\det\left(\begin{array}{ccccccc}
    \langle \xi _1,\xi _1\rangle & \cdots & \langle \xi _1,\xi _{i-1}\rangle & x_{1j} & \langle \xi _1,\xi _{i+1}\rangle & \cdots & \langle \xi _1,\xi _m\rangle \\
    \hdotsfor{7}\\
    \langle \xi _{i-1},\xi _1\rangle & \cdots & \langle \xi _{i-1},\xi _{i-1}\rangle & x_{(i-1)j} & \langle \xi _{i-1},\xi _{i+1}\rangle & \cdots & \langle \xi _{i-1},\xi _m\rangle \\
    x_{1j} & \cdots & x_{(i-1)j} & 1 & x_{(i+1)j} & \cdots & x_{mj}  \\
    \langle \xi _{i+1},\xi _1\rangle & \cdots & \langle \xi _{i+1},\xi _{i-1}\rangle & x_{(i+1)j} & \langle \xi _{i+1},\xi _{i+1}\rangle & \cdots & \langle \xi _{i+1},\xi _m\rangle\\
    \hdotsfor{7}\\
    \langle \xi _m,\xi _1\rangle &\cdots & \langle \xi _m,\xi _{i-1}\rangle & x_{mj} & \langle \xi _m,\xi _{i+1}\rangle & \cdots & \langle \xi _m,\xi _m\rangle\\
  \end{array}\right).$$

$$\frac{\partial^{2}\bar{\Delta}}{\partial x_{ij}\partial x_{ij}}=
2\det\left(\begin{array}{ccccccc}
    \langle \overline{\xi} _1,\overline{\xi} _1\rangle & \cdots & \langle \overline{\xi} _1,\overline{\xi} _{i-1}\rangle & \frac{x_{1j}}{\lambda^{2}_{j}} & \langle \overline{\xi} _1,\overline{\xi} _{i+1}\rangle & \cdots & \langle \overline{\xi} _1,\overline{\xi} _m\rangle \\
    \hdotsfor{7}\\
    \langle \overline{\xi} _{i-1},\overline{\xi} _1\rangle & \cdots & \langle \overline{\xi} _{i-1},\overline{\xi} _{i-1}\rangle & \frac{x_{(i-1)j}}{\lambda^{2}_{j}} & \langle \overline{\xi} _{i-1},\overline{\xi} _{i+1}\rangle & \cdots & \langle \overline{\xi} _{i-1},\overline{\xi} _m\rangle \\
    \frac{x_{1j}}{\lambda^{2}_{j}} & \cdots & \frac{x_{(i-1)j}}{\lambda^{2}_{j}} & \frac{1}{\lambda^{2}_{j}} & \frac{x_{(i+1)j}}{\lambda^{2}_{j}} & \cdots & \frac{x_{mj}}{\lambda^{2}_{j}}  \\
    \langle \overline{\xi} _{i+1},\overline{\xi} _1\rangle & \cdots & \langle \overline{\xi} _{i+1},\overline{\xi} _{i-1}\rangle & \frac{x_{(i+1)j}}{\lambda^{2}_{j}} & \langle \overline{\xi} _{i+1},\overline{\xi} _{i+1}\rangle & \cdots & \langle \overline{\xi} _{i+1},\overline{\xi} _m\rangle\\
    \hdotsfor{7}\\
    \langle \overline{\xi} _m,\overline{\xi} _1\rangle &\cdots & \langle \overline{\xi} _m,\overline{\xi} _{i-1}\rangle & \frac{x_{mj}}{\lambda^{2}_{j}} & \langle \overline{\xi} _m,\overline{\xi} _{i+1}\rangle & \cdots & \langle \overline{\xi} _m,\overline{\xi} _m\rangle\\
  \end{array}\right).$$

so $$\frac{\partial^{2}\Delta}{\partial x_{ij}\partial x_{ij}}(O_{i_{1}i_{2}\cdots i_{m}})=2$$
$$\frac{\partial^{2}\bar{\Delta}}{\partial x_{ij}\partial x_{ij}}(O_{i_{1}i_{2}\cdots i_{m}})=2\frac{1}{\lambda^{2}_{i_{1}}\lambda^{2}_{i_{2}}\cdots\lambda^{2}_{i_{k-1}}\lambda^{2}_{j}\lambda^{2}_{i_{k+1}}\cdots\lambda^{2}_{i_{m}}}$$
then by computation we have
$$\frac{\partial^{2} f}{\partial x_{ij} \partial x_{ij}}(O_{i_{1}i_{2}\cdots i_{m}})=2\lambda^{2}_{i_{1}}\lambda^{2}_{i_{2}}\cdots\lambda^{2}_{i_{m}}\left(1-\frac{\lambda^{2}_{i_{k}}}{\lambda^{2}_{j}}\right),$$ here $j=k_{s}$.

So at the critical point $O_{i_{1}i_{2}\cdots i_{m}}$ of $f$ on each $\widetilde{U}_{i_{1}i_{2}\cdots i_{m}}$, we have the Hessian matrix $H_{O_{i_{1}i_{2}\cdots i_{m}}}(f)$
$$H_{O_{i_{1}i_{2}\cdots i_{m}}}(f)=$$
$$2\lambda^{2}_{i_{1}}\lambda^{2}_{i_{2}}\cdots\lambda^{2}_{i_{m}}\mathrm{diag}\left(1-\frac{\lambda^{2}_{i_{1}}}{\lambda^{2}_{k_{1}}},\cdots,1-\frac{\lambda^{2}_{i_{1}}}{\lambda^{2}_{k_{n-m}}},
\cdots,1-\frac{\lambda^{2}_{i_{m}}}{\lambda^{2}_{k_{1}}},\cdots,1-\frac{\lambda^{2}_{i_{m}}}{\lambda^{2}_{k_{n-m}}}\right)$$
then the critical point $O_{i_{1}i_{2}\cdots i_{m}}$ is non-degenerate. By the definition of Morse index of $f$, we have $\mathrm{ind}(O_{i_{1}i_{2}\cdots i_{m}})=(i_{1}-1)+(i_{2}-2)+\cdots+(i_{m}-m)=i_{1}+i_{2}+\cdots+i_{m}-\frac{1}{2}m(m+1)$.

\end{proof}

\section{Riemannian metric on the Grassmann manifold $G_{n,m}(\mathbb{R})$}

Lu Qi-keng introduce a Riemannian metric $g$ on the Grassmann manifold $G_{n,m}(\mathbb{R})$ in [8]. It has the following form
$$g={\rm tr}[(I+ZZ^T)^{-1}dZ(I+Z^TZ)^{-1}dZ^T]$$
where $I$ is the identity matrix,
$$Z=
\begin{pmatrix}
    x_{1k_{1}} & x_{1k_{2}} & \cdots & x_{1k_{n-m}} \\
    x_{2k_{1}} & x_{2k_{2}} & \cdots & x_{2k_{n-m}} \\
    \hdotsfor{4}\\
    x_{mk_{1}} & x_{mk_{2}} & \cdots & x_{mk_{n-m}} \\
\end{pmatrix},$$
$Z^{T}$ is the transposed matrix of $Z$.

we have
$$I+ZZ^T=\left(\begin{array}{cccc}
    \langle \xi _1,\xi _1\rangle & \langle \xi _1,\xi _2\rangle & \cdots & \langle \xi _1,\xi _m\rangle \\
    \langle \xi _2,\xi _1\rangle & \langle \xi _2,\xi _2\rangle & \cdots & \langle \xi _2,\xi _m\rangle \\
    \hdotsfor{4}\\
    \langle \xi _m,\xi _1\rangle & \langle \xi _m,\xi _2\rangle & \cdots & \langle \xi _m,\xi _m\rangle \\
  \end{array}\right)$$

$$(I+ZZ^T)^{-1}=\frac{1}{\Delta}\left(\begin{array}{cccc}
    A_{11} & A_{21} & \cdots & A_{m1} \\
    A_{12} & A_{22} & \cdots & A_{m2} \\
    \hdotsfor{4}\\
    A_{1m} & A_{2m} & \cdots & A_{mm} \\
  \end{array}\right)$$

Set $$\begin{cases}
   \eta_{k_1}=(x_{1k_1},x_{2k_1},\cdots,x_{mk_1},1,0,\cdots,0)\\
   \eta_{k_2}=(x_{1k_2},x_{2k_2},\cdots,x_{mk_2},0,1,\cdots,0)\\
   \cdots\\
   \eta_{k_{n-m}}=(x_{1k_{n-m}},x_{2k_{n-m}},\cdots,x_{mk_{n-m}},0,0,\cdots,1)\\
  \end{cases}$$
so
$$I+Z^TZ=\left(
  \begin{array}{cccc}
    \langle\eta_{k_1},\eta_{k_1}\rangle & \langle\eta_{k_1},\eta_{k_2}\rangle & \cdots & \langle\eta_{k_1},\eta_{k_{n-m}}\rangle \\
    \langle\eta_{k_2},\eta_{k_1}\rangle & \langle\eta_{k_2},\eta_{k_2}\rangle & \cdots & \langle\eta_{k_2},\eta_{k_{n-m}}\rangle \\
    \hdotsfor{4} \\
    \langle\eta_{k_{n-m}},\eta_{k_1}\rangle & \langle\eta_{k_{n-m}},\eta_{k_2}\rangle & \cdots & \langle\eta_{k_{n-m}},\eta_{k_{n-m}}\rangle \\
  \end{array}
\right)$$

$$R\doteq(I+Z^TZ)^{-1}=\frac{1}{\det(I+Z^TZ)}\left(
  \begin{array}{cccc}
    D_{11} & D_{21} & \cdots & D_{(n-m)1} \\
    D_{12} & D_{22} & \cdots & D_{(n-m)2} \\
    \hdotsfor{4} \\
    D_{1(n-m)} & D_{2(n-m)} & \cdots & D_{(n-m)(n-m)} \\
  \end{array}
\right)$$
where $D_{ij}$ is $(i,j)$ cofactor of $I+Z^TZ$.
\begin{lemma}
On $\{(\widetilde{U}_{i_{1}i_{2}\cdots i_{m}},\widetilde{\phi }_{i_{1}i_{2}\cdots i_{m}})|1\leq i_{1}<i_{2}<\cdots<i_{m}\leq n\}$, the local matrix expression of the Riemannian metric $g$ of $G_{n,m}(\mathbb{R})$ is given by
$$G=\frac{1}{\Delta}\left(\begin{array}{cccc}
    A_{11}(I+Z^TZ)^{-1} & A_{21}(I+Z^TZ)^{-1} & \cdots & A_{m1}(I+Z^TZ)^{-1} \\
    A_{12}(I+Z^TZ)^{-1} & A_{22}(I+Z^TZ)^{-1} & \cdots & A_{m2}(I+Z^TZ)^{-1} \\
    \hdotsfor{4}\\
    A_{1m}(I+Z^TZ)^{-1} & A_{2m}(I+Z^TZ)^{-1} & \cdots & A_{mm}(I+Z^TZ)^{-1} \\
  \end{array}\right)$$
\end{lemma}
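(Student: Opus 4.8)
The plan is to expand the trace defining $g$ directly in the local coordinates and to read off the Gram matrix of the resulting quadratic form in the differentials $dx_{\alpha k_s}$. Abbreviate $P=(I+ZZ^T)^{-1}$ and $R=(I+Z^TZ)^{-1}$; both are symmetric, since $I+ZZ^T$ and $I+Z^TZ$ are symmetric. On $\widetilde{U}_{i_1 i_2\cdots i_m}$ the matrix $dZ$ is the $m\times(n-m)$ array with $(\alpha,s)$ entry $dx_{\alpha k_s}$, so that $g={\rm tr}[P\,dZ\,R\,dZ^T]$, and the whole task is to convert this trace into the stated block matrix.

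First I would expand the trace entrywise. Multiplying out the four factors gives
$$g=\sum_{\alpha,\beta,s,t}P_{\alpha\beta}R_{st}\,dx_{\beta k_s}\,dx_{\alpha k_t},$$
so the coefficient coupling $dx_{\beta k_s}$ with $dx_{\alpha k_t}$ is $P_{\alpha\beta}R_{st}$, which by the symmetry of $P$ equals $P_{\beta\alpha}R_{st}$. Under the coordinate order fixed by $\widetilde{\phi}_{i_1 i_2\cdots i_m}$ — the position $(\alpha-1)(n-m)+s$ is attached to $x_{\alpha k_s}$, so $\alpha$ labels blocks and $s$ labels entries inside a block — this says exactly that the $(\beta,\alpha)$ block of the Gram matrix is $P_{\beta\alpha}R$. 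Symmetry of $P$ and $R$ also renders the whole matrix symmetric, as a metric must be, which is a useful consistency check.

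Finally I would substitute the adjugate expression already recorded above, namely $(I+ZZ^T)^{-1}=\frac{1}{\Delta}(A_{ji})$, that is $P_{\beta\alpha}=\frac{1}{\Delta}A_{\alpha\beta}$. The $(\beta,\alpha)$ block then reads $\frac{1}{\Delta}A_{\alpha\beta}(I+Z^TZ)^{-1}$, and pulling the common factor $\frac{1}{\Delta}$ out in front reproduces precisely the displayed matrix $G$. I expect the only genuine obstacle to be the index bookkeeping: one must line up the stacking convention of $\widetilde{\phi}_{i_1 i_2\cdots i_m}$ (first index varying slowest, second index fastest) with the row and column order of the blocks, and keep track of the transpose implicit in $dZ^T$. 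Once the ordering and the symmetry of $P$ and $R$ are in place, the identification is a routine calculation with no analytic content.
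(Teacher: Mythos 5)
Your proposal is correct and follows essentially the same route as the paper: both expand the trace ${\rm tr}[(I+ZZ^T)^{-1}dZ(I+Z^TZ)^{-1}dZ^T]$ into a quadratic form in the differentials $dx_{\alpha k_s}$ and read off the block Gram matrix, substituting the cofactor formula $(I+ZZ^T)^{-1}=\frac{1}{\Delta}(A_{ji})$. The paper carries this out in block-matrix notation with the rows $dX_\alpha$ of $dZ$, while you do it entrywise with explicit indices, but the computation is the same.
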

\begin{proof}
By definition
$$g={\rm tr}[(I+ZZ^T)^{-1}dZ(I+Z^TZ)^{-1}dZ^T]$$
$$={\rm tr}\left[\frac{1}{\Delta}\left(\begin{array}{cccc}
    A_{11} & A_{21} & \cdots & A_{m1} \\
    A_{12} & A_{22} & \cdots & A_{m2} \\
    \hdotsfor{4}\\
    A_{1m} & A_{2m} & \cdots & A_{mm} \\
  \end{array}\right)\left(\begin{array}{c}
    dX_{1} \\
    dX_{2}\\
    \vdots\\
    dX_{m}\\
  \end{array}\right)(I+Z^TZ)^{-1}\left(\begin{array}{cccc}
    dX^{T}_{1} & dX^{T}_{2} & \cdots & dX^{T}_{m}\\
  \end{array}\right)\right]$$
$$=\frac{1}{\Delta}\left(\begin{array}{cccc}
    dX_{1} & dX_{2} & \cdots & dX_{m}\\
  \end{array}\right)
  \left(\begin{array}{cccc}
    A_{11}R & A_{21}R & \cdots & A_{m1}R \\
    A_{12}R & A_{22}R & \cdots & A_{m2}R \\
    \hdotsfor{4}\\
    A_{1m}R & A_{2m}R & \cdots & A_{mm}R \\
  \end{array}\right)
  \left(\begin{array}{c}
    dX^{T}_{1} \\
    dX^{T}_{2}\\
    \vdots\\
    dX^{T}_{m}\\
  \end{array}\right)$$
where $X_\alpha=row_\alpha Z,\ \alpha=1,2,\cdots,m.$ So we get the result
$$G=\frac{1}{\Delta}\left(\begin{array}{cccc}
    A_{11}(I+Z^TZ)^{-1} & A_{21}(I+Z^TZ)^{-1} & \cdots & A_{m1}(I+Z^TZ)^{-1} \\
    A_{12}(I+Z^TZ)^{-1} & A_{22}(I+Z^TZ)^{-1} & \cdots & A_{m2}(I+Z^TZ)^{-1} \\
    \hdotsfor{4}\\
    A_{1m}(I+Z^TZ)^{-1} & A_{2m}(I+Z^TZ)^{-1} & \cdots & A_{mm}(I+Z^TZ)^{-1} \\
  \end{array}\right)$$
we can see that the matrix $G$ is rank of $m(n-m)$.
\end{proof}

By computation we have
\begin{equation}
G^{-1}=\left(\begin{array}{cccc}
    \langle \xi _1,\xi _1\rangle(I+Z^TZ) & \langle \xi _1,\xi _2\rangle(I+Z^TZ) & \cdots & \langle \xi _1,\xi _m\rangle(I+Z^TZ) \\
    \langle \xi _2,\xi _1\rangle(I+Z^TZ) & \langle \xi _2,\xi _2\rangle(I+Z^TZ) & \cdots & \langle \xi _2,\xi _m\rangle(I+Z^TZ) \\
    \hdotsfor{4}\\
    \langle \xi _m,\xi _1\rangle(I+Z^TZ) & \langle \xi _m,\xi _2\rangle(I+Z^TZ) & \cdots & \langle \xi _m,\xi _m\rangle(I+Z^TZ) \\
  \end{array}\right)\end{equation}
By simple computation, it is obviously that $G$ is the unit matrix of $m(n-m)\times m(n-m)$ at critical point $O_{i_{1}i_{2}\cdots i_{m}}$.

\section{The study of the dynamical system $\dot{x}=-\nabla f$}

We can use the above-mentioned Riemannian metric to define the negative gradient vector field $-\nabla f$ on $G_{n,m}(\mathbb{R})$ for the Morse function $f\doteq\frac{\Delta}{\bar{\Delta}}:G_{n,m}(\mathbb{R})\rightarrow\mathbb{R}$, which has the following local expression on $\{(\widetilde{U}_{i_{1}i_{2}\cdots i_{m}},\widetilde{\phi }_{i_{1}i_{2}\cdots i_{m}})|1\leq i_{1}<i_{2}<\cdots<i_{m}\leq n\}$
$$-\nabla f=-((\nabla f)_{1k_1},(\nabla f)_{1k_2},\cdots,(\nabla f)_{1k_{n-m}},\cdots,(\nabla f)_{mk_1},(\nabla f)_{mk_2},\cdots,(\nabla f)_{mk_{n-m}}).$$
we can get the computation formula of $-\nabla f$ by $g(\nabla f,V)=Vf$,
$$-\nabla f=-g^{ij}\frac{\partial f}{\partial x_{j}}\frac{\partial}{\partial x_{i}}$$
where $g^{ij}$ is the inverse of the matrix expression of the Riemannian metric $g$.

By computation we have
\begin{align*}
(\nabla f)_{ik_s}&=\langle \xi _i,\xi _1\rangle\langle\eta_{k_s},\eta_{k_1}\rangle\frac{\partial f}{\partial x_{1k_{1}}}+\cdots+\langle\xi_i,\xi_1\rangle\langle\eta_{k_s},\eta_{k_{n-m}}\rangle\frac{\partial f}{\partial x_{1k_{n-m}}}\\
&+\langle \xi _i,\xi _2\rangle\langle\eta_{k_s},\eta_{k_1}\rangle\frac{\partial f}{\partial x_{2k_{1}}}+\cdots+\langle\xi_i,\xi_2\rangle\langle\eta_{k_s},\eta_{k_{n-m}}\rangle\frac{\partial f}{\partial x_{2k_{n-m}}}\\
&+\cdots\\
&+\langle\xi_i,\xi_m\rangle\langle\eta_{k_s},\eta_{k_{1}}\rangle\frac{\partial f}{\partial x_{mk_{1}}}+\cdots+\langle \xi _i,\xi _m\rangle\langle\eta_{k_s},\eta_{k_{n-m}}\rangle\frac{\partial f}{\partial x_{mk_{n-m}}}.
\end{align*}

Because (1) and (2) we have
\begin{align*}
\frac{\partial f}{\partial x_{ij}}
&=\frac{1}{\bar{\Delta}^{2}}\left(2(x_{1j}A_{1i}+x_{2j}A_{2i}+\cdots+x_{mj}A_{mi})\cdot\bar{\Delta}-2(\frac{x_{1j}}{\lambda^{2}_{j}}\overline{A}_{1i}+\frac{x_{2j}}{\lambda^{2}_{j}}\overline{A}_{2i}+\cdots+\frac{x_{mj}}{\lambda^{2}_{j}}\overline{A}_{mi})\cdot\Delta\right)
\end{align*}
so we get
\begin{align*}
(\nabla f)_{ik_s}
&=\frac{2}{\bar{\Delta}^{2}}\sum_{t=1}^{n-m}[\langle\xi_i,\xi_1\rangle\langle\eta_{k_s},\eta_{k_{t}}\rangle(x_{1k_{t}}A_{11}+x_{2k_{t}}A_{21}+\cdots+x_{mk_{t}}A_{m1})\cdot\bar{\Delta}\\
&+\langle\xi_i,\xi_2\rangle\langle\eta_{k_s},\eta_{k_{t}}\rangle(x_{1k_{t}}A_{12}+x_{2k_{t}}A_{22}+\cdots+x_{mk_{t}}A_{m2})\cdot\bar{\Delta}+\cdots\\
&+\langle\xi_i,\xi_m\rangle\langle\eta_{k_s},\eta_{k_{t}}\rangle(x_{1k_{t}}A_{1m}+x_{2k_{t}}A_{2m}+\cdots+x_{mk_{t}}A_{mm})\cdot\bar{\Delta}]\\
&-\frac{2}{\bar{\Delta}^{2}}\sum_{t=1}^{n-m}[\langle\xi_i,\xi_1\rangle\langle\eta_{k_s},\eta_{k_{t}}\rangle(\frac{x_{1k_{t}}}{\lambda^{2}_{k_{t}}}\overline{A}_{11}+\frac{x_{2k_{t}}}{\lambda^{2}_{k_{t}}}\overline{A}_{21}+\cdots+\frac{x_{mk_{t}}}{\lambda^{2}_{k_{t}}}\overline{A}_{m1})\cdot\Delta\\
&+\langle\xi_i,\xi_2\rangle\langle\eta_{k_s},\eta_{k_{t}}\rangle(\frac{x_{1k_{t}}}{\lambda^{2}_{k_{t}}}\overline{A}_{12}+\frac{x_{2k_{t}}}{\lambda^{2}_{k_{t}}}\overline{A}_{22}+\cdots+\frac{x_{mk_{t}}}{\lambda^{2}_{k_{t}}}\overline{A}_{m2})\cdot\Delta+\cdots\\
&+\langle\xi_i,\xi_m\rangle\langle\eta_{k_s},\eta_{k_{t}}\rangle(\frac{x_{1k_{t}}}{\lambda^{2}_{k_{t}}}\overline{A}_{1m}+\frac{x_{2k_{t}}}{\lambda^{2}_{k_{t}}}\overline{A}_{2m}+\cdots+\frac{x_{mk_{t}}}{\lambda^{2}_{k_{t}}}\overline{A}_{mm})\cdot\Delta]\\
&=\frac{2}{\bar{\Delta}^{2}}[\langle\xi_i,\xi_1\rangle\cdot\Delta\cdot\bar{\Delta}\cdot x_{1k_{s}}+\langle\xi_i,\xi_2\rangle\cdot\Delta\cdot\bar{\Delta}\cdot x_{2k_{s}}+\cdots+\langle\xi_i,\xi_m\rangle\cdot\Delta\cdot\bar{\Delta}\cdot x_{mk_{s}}]\\
&-\frac{2}{\bar{\Delta}^{2}}[\langle\xi_i,\xi_1\rangle\cdot\Delta\cdot\bar{\Delta}\cdot x_{1k_{s}}+\langle\xi_i,\xi_1\rangle\cdot\Delta\left(\overline{A}_{11}(\frac{1}{\lambda^{2}_{k_{s}}}-\frac{1}{\lambda^{2}_{i_{1}}})x_{1k_{s}}+\cdots+\overline{A}_{m1}(\frac{1}{\lambda^{2}_{k_{s}}}-\frac{1}{\lambda^{2}_{i_{m}}})x_{mk_{s}}\right)\\
&+\langle\xi_i,\xi_2\rangle\cdot\Delta\cdot\bar{\Delta}\cdot x_{2k_{s}}+\langle\xi_i,\xi_2\rangle\cdot\Delta\left(\overline{A}_{12}(\frac{1}{\lambda^{2}_{k_{s}}}-\frac{1}{\lambda^{2}_{i_{1}}})x_{1k_{s}}+\cdots+\overline{A}_{m2}(\frac{1}{\lambda^{2}_{k_{s}}}-\frac{1}{\lambda^{2}_{i_{m}}})x_{mk_{s}}\right)+\cdots\\
&+\langle\xi_i,\xi_m\rangle\cdot\Delta\cdot\bar{\Delta}\cdot x_{mk_{s}}+\langle\xi_i,\xi_m\rangle\cdot\Delta\left(\overline{A}_{1m}(\frac{1}{\lambda^{2}_{k_{s}}}-\frac{1}{\lambda^{2}_{i_{1}}})x_{1k_{s}}+\cdots+\overline{A}_{mm}(\frac{1}{\lambda^{2}_{k_{s}}}-\frac{1}{\lambda^{2}_{i_{m}}})x_{mk_{s}}\right)]\\
&=-\frac{2\Delta}{\bar{\Delta}^{2}}[(\frac{1}{\lambda^{2}_{k_{s}}}-\frac{1}{\lambda^{2}_{i_{1}}})\cdot(\langle\xi_i,\xi_1\rangle\overline{A}_{11}+\langle\xi_i,\xi_2\rangle\overline{A}_{12}+\cdots+\langle\xi_i,\xi_m\rangle\overline{A}_{1m})\cdot x_{1k_{s}}\\
&+(\frac{1}{\lambda^{2}_{k_{s}}}-\frac{1}{\lambda^{2}_{i_{2}}})\cdot(\langle\xi_i,\xi_1\rangle\overline{A}_{21}+\langle\xi_i,\xi_2\rangle\overline{A}_{22}+\cdots+\langle\xi_i,\xi_m\rangle\overline{A}_{2m})\cdot x_{2k_{s}}+\cdots\\
&+(\frac{1}{\lambda^{2}_{k_{s}}}-\frac{1}{\lambda^{2}_{i_{m}}})\cdot(\langle\xi_i,\xi_1\rangle\overline{A}_{m1}+\langle\xi_i,\xi_2\rangle\overline{A}_{m2}+\cdots+\langle\xi_i,\xi_m\rangle\overline{A}_{mm})\cdot x_{mk_{s}}]
\end{align*}
by the computation we have
\begin{equation}
\left(\begin{array}{c}
    (\nabla f)_{1k_1} \\
    (\nabla f)_{1k_2} \\
    \vdots\\
    (\nabla f)_{1k_{n-m}}\\
    \vdots\\
    (\nabla f)_{mk_1}\\
    (\nabla f)_{mk_2}\\
    \vdots\\
    (\nabla f)_{mk_{n-m}}\\
    \end{array}\right)=-\frac{2\Delta}{\bar{\Delta}^{2}}\left(\begin{array}{ccc}
    \langle \xi _1,\xi _1\rangle I_{n-m} &  \cdots & \langle \xi _1,\xi _m\rangle I_{n-m} \\
    \langle \xi _2,\xi _1\rangle I_{n-m} &  \cdots & \langle \xi _2,\xi _m\rangle I_{n-m} \\
    \hdotsfor{3}\\
    \langle \xi _m,\xi _1\rangle I_{n-m} &  \cdots & \langle \xi _m,\xi _m\rangle I_{n-m} \\
  \end{array}\right)
  \cdot F\cdot H\cdot X
\end{equation}
where
$$X=(x_{1k_1},x_{1k_2},\cdots,x_{1k_{n-m}},x_{2k_1},x_{2k_2},\cdots,x_{2k_{n-m}},\cdots,x_{mk_1},x_{mk_2},\cdots,x_{mk_{n-m}})^T.$$
$$F=\left(\begin{array}{cccc}
    \overline{A}_{11}I_{n-m} & \overline{A}_{21}I_{n-m} & \cdots & \overline{A}_{m1}I_{n-m} \\
    \overline{A}_{12}I_{n-m} & \overline{A}_{22}I_{n-m} & \cdots & \overline{A}_{m1}I_{n-m} \\
    \hdotsfor{4}\\
    \overline{A}_{1m}I_{n-m} & \overline{A}_{2m}I_{n-m} & \cdots & \overline{A}_{mm}I_{n-m} \\
  \end{array}\right)$$
$$H=\begin{pmatrix}
H_{11} & & & \text{{\huge{0}}}\\
& H_{22} & & \\
&  & \ddots \\
\text{{\huge{0}}} & & & H_{(n-m)(n-m)}
\end{pmatrix}$$
where
$$H_{ss}=\begin{pmatrix}
(\frac{1}{\lambda^{2}_{k_{s}}}-\frac{1}{\lambda^{2}_{i_{1}}}) & & & \text{{\huge{0}}}\\
& (\frac{1}{\lambda^{2}_{k_{s}}}-\frac{1}{\lambda^{2}_{i_{2}}}) & & \\
&  & \ddots \\
\text{{\huge{0}}} & & & (\frac{1}{\lambda^{2}_{k_{s}}}-\frac{1}{\lambda^{2}_{i_{m}}})
\end{pmatrix}$$
$s=1,2,\cdots,n-m.$

\begin{lemma}
$O_{i_{1}i_{2}\cdots i_{m}}$ is a hyperbolic singular point of $\dot{x}=-\nabla f$, and the linear part of $\dot{x}=-\nabla f$ at $O_{i_{1}i_{2}\cdots i_{m}}$ has the expression $AX$ on $(\widetilde{U}_{i_{1}i_{2}\cdots i_{m}},\widetilde{\phi }_{i_{1}i_{2}\cdots i_{m}})$ where
$$A=-2\lambda^{2}_{i_{1}}\lambda^{2}_{i_{2}}\cdots\lambda^{2}_{i_{m}}\mathrm{diag}\left(1-\frac{\lambda^{2}_{i_{1}}}{\lambda^{2}_{k_{1}}},\cdots,1-\frac{\lambda^{2}_{i_{1}}}{\lambda^{2}_{k_{n-m}}},
\cdots,1-\frac{\lambda^{2}_{i_{m}}}{\lambda^{2}_{k_{1}}},\cdots,1-\frac{\lambda^{2}_{i_{m}}}{\lambda^{2}_{k_{n-m}}}\right)$$
and
$X=(x_{1k_1},x_{1k_2},\cdots,x_{1k_{n-m}},x_{2k_1},x_{2k_2},\cdots,x_{2k_{n-m}},\cdots,x_{mk_1},x_{mk_2},\cdots,x_{mk_{n-m}})^T.$
\end{lemma}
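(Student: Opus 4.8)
The plan is to read the linear part straight off the explicit formula (4), which already presents the whole vector field as a matrix function of the coordinates times the coordinate vector $X$. Write (4) as $-\nabla f=\frac{2\Delta}{\bar{\Delta}^{2}}\,N(X)\,X$, where $N(X)$ is the product of the matrix with blocks $\langle\xi_i,\xi_j\rangle I_{n-m}$, the matrix $F$ built from the cofactors $\overline{A}_{ij}$, and the constant diagonal matrix $H$. Near $O$ the denominator $\bar{\Delta}$ does not vanish, so the scalar $\frac{2\Delta}{\bar{\Delta}^{2}}$ and every entry of $N(X)$ are smooth in $X$. The crucial structural observation is that the right-hand side carries an explicit factor $X$: differentiating at $O$, where $X=0$, any term in which a coefficient is differentiated is multiplied by $X=0$ and drops out. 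Hence the linear part of $-\nabla f$ at $O$ is exactly $\frac{2\Delta}{\bar{\Delta}^{2}}\big|_{O}\,N(O)\,X$, and I only need the coefficient matrix evaluated at the critical point.

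Next I would substitute the values at $O=\widetilde{\phi}^{-1}_{i_1\cdots i_m}(0,\dots,0)$. There $\xi_\alpha=e_{i_\alpha}$, so $\langle\xi_i,\xi_j\rangle=\delta_{ij}$, giving $\Delta=1$ and making the block matrix $(\langle\xi_i,\xi_j\rangle I_{n-m})$ the identity; likewise the Gram matrix of the $\overline{\xi}_\alpha$ is $\mathrm{diag}(\lambda^{-2}_{i_1},\dots,\lambda^{-2}_{i_m})$, so $\bar{\Delta}=(\lambda^2_{i_1}\cdots\lambda^2_{i_m})^{-1}$ and the cofactors collapse to $\overline{A}_{\beta\gamma}(O)=\delta_{\beta\gamma}\,\lambda^2_{i_\beta}/(\lambda^2_{i_1}\cdots\lambda^2_{i_m})$. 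With $M(O)=I$ and $F(O)$ diagonal in the $(\alpha,s)$-basis, the product $N(O)=M(O)F(O)H$ is diagonal, with $(\alpha,s)$-entry $\frac{\lambda^2_{i_\alpha}}{P}\big(\lambda^{-2}_{k_s}-\lambda^{-2}_{i_\alpha}\big)$, where $P=\lambda^2_{i_1}\cdots\lambda^2_{i_m}$; multiplying by $\frac{2\Delta}{\bar{\Delta}^{2}}\big|_{O}=2P^{2}$ produces precisely the stated $A$. I expect the one delicate point to be the index bookkeeping in this collapse: $F(O)$ is grouped by the row index $\alpha$ while $H$ is grouped by $s$, and I must check that both are genuinely diagonal in the common $(\alpha,s)$-ordering of $X$ (the off-diagonal blocks vanishing because $\langle\xi_i,\xi_j\rangle=\delta_{ij}$ and the $\overline{A}$-matrix is diagonal), so that their product is a single diagonal matrix and no off-diagonal term survives.

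As a cross-check, and in fact a cleaner route that avoids re-deriving (4), I would invoke the general principle that the linearization of $-\nabla f$ at a critical point $p$ equals $-G(p)^{-1}\,\mathrm{Hess}_p(f)$: since $\nabla f=G^{-1}\,df$, the term differentiating $G^{-1}$ is multiplied by $df(p)=0$ and drops out. By Lemma 4 the metric matrix $G$ is the identity at $O$, and by Lemma 3 the Hessian is $\mathrm{Hess}_O(f)=2P\,\mathrm{diag}\big(1-\lambda^2_{i_\alpha}/\lambda^2_{k_s}\big)$; hence $A=-\mathrm{Hess}_O(f)$, which is exactly the claimed expression and also makes the first paragraph's reduction rigorous.

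Finally, hyperbolicity is immediate from this diagonal form: the eigenvalues of $A$ are the numbers $-2P\big(1-\lambda^2_{i_\alpha}/\lambda^2_{k_s}\big)$, and because each $i_\alpha\neq k_s$ and the $\lambda$'s are distinct and positive, every factor $1-\lambda^2_{i_\alpha}/\lambda^2_{k_s}$ is nonzero. Thus $A$ is real with all eigenvalues real and nonzero, so none lies on the imaginary axis and $O$ is a hyperbolic singular point; moreover the eigenvalues that are positive (those with $i_\alpha>k_s$) span the unstable subspace, whose dimension agrees with the Morse index $\sum_\alpha i_\alpha-\tfrac{1}{2}m(m+1)$ computed in Lemma 3.
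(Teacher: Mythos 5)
Your proof is correct and follows essentially the same route as the paper: evaluate the coefficient matrices of equation (4) at $O_{i_{1}i_{2}\cdots i_{m}}$, where the Gram matrix of the $\xi_\alpha$ becomes the identity, $\frac{2\Delta}{\bar{\Delta}^{2}}=2\lambda^{4}_{i_{1}}\cdots\lambda^{4}_{i_{m}}$, and $F$ collapses to the diagonal form, so the product with $H$ gives the stated $A$. Your additions --- the product-rule justification that only the value of the coefficient matrix at $X=0$ survives linearization, the careful reconciliation of the $(\alpha,s)$ versus $(s,\alpha)$ index groupings of $F$ and $H$, the cross-check $A=-G(O)^{-1}\mathrm{Hess}_{O}(f)=-\mathrm{Hess}_{O}(f)$, and the explicit verification of hyperbolicity from the distinctness of the $\lambda$'s --- are all sound, and the last of these actually supplies the hyperbolicity claim, which the paper's own proof asserts in the statement but never argues.
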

\begin{proof}
Because at $O_{i_{1}i_{2}\cdots i_{m}}$ by comptutation $\frac{2\Delta}{\bar{\Delta}^{2}}(O_{i_{1}i_{2}\cdots i_{m}})=2\lambda^{4}_{i_{1}}\lambda^{4}_{i_{2}}\cdots\lambda^{4}_{i_{m}}$,
$$F(O_{i_{1}i_{2}\cdots i_{m}})=\begin{pmatrix}
(\frac{1}{\lambda^{2}_{i_{2}}}\frac{1}{\lambda^{2}_{i_{3}}}\cdots\frac{1}{\lambda^{2}_{i_{m}}})I_{n-m} & & & \text{{\huge{0}}}\\
& (\frac{1}{\lambda^{2}_{i_{1}}}\frac{1}{\lambda^{2}_{i_{3}}}\cdots\frac{1}{\lambda^{2}_{i_{m}}})I_{n-m} & & \\
&  & \ddots \\
\text{{\huge{0}}} & & & (\frac{1}{\lambda^{2}_{i_{1}}}\frac{1}{\lambda^{2}_{i_{2}}}\cdots\frac{1}{\lambda^{2}_{i_{m-1}}})I_{n-m}
\end{pmatrix}$$
then by (4) we can get $$A=-2\lambda^{2}_{i_{1}}\lambda^{2}_{i_{2}}\cdots\lambda^{2}_{i_{m}}\mathrm{diag}\left(1-\frac{\lambda^{2}_{i_{1}}}{\lambda^{2}_{k_{1}}},\cdots,1-\frac{\lambda^{2}_{i_{1}}}{\lambda^{2}_{k_{n-m}}},
\cdots,1-\frac{\lambda^{2}_{i_{m}}}{\lambda^{2}_{k_{1}}},\cdots,1-\frac{\lambda^{2}_{i_{m}}}{\lambda^{2}_{k_{n-m}}}\right)$$
\end{proof}

Let $M$ be a compact smooth Riemannian manifold with metirc $g$, and $f$ be a Morse function on $M$, then the negative gradient vector field $-\nabla f$ determines a smooth flow $\varphi:\mathbb{R}\times M\rightarrow M$, and $\varphi_{t}$ is a diffeomorphism of $M$ for all $t\in \mathbb{R}$(see [4]).

\begin{definition}[{see [4] }]
Let $p\in M$ be a non-degenerate critical point of Morse function f,
the stable manifold of $p$ is defined to be $$W^{s}(p)=\{ x\in M\mid  \lim_{t\rightarrow+\infty}\varphi_{t}(x)=p  \}$$
the unstable manifold of $p$ is defined to be
$$W^{u}(p)=\{ x\in M\mid  \lim_{t\rightarrow-\infty}\varphi_{t}(x)=p  \}$$

\end{definition}

Obviously
$$\partial_{1k_{1}},\partial_{1k_{2}},\cdots,\partial_{1k_{n-m}},\partial_{2k_{1}},\cdots,\partial_{2k_{n-m}},\partial_{mk_{1}},\cdots,\partial_{mk_{n-m}}$$
is the orthonormal basis of the tangent space on $\widetilde{U}_{i_{1}i_{2}\cdots i_{m}}$, where $\partial_{ij}=\frac{\partial}{\partial x_{ij}}$.

By the computation in lemma 3., we have linear part $A=-H_{O_{i_{1}i_{2}\cdots i_{m}}}(f)$, so use the stabel and unstable manifold theorem for a Morse function(see [4]), we identified the positive definite eigenvalue subspace
$$E^{s}(O_{i_{1}i_{2}\cdots i_{m}})=$$
$$\rm{span}_{\mathbb{R}}\{\partial_{1(i_{1}+1)},\partial_{1(i_{1}+2)},\cdots,\widehat{\partial_{1i_{2}}},\cdots,\widehat{\partial_{1i_{3}}},\cdots,\widehat{\partial_{1i_{m}}},\cdots,\partial_{1n},$$
$$\partial_{2(i_{2}+1)},\partial_{2(i_{2}+2)},\cdots,\widehat{\partial_{2i_{3}}},\cdots,\widehat{\partial_{2i_{m}}},\cdots,\partial_{2n},$$
$$\partial_{3(i_{3}+1)},\partial_{3(i_{3}+1)},\cdots,\widehat{\partial_{3i_{4}}},\cdots,\widehat{\partial_{3i_{m}}},\cdots,\partial_{3n},$$
$$\cdots,$$
$$\partial_{m(i_{m}+1)},\partial_{m(i_{m}+2)},\cdots,\partial_{mn}\}$$
and negative definite eigenvalue subspace
$$E^{u}(O_{i_{1}i_{2}\cdots i_{m}})=$$
$$\rm{span}_{\mathbb{R}}\{\partial_{11},\partial_{12},\cdots,\partial_{1(i_{1}-1)},$$
$$\partial_{21},\partial_{22},\cdots,\widehat{\partial_{2i_{1}}},\cdots,\partial_{2(i_{2}-1)},$$
$$\partial_{31},\partial_{32},\cdots,\widehat{\partial_{3i_{1}}},\cdots,\widehat{\partial_{3i_{2}}},\cdots,\partial_{3(i_{3}-1)},$$
$$\cdots,$$
$$\partial_{m1},\partial_{m2},\cdots,\widehat{\partial_{mi_{1}}},\cdots,\widehat{\partial_{mi_{2}}},\cdots,\widehat{\partial_{mi_{3}}},\cdots,\widehat{\partial_{mi_{m-1}}},\cdots,\partial_{m(i_{m}-1)}\}$$
and
$$E^{s}(O_{i_{1}i_{2}\cdots i_{m}})=T_{O_{i_{1}i_{2}\cdots i_{m}}}W^{s}(O_{i_{1}i_{2}\cdots i_{m}}), \ E^{u}(O_{i_{1}i_{2}\cdots i_{m}})=T_{O_{i_{1}i_{2}\cdots i_{m}}}W^{u}(O_{i_{1}i_{2}\cdots i_{m}}).$$
where $\widehat{\partial_{ij}}$ means this one is empty.

\begin{definition}[{see [6]}]
An invariant manifold $N$ of a vector field $V$ on a manifold $M$ and of the corresponding differential equation $\dot{x}=V(x)$ is defined to be a submanifold of $M$ which is tangent to the vector field $V$ at each of its points.

An invariant manifold $N$ is global if the initial value problem $$\dot{x}=V(x), \ x(0)=p$$
has a global solution $x=x(t)$, $(-\infty<t<+\infty)$ for any $p\in N$.
\end{definition}

\begin{lemma}
The following sets as global invariant manifold of $\dot{x}=-\nabla f$ on $G_{n,m}(\mathbb{R})$,
$$\widetilde{U}_{i_{1}i_{2}\cdots i_{m}}(x_{\beta k_{s_{1}}},x_{\beta k_{s_{2}}},\cdots,x_{\beta k_{s_{t}}})$$
$$=\pi(\{(x_{\alpha k})\in U_{i_{1}i_{2}\cdots i_{m}}\mid x_{\alpha k}=0, (\alpha,k)\neq (\beta,k_{s_{p}}),p=1,\cdots,t \}),$$
$$\widetilde{U}_{i_{1}i_{2}\cdots i_{m}}(x_{\beta_{1} k_{s^{1}_{1}}},x_{\beta_{1} k_{s^{1}_{2}}},\cdots,x_{\beta_{1} k_{s^{1}_{t_{1}}}},x_{\beta_{2} k_{s^{2}_{1}}},x_{\beta_{2} k_{s^{2}_{2}}},\cdots,x_{\beta_{2} k_{s^{2}_{t_{2}}}})$$
$$=\pi(\{(x_{\alpha k})\in U_{i_{1}i_{2}\cdots i_{m}}\mid x_{\alpha k}=0, (\alpha,k)\neq (\beta_{j},k_{s^{j}_{p}}),j=1,2;p=1,\cdots,t_{j} \}),$$
$$\widetilde{U}_{i_{1}i_{2}\cdots i_{m}}(x_{\beta_{1} k_{s^{1}_{1}}},x_{\beta_{1} k_{s^{1}_{2}}},\cdots,x_{\beta_{1} k_{s^{1}_{t_{1}}}},x_{\beta_{2} k_{s^{2}_{1}}},x_{\beta_{2} k_{s^{2}_{2}}},\cdots,x_{\beta_{2} k_{s^{2}_{t_{2}}}},x_{\beta_{3} k_{s^{3}_{1}}},x_{\beta_{3} k_{s^{3}_{2}}},\cdots,x_{\beta_{3} k_{s^{3}_{t_{3}}}})$$
$$=\pi(\{(x_{\alpha k})\in U_{i_{1}i_{2}\cdots i_{m}}\mid x_{\alpha k}=0, (\alpha,k)\neq (\beta_{j},k_{s^{j}_{p}}),j=1,2,3;p=1,\cdots,t_{j} \}),$$
$$\cdots\cdots\cdots$$
$$\widetilde{U}_{i_{1}i_{2}\cdots i_{m}}(x_{\beta_{j} k_{s^{j}_{1}}},x_{\beta_{j} k_{s^{j}_{2}}},\cdots,x_{\beta_{j} k_{s^{j}_{t_{j}}}})$$
$$=\pi(\{(x_{\alpha k})\in U_{i_{1}i_{2}\cdots i_{m}}\mid x_{\alpha k}=0, (\alpha,k)\neq (\beta_{j},k_{s^{j}_{p}}),j=1,2,3,\cdots,m;p=1,\cdots,t_{j} \}).$$
\end{lemma}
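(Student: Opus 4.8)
\emph{Plan.} I would verify the two requirements of Definition 2 in turn: that each listed set is a submanifold tangent to $-\nabla f$, and that the trajectory through any of its points exists for all $t\in(-\infty,+\infty)$. The first is immediate, for in the chart $(\widetilde U_{i_1\cdots i_m},\widetilde\phi_{i_1\cdots i_m})$ each listed set is the slice of $\widetilde U_{i_1\cdots i_m}$ obtained by setting to zero every coordinate $x_{\alpha k}$ outside a prescribed list; thus $\widetilde\phi_{i_1\cdots i_m}$ carries it diffeomorphically onto a linear coordinate subspace of $\mathbb R^{m(n-m)}$, so it is an embedded submanifold. It is convenient to encode such a set by the datum, for each row $\alpha$, of the set $K_\alpha$ of columns in which $x_{\alpha k}$ is allowed to be nonzero, so that $x_{\alpha k}=0$ exactly when $k\notin K_\alpha$.

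For tangency I would argue entirely from the explicit expression (4), which gives
$$(\nabla f)_{ak_s}=-\frac{2\Delta}{\bar{\Delta}^{2}}\sum_{b=1}^{m}\Big(\frac{1}{\lambda_{k_s}^{2}}-\frac{1}{\lambda_{i_b}^{2}}\Big)\Big(\sum_{c=1}^{m}\langle\xi_a,\xi_c\rangle\overline{A}_{bc}\Big)x_{bk_s}.$$
The crucial feature is that $(\nabla f)_{ak_s}$ depends on the coordinates only through those of the \emph{same} column $k_s$, namely $x_{1k_s},\dots,x_{mk_s}$. Tangency of $-\nabla f$ to the set is therefore equivalent to the vanishing on the set of this sum for every constrained pair $(a,k_s)$, that is, for every $a$ with $k_s\notin K_a$. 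Two mechanisms eliminate most terms. First, if $k_s$ lies outside every support $K_\beta$ then $x_{bk_s}=0$ for all $b$ and the sum is empty; this disposes of all columns in the complement of $\bigcup_\alpha K_\alpha$. Second, a row $a$ whose free coordinates all vanish on the set has $\xi_a=e_{i_a}$ in this chart, whence $\langle\xi_a,\xi_c\rangle=\delta_{ac}$ and $\langle\overline\xi_a,\overline\xi_c\rangle=\lambda_{i_a}^{-2}\delta_{ac}$; consequently both Gram matrices $\Gamma=I+ZZ^{T}$ and its barred analogue $\overline\Gamma$ decouple those ``frozen'' rows as a separate block. Using the adjugate identities $\overline{A}_{bc}=\bar{\Delta}\,(\overline\Gamma^{-1})_{cb}$ and hence $\sum_c\langle\xi_a,\xi_c\rangle\overline{A}_{bc}=\bar{\Delta}\,(\Gamma\overline\Gamma^{-1})_{ab}$, the coupling coefficient vanishes whenever $a$ is frozen and $b$ active. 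These observations prove the single-row sets invariant and, more generally, settle every configuration in which the supports of distinct active rows are pairwise equal or disjoint --- in the disjoint case $\Gamma$ and $\overline\Gamma$ are in fact diagonal, so the off-diagonal cofactors $\overline{A}_{bc}$ with $b\neq c$ vanish outright.

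The step I expect to be the real obstacle is the cancellation of the cross terms $\sum_c\langle\xi_a,\xi_c\rangle\overline{A}_{bc}$ when two active rows share some but not all of their columns: then $a$ and $b$ can lie in the same active block, $(\Gamma\overline\Gamma^{-1})_{ab}$ need not vanish, and one must check by direct expansion that this factor nevertheless dies for every constrained $(a,k_s)$. I would attack it by expanding the factor through $\overline{A}_{bc}=\bar{\Delta}\,(\overline\Gamma^{-1})_{cb}$ and tracking how the supports $K_\alpha$ enter the entries $\langle\xi_a,\xi_c\rangle$ and $\langle\overline\xi_a,\overline\xi_c\rangle$; the telescoping that settled the single-row case must be reproduced column by column, and it is precisely here that the combinatorial relation among the supports $K_{\beta_1},\dots,K_{\beta_j}$ of the listed rows is decisive and has to be pinned down. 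Once tangency is established, globality is the easy part: $G_{n,m}(\mathbb R)$ is compact, so the negative gradient flow $\varphi_t$ is complete; a trajectory starting at a point of a set tangent to $-\nabla f$ stays in that set by uniqueness of solutions of the ODE and is thus defined for all $t\in(-\infty,+\infty)$, which is exactly the requirement of a global invariant manifold in Definition 2.
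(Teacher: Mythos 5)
Your reduction of the tangency question to formula (4) is the right framework, and it is considerably more explicit than the paper's own proof, which simply asserts that $x_{\beta k}=0$ on the slice forces $(\nabla f)_{\beta k}=0$ there, with no computation at all. But the step you flag as ``the real obstacle'' is a genuine gap, and it cannot be closed: the cross-coupling coefficients $\sum_{\gamma}\langle\xi_a,\xi_\gamma\rangle\overline{A}_{b\gamma}=\bar{\Delta}\,(\Gamma\overline{\Gamma}^{-1})_{ab}$ do \emph{not} vanish when two active rows have partially overlapping supports, so several of the sets listed in the lemma are in fact not invariant. Concretely, take $G_{4,2}(\mathbb{R})$, the chart $\widetilde{U}_{12}$, and the slice $\widetilde{U}_{12}(x_{13},x_{14},x_{23})$, i.e. $K_1=\{3,4\}$, $K_2=\{3\}$ and $x_{24}=0$. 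Writing $a=x_{13}$, $c=x_{14}$, $b=x_{23}$, so $\xi_1=(1,0,a,c)$ and $\xi_2=(0,1,b,0)$, one finds $\langle\xi_2,\xi_1\rangle\overline{A}_{11}+\langle\xi_2,\xi_2\rangle\overline{A}_{12}=ab\bigl(\lambda_2^{-2}-\lambda_3^{-2}\bigr)$ rather than $0$, and hence (either from (4) or by computing $G^{-1}\partial f$ from scratch; the two agree)
$$(\nabla f)_{24}\Big|_{x_{24}=0}=\frac{2\Delta}{\bar{\Delta}^{2}}\left(\frac{1}{\lambda_1^{2}}-\frac{1}{\lambda_4^{2}}\right)\left(\frac{1}{\lambda_2^{2}}-\frac{1}{\lambda_3^{2}}\right)abc,$$
which is nonzero whenever $a,b,c\neq 0$. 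So $-\nabla f$ is not tangent to this slice. The failure is not confined to exotic configurations: the ``staircase'' slices that Lemma 8 actually relies on have nested, unequal supports and break in the same way. For instance, in the chart $\widetilde{U}_{24}$ of $G_{4,2}(\mathbb{R})$ the set $\{x_{13}=0\}$ (supports $K_1=\{1\}$, $K_2=\{1,3\}$), which is asserted to contain $W^{u}(O_{24})$, satisfies $(\nabla f)_{13}=\frac{2\Delta}{\bar{\Delta}^{2}}\bigl(\lambda_1^{-2}-\lambda_2^{-2}\bigr)\bigl(\lambda_3^{-2}-\lambda_4^{-2}\bigr)x_{11}x_{21}x_{23}\neq 0$ at generic points.

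The parts of your argument that do go through are exactly the ones you prove: columns outside every support, frozen rows (via the block decoupling of $\Gamma$ and $\overline{\Gamma}$), and configurations in which the supports of the active rows are pairwise equal or pairwise disjoint; together these cover the single-row slices and the one-parameter slices $\widetilde{U}_{i_1\cdots i_m}(x_{k(i_k-1)})$ used in Theorem 2, but not the general statement. Your treatment of globality (completeness of the flow on a compact manifold plus uniqueness of solutions) is fine and matches the paper. So the honest verdict is that your proposal correctly isolates the decisive cancellation, fails to establish it, and could not have: the cancellation is false, the lemma as stated is false, and the paper's proof does not address the point at all. Any repair would have to restrict the admissible support patterns $K_{\beta_1},\dots,K_{\beta_m}$ (or change the metric) and then re-examine Lemma 8 and everything downstream.
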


\begin{proof}
For any $\widetilde{U}_{i_{1}i_{2}\cdots i_{m}}(\cdots)$, when $x_{\beta k}\not\in(\cdots)$, then $x_{\beta k}=0$, so $\dot{x}_{\beta k}(t)=0$ on $\widetilde{U}_{i_{1}i_{2}\cdots i_{m}}(\cdots)$ and $-(\nabla f)_{\beta k}\mid_{\widetilde{U}_{i_{1}i_{2}\cdots i_{m}}(\cdots)}=\dot{x}_{\beta k}(t)=0$, then $-(\nabla f)\mid_{\widetilde{U}_{i_{1}i_{2}\cdots i_{m}}(\cdots)}$ is tangent vector field on $\widetilde{U}_{i_{1}i_{2}\cdots i_{m}}(\cdots)$. So $\widetilde{U}_{i_{1}i_{2}\cdots i_{m}}(\cdots)$ is invariant manifold of $\dot{x}=-\nabla f$ on $G_{n,m}(\mathbb{R})$.

Because $G_{n,m}(\mathbb{R})$ is compact smooth manifold, so the initial value problem $$\dot{x}=-\nabla f, \ x(0)=p$$ on $G_{n,m}(\mathbb{R})$ has global solution. $$\widetilde{U}_{i_{1}i_{2}\cdots i_{m}}(\cdots)\in G_{n,m}(\mathbb{R})$$ the initial value problem $$\dot{x}=-(\nabla f)\mid_{\widetilde{U}_{i_{1}i_{2}\cdots i_{m}}(\cdots)}, \ x(0)=p, \  p\in\widetilde{U}_{i_{1}i_{2}\cdots i_{m}}(\cdots)$$ on $\widetilde{U}_{i_{1}i_{2}\cdots i_{m}}(\cdots)$ also has global solution. Then $\widetilde{U}_{i_{1}i_{2}\cdots i_{m}}(\cdots)$ is the global invariant manifold.

\end{proof}

\begin{lemma}[{see [6]}]
Let $V$ be a smooth vector field on a manifold $M$ and $p\in M$ be a hyperbolic singular point of $V$. Let $N$ be a global invariant manifold of $V$ in $M$ and $p\in N$. Then we have the following sets equalities
$$W^{s}_{N}(p)=W^{s}(p)\cap N, \ W^{u}_{N}(p)=W^{u}(p)\cap N,$$
where $W^{s}_{N}(p)$,$W^{u}_{N}(p)$ are the stable and unstable manifold of $V\mid_{N}$, the restriction of $V$ on $N$ at $p$, particularly we have
$$W^{s}_{N}(p)=W^{s}(p) \ \ if \ \dim W^{s}_{N}(p)=\dim W^{s}(p)$$
$$W^{u}_{N}(p)=W^{u}(p) \ \ if  \ \dim W^{u}_{N}(p)=\dim W^{s}(p).$$
\end{lemma}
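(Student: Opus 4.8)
The plan is to reduce the whole statement to the single fact that, on a global invariant manifold, the ambient flow restricts to the flow of the restricted vector field. First I would observe that since $N$ is a global invariant manifold of $\dot{x}=V(x)$, for every $x\in N$ the integral curve $t\mapsto\varphi_t(x)$ exists for all $t$ and stays in $N$; by uniqueness of solutions of $\dot{x}=V(x)$ it is exactly the integral curve of the restricted field $V|_{N}$ through $x$. Writing $\psi_t$ for the flow of $V|_{N}$ on $N$, this gives $\psi_t(x)=\varphi_t(x)$ for all $x\in N$ and all $t$. This identification is the only place where the global-invariance hypothesis enters, and it is precisely what lets me compare $W^{s}_{N}(p)$ with $W^{s}(p)$.

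Next I would prove the set equality $W^{s}_{N}(p)=W^{s}(p)\cap N$ by two inclusions. For the inclusion $\subseteq$: if $x\in W^{s}_{N}(p)$ then $x\in N$ and $\psi_t(x)\to p$ as $t\to+\infty$ in $N$; since the inclusion $N\hookrightarrow M$ is continuous and $\psi_t(x)=\varphi_t(x)$, the same limit holds in $M$, so $x\in W^{s}(p)\cap N$. For the reverse inclusion: if $x\in W^{s}(p)\cap N$ then every point $\varphi_t(x)=\psi_t(x)$ lies in $N$ and $\varphi_t(x)\to p\in N$ in $M$; because $N$ is an embedded submanifold its intrinsic topology coincides with the subspace topology, so this convergence is also convergence in $N$, giving $x\in W^{s}_{N}(p)$. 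The equality $W^{u}_{N}(p)=W^{u}(p)\cap N$ is proved verbatim after reversing time, replacing $t\to+\infty$ by $t\to-\infty$.

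Finally, for the ``particularly'' clauses I would use that the stable manifold of a hyperbolic singular point is diffeomorphic to $\mathbb{R}^{\dim W^{s}(p)}$, hence connected. From the set equality, $W^{s}_{N}(p)=W^{s}(p)\cap N$ is a submanifold of $W^{s}(p)$; under the hypothesis $\dim W^{s}_{N}(p)=\dim W^{s}(p)$ it has full dimension in $W^{s}(p)$ and is therefore open there, while it is closed in $W^{s}(p)$ as the intersection with the closed invariant manifold $N$. A nonempty subset of the connected manifold $W^{s}(p)$ that is both open and closed must be all of it; since $p\in W^{s}_{N}(p)$ it is nonempty, so $W^{s}_{N}(p)=W^{s}(p)$. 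The unstable case follows by time reversal, reading $\dim W^{u}(p)$ in place of $\dim W^{s}(p)$.

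The step I expect to be the main obstacle is the reverse inclusion together with the open--closed argument, since both hinge on the same topological subtlety: I need $N$ to carry the subspace topology, so that convergence in $M$ of points of $N$ to a point of $N$ is genuine convergence in $N$, and I need $W^{s}(p)$ to be connected with $W^{s}_{N}(p)$ closed inside it. Both hold in the present situation because the invariant manifolds of Lemma 6 are closed embedded submanifolds and the stable and unstable manifolds of the Morse--Smale gradient flow are embedded cells; were $N$ only immersed, these steps could genuinely fail, so the embeddedness is the real content that must be checked.
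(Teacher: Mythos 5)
The paper itself does not prove this lemma --- it is imported from reference [6] with no argument given --- so there is nothing internal to compare against; I can only judge your proof on its own terms. Your first two paragraphs are sound: the identification $\psi_t=\varphi_t|_N$ via uniqueness of integral curves is exactly what the global-invariance hypothesis is for, and the two inclusions giving $W^{s}_{N}(p)=W^{s}(p)\cap N$ are correct, with the embeddedness of $N$ correctly isolated as the point where the reverse inclusion could fail. (You also silently repair the typo $\dim W^{u}_{N}(p)=\dim W^{s}(p)$ in the statement, which should read $\dim W^{u}(p)$; that is the right reading.)

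The gap is in the last paragraph. You claim $W^{s}_{N}(p)=W^{s}(p)\cap N$ is closed in $W^{s}(p)$ ``as the intersection with the closed invariant manifold $N$,'' and you assert that the invariant manifolds of Lemma 6 are closed embedded submanifolds. They are not closed: each $\widetilde{U}_{i_{1}\cdots i_{m}}(\cdots)$ is a linear coordinate subspace of the single affine chart $\widetilde{U}_{i_{1}\cdots i_{m}}\cong\mathbb{R}^{m(n-m)}$, which is open and dense in the compact Grassmannian; already for $G_{2,1}(\mathbb{R})=\mathbb{R}P^{1}$ the largest such invariant manifold is the chart itself, whose closure adds a point outside it. So the open-and-closed-in-connected argument does not close as written: a sequence in $W^{s}(p)\cap N$ could a priori converge in $W^{s}(p)$ to a point outside $N$. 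The standard repair avoids closedness entirely and uses invariance instead: by the stable manifold theorem every $x\in W^{s}(p)$ satisfies $\varphi_{t}(x)\to p$ in the intrinsic topology of $W^{s}(p)$ (it enters the local stable disk for large $t$), hence $\varphi_{t}(x)$ eventually lies in the set $W^{s}_{N}(p)$, which by your dimension count is an open neighborhood of $p$ in $W^{s}(p)$; since $W^{s}_{N}(p)$ is invariant under $\varphi_{-t}$, this forces $x\in W^{s}_{N}(p)$, i.e. $W^{s}(p)=W^{s}_{N}(p)$. With that substitution (and its time-reversed twin) the proof is complete.
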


\begin{lemma}
The stable and unstable manifold of $O_{i_{1}i_{2}\cdots i_{m}}$, have the following results
\begin{description}
\item[a)] $$W^{s}(O_{i_{1}i_{2}\cdots i_{m}})\subset\widetilde{U}_{i_{1}i_{2}\cdots i_{m}} (x_{\beta_{j}k_{s^{j}_{1}}},x_{\beta_{j}k_{s^{j}_{2}}},\cdots,x_{\beta_{j} k_{s^{j}_{t_{j}}}})$$
where $j=1,2,\cdots,m$;\\
when $j=1$, let $\beta_{1}=1$, $k_{s^{1}_{p}}=i_{1}+1,i_{1}+2,\cdots,\widehat{i_{2}},\cdots,\widehat{i_{3}},\cdots,\widehat{i_{m}},\cdots,n$;\\
when $j=2$, let $\beta_{2}=2$, $k_{s^{2}_{p}}=i_{2}+1,i_{2}+2,\cdots,\widehat{i_{3}},\cdots,\widehat{i_{4}},\cdots,\widehat{i_{m}},\cdots,n$;\\
$$\cdots\cdots\cdots$$
when $j=m$, let $\beta_{m}=m$, $k_{s^{m}_{p}}=i_{m}+1,i_{m}+2,\cdots,n$.

\item[b)] $$W^{u}(O_{i_{1}i_{2}\cdots i_{m}})\subset\widetilde{U}_{i_{1}i_{2}\cdots i_{m}}
(x_{\beta_{j}k_{s^{j}_{1}}},x_{\beta_{j}k_{s^{j}_{2}}},\cdots,x_{\beta_{j} k_{s^{j}_{t_{j}}}})$$
where $j=1,2,\cdots,m$;\\
when $j=1$, let $\beta_{1}=1$, $k_{s^{1}_{p}}=1,2,\cdots,i_{1}-1$;\\
when $j=2$, let $\beta_{2}=2$, $k_{s^{2}_{p}}=1,2,\cdots,\widehat{i_{1}},\cdots,i_{2}-1$;\\
$$\cdots\cdots\cdots$$
when $j=m$, let $\beta_{m}=m$, $k_{s^{m}_{p}}=1,2,\cdots,\widehat{i_{1}},\cdots,\widehat{i_{2}},\cdots,\widehat{i_{m-1}},\cdots,i_{m}-1$.
\end{description}
\end{lemma}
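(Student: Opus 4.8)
The plan is to derive both inclusions from the restriction principle of Lemma 7, using the explicit linearization already recorded in Lemma 5 and the family of global invariant manifolds furnished by Lemma 6. First I would observe that each candidate set in a) and b) is literally one of the invariant manifolds of Lemma 6: in case a) the free coordinates are exactly the $x_{\beta_j k_{s^j_p}}$ with $k_{s^j_p}>i_{\beta_j}$ (all other local coordinates being set to $0$), while in case b) they are exactly those with $k_{s^j_p}<i_{\beta_j}$. Denote these two manifolds by $N_s$ and $N_u$. Both contain $O_{i_1\cdots i_m}$ (the origin of the chart), and by Lemma 6 both are global invariant manifolds of $\dot{x}=-\nabla f$, so Lemma 7 is applicable to each once the critical point is known to be hyperbolic, which is the content of Lemma 5.

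Next I would read off the signs of the eigenvalues from Lemma 5. There the linear part of $-\nabla f$ at $O_{i_1\cdots i_m}$ is the diagonal matrix $A$ whose entry in the $\partial_{\alpha k_s}$ direction equals $-2\lambda^2_{i_1}\cdots\lambda^2_{i_m}\bigl(1-\lambda^2_{i_\alpha}/\lambda^2_{k_s}\bigr)$. Since $0<\lambda_1<\cdots<\lambda_n$, this eigenvalue is strictly negative precisely when $k_s>i_\alpha$ and strictly positive precisely when $k_s<i_\alpha$; in particular $A$ has no zero eigenvalue, reconfirming hyperbolicity. Hence the stable eigenspace $E^s(O_{i_1\cdots i_m})$ is spanned by the $\partial_{\alpha k_s}$ with $k_s>i_\alpha$ and the unstable eigenspace $E^u(O_{i_1\cdots i_m})$ by those with $k_s<i_\alpha$, which is exactly the spanning description displayed after Lemma 5. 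Comparing with the previous paragraph, the free coordinate directions of $N_s$ coincide with $E^s(O_{i_1\cdots i_m})$ and those of $N_u$ with $E^u(O_{i_1\cdots i_m})$; consequently $T_{O_{i_1\cdots i_m}}N_s=E^s(O_{i_1\cdots i_m})$ and $T_{O_{i_1\cdots i_m}}N_u=E^u(O_{i_1\cdots i_m})$, so $\dim N_s=\dim W^s(O_{i_1\cdots i_m})$ and $\dim N_u=\dim W^u(O_{i_1\cdots i_m})$.

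I would then pass to the restricted flow. On $N_s$ the tangent space $T_{O_{i_1\cdots i_m}}N_s=E^s(O_{i_1\cdots i_m})$ is $A$-invariant and $A$ is negative definite there, so $O_{i_1\cdots i_m}$ is a hyperbolic attracting fixed point of $-\nabla f\mid_{N_s}$; its local stable manifold therefore fills a neighborhood in $N_s$, giving $\dim W^s_{N_s}(O_{i_1\cdots i_m})=\dim N_s=\dim W^s(O_{i_1\cdots i_m})$. Lemma 7 then yields $W^s(O_{i_1\cdots i_m})=W^s_{N_s}(O_{i_1\cdots i_m})\subseteq N_s$, which is precisely a). The unstable case is symmetric after exchanging the roles of $s$ and $u$: on $N_u$ the restricted linear part $A\mid_{E^u}$ is positive definite, so $O_{i_1\cdots i_m}$ is a hyperbolic repeller of $-\nabla f\mid_{N_u}$, whence $\dim W^u_{N_u}(O_{i_1\cdots i_m})=\dim N_u=\dim W^u(O_{i_1\cdots i_m})$, and Lemma 7 gives b).

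The step I expect to demand the most care is the dimension-matching identity $\dim W^s_{N_s}(O_{i_1\cdots i_m})=\dim W^s(O_{i_1\cdots i_m})$, since this is exactly the hypothesis that activates the equality clause of Lemma 7. Everything there reduces to the sign analysis of the Lemma 5 eigenvalues together with the purely combinatorial verification that the index sets $\{(\alpha,k_s):k_s>i_\alpha\}$ and $\{(\alpha,k_s):k_s<i_\alpha\}$ partition the $m(n-m)$ local coordinates and coincide with the spanning sets of $E^s$ and $E^u$; once this bookkeeping is settled, the stable and unstable manifold theorem for a hyperbolic fixed point and Lemma 7 supply the remaining inclusions.
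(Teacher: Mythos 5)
Your proposal is correct and follows essentially the same route as the paper's own proof: both identify the candidate sets as the global invariant manifolds of Lemma 6 containing the critical point, use the diagonal linearization from Lemma 5 to match the tangent space of the invariant manifold with $E^{s}$ (resp. $E^{u}$), deduce the dimension equality $\dim W^{s}_{N}=\dim W^{s}$, and conclude via the equality clause of Lemma 7. The only cosmetic difference is that you phrase the sign analysis in terms of the eigenvalues of the linear part $A$ of $-\nabla f$, whereas the paper labels $E^{s}$ and $E^{u}$ by the sign of the Hessian; these are equivalent since $A=-H_{O_{i_{1}i_{2}\cdots i_{m}}}(f)$.
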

\begin{proof}
Because $O_{i_{1}i_{2}\cdots i_{m}}\in\widetilde{U}_{i_{1}i_{2}\cdots i_{m}} (x_{\beta_{j}k_{s^{j}_{1}}},x_{\beta_{j}k_{s^{j}_{2}}},\cdots,x_{\beta_{j} k_{s^{j}_{t_{j}}}})$, where $j=1,2,\cdots,m$; so $O_{i_{1}i_{2}\cdots i_{m}}$ is a singular point of $-\nabla f\mid_{\widetilde{U}_{i_{1}i_{2}\cdots i_{m}} (x_{\beta_{j}k_{s^{j}_{1}}},x_{\beta_{j}k_{s^{j}_{2}}},\cdots,x_{\beta_{j} k_{s^{j}_{t_{j}}}})}$.\\
By lemma 6., $\widetilde{U}_{i_{1}i_{2}\cdots i_{m}} (x_{\beta_{j}k_{s^{j}_{1}}},x_{\beta_{j}k_{s^{j}_{2}}},\cdots,x_{\beta_{j} k_{s^{j}_{t_{j}}}})$ is a global invariant manifold, so $O_{i_{1}i_{2}\cdots i_{m}}$ is the hyperbolic singular point of $-\nabla f\mid_{\widetilde{U}_{i_{1}i_{2}\cdots i_{m}} (x_{\beta_{j}k_{s^{j}_{1}}},x_{\beta_{j}k_{s^{j}_{2}}},\cdots,x_{\beta_{j} k_{s^{j}_{t_{j}}}})}$ and the linear part at $O_{i_{1}i_{2}\cdots i_{m}}$ is
$$A=-2\lambda^{2}_{i_{1}}\lambda^{2}_{i_{2}}\cdots\lambda^{2}_{i_{m}}\mathrm{diag}\left(1-\frac{\lambda^{2}_{i_{1}}}{\lambda^{2}_{k^{1}_{1}}},\cdots,1-\frac{\lambda^{2}_{i_{1}}}{\lambda^{2}_{k^{1}_{t_{1}}}},
\cdots,1-\frac{\lambda^{2}_{i_{m}}}{\lambda^{2}_{k^{m}_{1}}},\cdots,1-\frac{\lambda^{2}_{i_{m}}}{\lambda^{2}_{k^{m}_{t_{m}}}}\right)$$
where $k^{1}_{t_{p}}=i_{1}+1,i_{1}+2,\cdots,\widehat{i_{2}},\cdots,\widehat{i_{3}},\cdots,\widehat{i_{m}},\cdots,n$;\\
$k^{2}_{t_{p}}=i_{2}+1,i_{2}+2,\cdots,\widehat{i_{3}},\cdots,\widehat{i_{4}},\cdots,\widehat{i_{m}},\cdots,n$; $\cdots\cdots$;$k^{m}_{t_{p}}=i_{m}+1,i_{m}+2,\cdots,n$.\\
Then the positive definite eigenvalue subspace is
$$E^{s}_{\widetilde{U}_{i_{1}i_{2}\cdots i_{m}} (x_{\beta_{j}k_{s^{j}_{1}}},x_{\beta_{j}k_{s^{j}_{2}}},\cdots,x_{\beta_{j} k_{s^{j}_{t_{j}}}})}(O_{i_{1}i_{2}\cdots i_{m}})=$$
$$\rm{span}_{\mathbb{R}}\{\partial_{1(i_{1}+1)},\partial_{1(i_{1}+2)},\cdots,\widehat{\partial_{1i_{2}}},\cdots,\widehat{\partial_{1i_{3}}},\cdots,\widehat{\partial_{1i_{m}}},\cdots,\partial_{1n},$$
$$\partial_{2(i_{2}+1)},\partial_{2(i_{2}+2)},\cdots,\widehat{\partial_{2i_{3}}},\cdots,\widehat{\partial_{2i_{m}}},\cdots,\partial_{2n},$$
$$\partial_{3(i_{3}+1)},\partial_{3(i_{3}+1)},\cdots,\widehat{\partial_{3i_{4}}},\cdots,\widehat{\partial_{3i_{m}}},\cdots,\partial_{3n},$$
$$\cdots,$$
$$\partial_{m(i_{m}+1)},\partial_{m(i_{m}+2)},\cdots,\partial_{mn}\}.$$
So we have $E^{s}_{\widetilde{U}_{i_{1}i_{2}\cdots i_{m}} (x_{\beta_{j}k_{s^{j}_{1}}},x_{\beta_{j}k_{s^{j}_{2}}},\cdots,x_{\beta_{j} k_{s^{j}_{t_{j}}}})}(O_{i_{1}i_{2}\cdots i_{m}})=E^{s}(O_{i_{1}i_{2}\cdots i_{m}})$, and we get
$$\dim W^{s}_{\widetilde{U}_{i_{1}i_{2}\cdots i_{m}} (x_{\beta_{j}k_{s^{j}_{1}}},x_{\beta_{j}k_{s^{j}_{2}}},\cdots,x_{\beta_{j} k_{s^{j}_{t_{j}}}})}(O_{i_{1}i_{2}\cdots i_{m}})=\dim E^{s}_{\widetilde{U}_{i_{1}i_{2}\cdots i_{m}} (x_{\beta_{j}k_{s^{j}_{1}}},x_{\beta_{j}k_{s^{j}_{2}}},\cdots,x_{\beta_{j} k_{s^{j}_{t_{j}}}})}(O_{i_{1}i_{2}\cdots i_{m}})$$ $$=\dim E^{s}(O_{i_{1}i_{2}\cdots i_{m}})=\dim W^{s}(O_{i_{1}i_{2}\cdots i_{m}})$$
by Lemma 7., we have
$$W^{s}(O_{i_{1}i_{2}\cdots i_{m}})\subset\widetilde{U}_{i_{1}i_{2}\cdots i_{m}} (x_{\beta_{j}k_{s^{j}_{1}}},x_{\beta_{j}k_{s^{j}_{2}}},\cdots,x_{\beta_{j} k_{s^{j}_{t_{j}}}});$$
where $j=1,2,\cdots,m$;\\
when $j=1$, let $\beta_{1}=1$, $k_{s^{1}_{p}}=i_{1}+1,i_{1}+2,\cdots,\widehat{i_{2}},\cdots,\widehat{i_{3}},\cdots,\widehat{i_{m}},\cdots,n$;\\
when $j=2$, let $\beta_{2}=2$, $k_{s^{2}_{p}}=i_{2}+1,i_{2}+2,\cdots,\widehat{i_{3}},\cdots,\widehat{i_{4}},\cdots,\widehat{i_{m}},\cdots,n$;\\
$$\cdots\cdots\cdots$$
when $j=m$, let $\beta_{m}=m$, $k_{s^{m}_{p}}=i_{m}+1,i_{m}+2,\cdots,n$.

By the same way, we can get $$W^{u}(O_{i_{1}i_{2}\cdots i_{m}})\subset\widetilde{U}_{i_{1}i_{2}\cdots i_{m}}
(x_{\beta_{j}k_{s^{j}_{1}}},x_{\beta_{j}k_{s^{j}_{2}}},\cdots,x_{\beta_{j} k_{s^{j}_{t_{j}}}});$$
where $j=1,2,\cdots,m$;\\
when $j=1$, let $\beta_{1}=1$, $k_{s^{1}_{p}}=1,2,\cdots,i_{1}-1$;\\
when $j=2$, let $\beta_{2}=2$, $k_{s^{2}_{p}}=1,2,\cdots,\widehat{i_{1}},\cdots,i_{2}-1$;\\
$$\cdots\cdots\cdots$$
when $j=m$, let $\beta_{m}=m$, $k_{s^{m}_{p}}=1,2,\cdots,\widehat{i_{1}},\cdots,\widehat{i_{2}},\cdots,\widehat{i_{m-1}},\cdots,i_{m}-1$.
\end{proof}

\section{The Morse-Smale transversality condition}
In this section we will to proof $f=\frac{\Delta}{\bar{\Delta}}$ is a Morse-Smale funtion.
For all critical points $O_{i_{1}i_{2}\cdots i_{m}}$, $O_{l_{1}l_{2}\cdots l_{m}}$ of $f=\frac{\Delta}{\bar{\Delta}}$ we need to proof the stable and unstable manifolds of $f$ intersect transversally(see [4] and [9]).
\begin{lemma}
Let $O_{i_{1}i_{2}\cdots i_{m}}$, $O_{l_{1}l_{2}\cdots l_{m}}$ be the different critical points of $f$, and let $l_{k}\geq i_{k}$ for some $k\in 1,2,\cdots,m$, then have the following result
$$W^{u}(O_{i_{1}i_{2}\cdots i_{m}})\cap W^{s}(O_{l_{1}l_{2}\cdots l_{m}})=\emptyset$$
\end{lemma}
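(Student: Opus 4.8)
The plan is to argue by contradiction. Suppose some $m$-plane $V$ lies in $W^u(O_{i_1\cdots i_m})\cap W^s(O_{l_1\cdots l_m})$, and extract from Lemma 8 two incompatible incidence conditions on $V$ relative to the opposite coordinate flags
\[ F_j=\operatorname{span}(e_1,\ldots,e_j),\qquad G_j=\operatorname{span}(e_j,e_{j+1},\ldots,e_n). \]
By Lemma 8 b) a point of $W^u(O_{i_1\cdots i_m})$ is represented in the chart $\widetilde U_{i_1\cdots i_m}$ by a matrix whose $\alpha$-th row is $e_{i_\alpha}+\sum_{k<i_\alpha}c_{\alpha k}e_k$, so $V$ has a basis $w_1,\ldots,w_m$ with $w_\alpha\in F_{i_\alpha}$ and leading term $e_{i_\alpha}$. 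Dually, by Lemma 8 a) a point of $W^s(O_{l_1\cdots l_m})$ has in the chart $\widetilde U_{l_1\cdots l_m}$ a basis $u_1,\ldots,u_m$ with $u_\alpha=e_{l_\alpha}+\sum_{k>l_\alpha}d_{\alpha k}e_k\in G_{l_\alpha}$ and lowest term $e_{l_\alpha}$.

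First I would record the dimension estimates these staircase normal forms force. Fix the index $k$ supplied by the hypothesis. The vectors $w_1,\ldots,w_k$ have distinct leading terms $e_{i_1},\ldots,e_{i_k}$ and all lie in $F_{i_k}$, hence are independent, giving
\[ \dim\bigl(V\cap F_{i_k}\bigr)\ge k. \]
Symmetrically $u_k,\ldots,u_m$ have distinct lowest terms $e_{l_k},\ldots,e_{l_m}$ and all lie in $G_{l_k}$, giving
\[ \dim\bigl(V\cap G_{l_k}\bigr)\ge m-k+1. \]

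I would then combine these inside the $m$-dimensional space $V$. From $\dim(A\cap B)=\dim A+\dim B-\dim(A+B)$ and $A+B\subseteq V$ applied to $A=V\cap F_{i_k}$, $B=V\cap G_{l_k}$,
\[ \dim\bigl(V\cap F_{i_k}\cap G_{l_k}\bigr)\ge k+(m-k+1)-m=1, \]
so $V$ meets $F_{i_k}\cap G_{l_k}=\operatorname{span}(e_{l_k},e_{l_k+1},\ldots,e_{i_k})$ nontrivially. In the decisive case $l_k>i_k$ this coordinate block is $\{0\}$, a contradiction; hence no such $V$ exists and the intersection is empty.

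The main obstacle is not the closing linear-algebra count but the passage from Lemma 8 to the global, basis-free conditions $\dim(V\cap F_{i_k})\ge k$ and $\dim(V\cap G_{l_k})\ge m-k+1$: Lemma 8 only places $W^u$ and $W^s$ inside coordinate invariant manifolds described in a single chart, so I must verify that every point of these invariant manifolds is genuinely an $m$-plane carrying the claimed staircase basis, and that the two charts $\widetilde U_{i_1\cdots i_m}$ and $\widetilde U_{l_1\cdots l_m}$ are being used for one and the same plane $V$ (so both incidence bounds hold simultaneously). I would also note that the argument delivers emptiness precisely when $l_k>i_k$ for some $k$; when $l_k=i_k$ the count only forces $e_{i_k}\in V$, which is realisable, so the hypothesis must be read with the strict inequality for the conclusion to hold.
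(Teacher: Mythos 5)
Your argument is correct and in spirit follows the same route as the paper: both proofs reduce the claim to showing that the staircase cell containing $W^{u}(O_{i_{1}\cdots i_{m}})$ (Lemma 8 b)) is disjoint from a coordinate-described set containing $W^{s}(O_{l_{1}\cdots l_{m}})$. The differences are worth recording. The paper flows a putative connecting orbit forward until it enters $\widetilde{U}_{l_{1}\cdots l_{m}}$ and then simply \emph{asserts} that $\widetilde{U}_{l_{1}\cdots l_{m}}\cap\widetilde{U}_{i_{1}\cdots i_{m}}(\cdots)=\emptyset$; you instead work directly at the point $p$, use Lemma 8 a) as well, and actually prove the disjointness by the flag-incidence count $\dim(V\cap F_{i_{k}})\ge k$, $\dim(V\cap G_{l_{k}})\ge m-k+1$, so $V$ meets $F_{i_{k}}\cap G_{l_{k}}$, which is zero when $l_{k}>i_{k}$. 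That count is exactly the justification the paper omits, and it is correct. Your closing caveat is also right, and important: with $l_{k}=i_{k}$ the disjointness genuinely fails (e.g.\ $i=(1,3)$, $l=(1,2)$: the plane spanned by $e_{1}$ and $ce_{2}+e_{3}$, $c\neq 0$, lies in $\widetilde{U}_{13}(x_{22})\cap\widetilde{U}_{12}$, and indeed Theorem 2 of the paper exhibits two trajectories from $O_{13}$ to $O_{12}$), so the lemma as stated with ``$l_{k}\ge i_{k}$'' is false and must be read with strict inequality; correspondingly the deduction ``$l_{\alpha}<i_{\alpha}$ for all $\alpha$'' at the start of Lemma 10 only follows in the weaker form $l_{\alpha}\le i_{\alpha}$. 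Your concern about transporting the two chart descriptions to ``one and the same plane $V$'' is easily discharged, since both staircase matrices represent the same point of the Grassmannian; nothing further is needed there.
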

\begin{proof}
If $W^{u}(O_{i_{1}i_{2}\cdots i_{m}})\cap W^{s}(O_{l_{1}l_{2}\cdots l_{m}})\neq\emptyset$, there is $p\in W^{u}(O_{i_{1}i_{2}\cdots i_{m}})\cap W^{s}(O_{l_{1}l_{2}\cdots l_{m}})$ and $\exists\varphi(t)(-\infty<t<+\infty)$ is the solution of $\dot{x}=-\nabla f$, with $\varphi(0)=p$ and
$$\lim_{t\rightarrow+\infty}\varphi(t)=O_{l_{1}l_{2}\cdots l_{m}},\ \lim_{t\rightarrow-\infty}\varphi(t)=O_{i_{1}i_{2}\cdots i_{m}}$$
so $\varphi(t)\in W^{u}(O_{i_{1}i_{2}\cdots i_{m}})\cap W^{s}(O_{l_{1}l_{2}\cdots l_{m}})$.\\

Because $O_{l_{1}l_{2}\cdots l_{m}}\in\widetilde{U}_{l_{1}l_{2}\cdots l_{m}}$, so $\exists t_{0}>0$ with $t>t_{0}$,$\varphi(t)\in\widetilde{U}_{l_{1}l_{2}\cdots l_{m}}$. By Lemma 8., we have
$$\varphi(t)\in\widetilde{U}_{l_{1}l_{2}\cdots l_{m}}\cap\widetilde{U}_{i_{1}i_{2}\cdots i_{m}}
(x_{\beta_{j}k_{s^{j}_{1}}},x_{\beta_{j}k_{s^{j}_{2}}},\cdots,x_{\beta_{j} k_{s^{j}_{t_{j}}}}), \ t>t_{0},$$
where $j=1,2,\cdots,m$;\\
when $j=1$, let $\beta_{1}=1$, $k_{s^{1}_{p}}=1,2,\cdots,i_{1}-1$;\\
when $j=2$, let $\beta_{2}=2$, $k_{s^{2}_{p}}=1,2,\cdots,\widehat{i_{1}},\cdots,i_{2}-1$;\\
$$\cdots\cdots\cdots$$
when $j=m$, let $\beta_{m}=m$, $k_{s^{m}_{p}}=1,2,\cdots,\widehat{i_{1}},\cdots,\widehat{i_{2}},\cdots,\widehat{i_{m-1}},\cdots,i_{m}-1$.

But because $l_{k}\geq i_{k}$ for some $k\in 1,2,\cdots,m$, so we have
$$\widetilde{U}_{l_{1}l_{2}\cdots l_{m}}\cap\widetilde{U}_{i_{1}i_{2}\cdots i_{m}}
(x_{\beta_{j}k_{s^{j}_{1}}},x_{\beta_{j}k_{s^{j}_{2}}},\cdots,x_{\beta_{j} k_{s^{j}_{t_{j}}}})=\emptyset,$$
where $j=1,2,\cdots,m$;\\
when $j=1$, let $\beta_{1}=1$, $k_{s^{1}_{p}}=1,2,\cdots,i_{1}-1$;\\
when $j=2$, let $\beta_{2}=2$, $k_{s^{2}_{p}}=1,2,\cdots,\widehat{i_{1}},\cdots,i_{2}-1$;\\
$$\cdots\cdots\cdots$$
when $j=m$, let $\beta_{m}=m$, $k_{s^{m}_{p}}=1,2,\cdots,\widehat{i_{1}},\cdots,\widehat{i_{2}},\cdots,\widehat{i_{m-1}},\cdots,i_{m}-1$.

Then we get $$W^{u}(O_{i_{1}i_{2}\cdots i_{m}})\cap W^{s}(O_{l_{1}l_{2}\cdots l_{m}})=\emptyset$$.
\end{proof}

\begin{lemma}
The function $f=\frac{\Delta}{\bar{\Delta}}$ is Morse-Smale funtion.
\end{lemma}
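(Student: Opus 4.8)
Recall that a Morse function is called \emph{Morse--Smale} when, for every pair of critical points, the unstable manifold of one and the stable manifold of the other intersect transversally. By Lemma 2 and Lemma 3 the function $f=\frac{\Delta}{\bar{\Delta}}$ is already Morse, so the plan is to verify the transversality condition
\[
T_{p}W^{u}(O_{i_{1}\cdots i_{m}})+T_{p}W^{s}(O_{l_{1}\cdots l_{m}})=T_{p}G_{n,m}(\mathbb{R})
\]
at every $p\in W^{u}(O_{i_{1}\cdots i_{m}})\cap W^{s}(O_{l_{1}\cdots l_{m}})$, ranging over all pairs of critical points. When the two critical points coincide the intersection is the single point $O_{i_{1}\cdots i_{m}}$, where the two tangent spaces are $E^{u}$ and $E^{s}$, whose direct sum is the whole tangent space by Lemma 5, so transversality is immediate. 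When the points are distinct, Lemma 9 shows the intersection is empty as soon as $l_{k}\geq i_{k}$ for some $k$, and for an empty intersection transversality holds vacuously. Hence only the case $l_{k}<i_{k}$ for every $k=1,\dots,m$ remains, and this is where the real work lies.

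For this remaining case I would first reduce the verification to one convenient point on each connecting orbit. Since $W^{u}(O_{i_{1}\cdots i_{m}})$ and $W^{s}(O_{l_{1}\cdots l_{m}})$ are invariant under the flow $\varphi_{t}$ and each $\varphi_{t}$ is a diffeomorphism, transversality at $p$ is equivalent to transversality at $\varphi_{t}(p)$ for any $t$. By Lemma 8 the whole unstable manifold $W^{u}(O_{i_{1}\cdots i_{m}})$ lies inside the chart $\widetilde{U}_{i_{1}\cdots i_{m}}$, while for large $t$ the point $\varphi_{t}(p)$ lies in $\widetilde{U}_{l_{1}\cdots l_{m}}$; thus, replacing $p$ by a suitable $\varphi_{t}(p)$, I may assume the intersection point $q$ lies in the overlap $\widetilde{U}_{i_{1}\cdots i_{m}}\cap\widetilde{U}_{l_{1}\cdots l_{m}}$. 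Combining Lemma 7 with the dimension count underlying Lemma 8, $W^{u}(O_{i_{1}\cdots i_{m}})$ coincides with the global invariant coordinate manifold of Lemma 6 carrying the unstable directions, so that $T_{q}W^{u}(O_{i_{1}\cdots i_{m}})$ is exactly the span of $\{\partial_{\alpha k}:k<i_{\alpha}\}$ in the $\widetilde{U}_{i_{1}\cdots i_{m}}$--coordinates; symmetrically $T_{q}W^{s}(O_{l_{1}\cdots l_{m}})$ is the span of the stable directions $\{\partial_{\alpha k}:k>l_{\alpha}\}$ in the $\widetilde{U}_{l_{1}\cdots l_{m}}$--coordinates.

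The heart of the proof is then a single linear-algebra computation: write the transition diffeomorphism $\widetilde{\phi}_{l_{1}\cdots l_{m}}\circ\widetilde{\phi}_{i_{1}\cdots i_{m}}^{-1}$ explicitly near $q$, differentiate it, and check that the image of the unstable coordinate subspace together with the stable coordinate subspace spans $\mathbb{R}^{m(n-m)}$. Conceptually this is transparent: a plane in $W^{u}(O_{i_{1}\cdots i_{m}})$ has a basis $\xi_{\alpha}\in\mathrm{span}(e_{1},\dots,e_{i_{\alpha}})$ with leading entry $e_{i_{\alpha}}$, while a plane in $W^{s}(O_{l_{1}\cdots l_{m}})$ has a basis $\eta_{\alpha}\in\mathrm{span}(e_{l_{\alpha}},\dots,e_{n})$ with leading entry $e_{l_{\alpha}}$, so the two families are the cells attached to a flag and to its opposite flag, and the hypothesis $l_{k}<i_{k}$ for all $k$ is precisely what makes their supports interlock so that the combined tangent directions fill $T_{q}G_{n,m}(\mathbb{R})$. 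The main obstacle I expect is organizing this Jacobian computation cleanly, since the transition map mixes the coordinates through the inversion of the $m\times m$ block sitting in columns $l_{1},\dots,l_{m}$, and one must check that the resulting linear map preserves the complementarity of the two coordinate subspaces. A dimension count, using $\dim W^{u}+\dim W^{s}=m(n-m)+\sum_{k}(i_{k}-l_{k})$, reduces the task to proving that $T_{q}W^{u}\cap T_{q}W^{s}$ has the expected dimension $\sum_{k}(i_{k}-l_{k})$, which is the concrete identity I would establish to close the argument.
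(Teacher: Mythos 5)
Your overall strategy is the paper's: dispose of the degenerate cases, invoke Lemma 9 to reduce to the situation $l_{k}<i_{k}$ for all $k$, identify $T_{p}W^{u}(O_{i_{1}\cdots i_{m}})$ and $T_{p}W^{s}(O_{l_{1}\cdots l_{m}})$ with coordinate subspaces coming from Lemmas 6--8, and then show that these two subspaces together span $T_{p}G_{n,m}(\mathbb{R})$. But the proposal stops exactly where the content of the lemma begins: the spanning statement is announced as ``a single linear-algebra computation'' and ``the concrete identity I would establish to close the argument,'' and neither the transition Jacobian nor the dimension of $T_{q}W^{u}\cap T_{q}W^{s}$ is actually computed. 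As written, the transversality condition in the only nontrivial case is asserted as a plan, not proved, so there is a genuine gap.

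For comparison, the paper closes this step not with a Jacobian computation but with an index-counting observation: in row $\alpha$ the unstable frame supplies the directions $\partial_{\alpha k}$ with $k<i_{\alpha}$ (the hatted entries removed), while the stable frame supplies $\partial_{\alpha k}$ with $k>l_{\alpha}$; since $l_{\alpha}<i_{\alpha}$ for every $\alpha$, these two ranges cover every coordinate direction of $T_{p}G_{n,m}(\mathbb{R})$, giving $T_{p}G_{n,m}(\mathbb{R})\subset T_{p}W^{u}+T_{p}W^{s}$, and the reverse inclusion is automatic. Note that the paper silently treats the $\widetilde{U}_{i_{1}\cdots i_{m}}$-frame and the $\widetilde{U}_{l_{1}\cdots l_{m}}$-frame as interchangeable at $p$, which is precisely the chart-transition issue you single out as the ``main obstacle.'' You have therefore correctly located the delicate point of the argument, but a complete proof must either carry out that transition-map computation (and check it maps the unstable coordinate subspace onto something still complementary to the stable one) or explain why the change of chart is harmless; your write-up does neither, so the lemma is not yet established.
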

\begin{proof}
Let $O_{i_{1}i_{2}\cdots i_{m}}$, $O_{l_{1}l_{2}\cdots l_{m}}$ be any two different critical points of $f$, and with
$$W^{u}(O_{i_{1}i_{2}\cdots i_{m}})\cap W^{s}(O_{l_{1}l_{2}\cdots l_{m}})\neq\emptyset.$$
By the lemma 9., we know that $l_{1}<i_{1},l_{2}<i_{2},\cdots,l_{m}<i_{m}$. Let $p\in W^{u}(O_{i_{1}i_{2}\cdots i_{m}})\cap W^{s}(O_{l_{1}l_{2}\cdots l_{m}})$, the tangent space of $W^{u}(O_{i_{1}i_{2}\cdots i_{m}})$ at $p$ is
$$T_{p}W^{u}(O_{i_{1}i_{2}\cdots i_{m}})={\rm{span}}_{\mathbb{R}}\{\partial_{11},\partial_{12},\cdots,\partial_{1(i_{1}-1)},$$
$$\partial_{21},\partial_{22},\cdots,\widehat{\partial_{2i_{1}}},\cdots,\partial_{2(i_{2}-1)},$$
$$\partial_{31},\partial_{32},\cdots,\widehat{\partial_{3i_{1}}},\cdots,\widehat{\partial_{3i_{2}}},\cdots,\partial_{3(i_{3}-1)},$$
$$\cdots,$$
$$\partial_{m1},\partial_{m2},\cdots,\widehat{\partial_{mi_{1}}},\cdots,\widehat{\partial_{mi_{2}}},\cdots,\widehat{\partial_{mi_{3}}},\cdots,\widehat{\partial_{mi_{m-1}}},\cdots,\partial_{m(i_{m}-1)}\};$$
The tangant space of $W^{s}(O_{l_{1}l_{2}\cdots l_{m}})$ at $p$ is
$$T_{p}W^{s}(O_{l_{1}l_{2}\cdots l_{m}})={\rm{span}}_{\mathbb{R}}\{\partial_{1(l_{1}+1)},\partial_{1(l_{1}+2)},\cdots,\widehat{\partial_{1l_{2}}},\cdots,\widehat{\partial_{1l_{3}}},\cdots,\widehat{\partial_{1l_{m}}},\cdots,\partial_{1n},$$
$$\partial_{2(l_{2}+1)},\partial_{2(l_{2}+2)},\cdots,\widehat{\partial_{2l_{3}}},\cdots,\widehat{\partial_{2l_{m}}},\cdots,\partial_{2n},$$
$$\partial_{3(l_{3}+1)},\partial_{3(l_{3}+1)},\cdots,\widehat{\partial_{3l_{4}}},\cdots,\widehat{\partial_{3l_{m}}},\cdots,\partial_{3n},$$
$$\cdots,$$
$$\partial_{m(l_{m}+1)},\partial_{m(l_{m}+2)},\cdots,\partial_{mn}\}.$$
So we have $$T_{p}G_{n,m}(\mathbb{R})\subset T_{p}W^{u}(O_{i_{1}i_{2}\cdots i_{m}})+T_{p}W^{s}(O_{l_{1}l_{2}\cdots l_{m}})$$
and because $$W^{u}(O_{i_{1}i_{2}\cdots i_{m}})\subset G_{n,m}(\mathbb{R}),\ W^{s}(O_{l_{1}l_{2}\cdots l_{m}})\subset G_{n,m}(\mathbb{R})$$
so $$T_{p}G_{n,m}(\mathbb{R})\supset T_{p}W^{u}(O_{i_{1}i_{2}\cdots i_{m}})+T_{p}W^{s}(O_{l_{1}l_{2}\cdots l_{m}})$$
Then we get
$$T_{p}G_{n,m}(\mathbb{R})=T_{p}W^{u}(O_{i_{1}i_{2}\cdots i_{m}})+T_{p}W^{s}(O_{l_{1}l_{2}\cdots l_{m}}).$$
It means that the stable and unstable manifolds of $f$ intersect transversally, $f=\frac{\Delta}{\bar{\Delta}}$ is a Morse-Smale funtion.

\end{proof}

\section{The trajectories connect the critical points}
\begin{theorem}
Let $O_{i_{1}i_{2}\cdots i_{m}}$, $O_{l_{1}l_{2}\cdots l_{m}}$ be any two different critical points of $f$, with
$${\rm{ind}}(O_{i_{1}i_{2}\cdots i_{m}})-{\rm{ind}}(O_{l_{1}l_{2}\cdots l_{m}})=1,$$
and $i_{k}-l_{k}>1$ for some $k\in 1,2,\cdots,m$; then $$W^{u}(O_{i_{1}i_{2}\cdots i_{m}})\cap W^{s}(O_{l_{1}l_{2}\cdots l_{m}})=\emptyset$$
\end{theorem}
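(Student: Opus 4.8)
The plan is to reduce the statement to a purely arithmetic observation about the two index sequences, after which all of the geometric content is supplied by Lemma 9. The natural starting point is the index formula from Lemma 3, namely $\mathrm{ind}(O_{i_{1}i_{2}\cdots i_{m}}) = i_{1}+i_{2}+\cdots+i_{m}-\frac{1}{2}m(m+1)$. Since the normalising constant $\frac{1}{2}m(m+1)$ depends only on $m$, it cancels in a difference, so the hypothesis $\mathrm{ind}(O_{i_{1}\cdots i_{m}})-\mathrm{ind}(O_{l_{1}\cdots l_{m}})=1$ is equivalent to $\sum_{k=1}^{m}(i_{k}-l_{k})=1$. I would record this equivalence first, as it turns the index condition into a single linear constraint on the integers $i_{k}-l_{k}$.

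Next I would exploit the remaining hypothesis, that $i_{k_{0}}-l_{k_{0}}>1$ for some index $k_{0}$. Because all the $i_{k}$ and $l_{k}$ are integers, this forces $i_{k_{0}}-l_{k_{0}}\geq 2$. Combining this with the constraint $\sum_{k}(i_{k}-l_{k})=1$ gives $\sum_{k\neq k_{0}}(i_{k}-l_{k})=1-(i_{k_{0}}-l_{k_{0}})\leq -1$. A finite sum of integers that is strictly negative must contain at least one strictly negative summand, so there is an index $k_{1}$ (necessarily $k_{1}\neq k_{0}$) with $i_{k_{1}}-l_{k_{1}}<0$, that is $l_{k_{1}}>i_{k_{1}}$, and in particular $l_{k_{1}}\geq i_{k_{1}}$.

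With this $k_{1}$ in hand the proof finishes at once. The two critical points are distinct by hypothesis and satisfy $l_{k_{1}}\geq i_{k_{1}}$ for the single index $k_{1}$, which is precisely the hypothesis of Lemma 9. Applying Lemma 9 then yields $W^{u}(O_{i_{1}i_{2}\cdots i_{m}})\cap W^{s}(O_{l_{1}l_{2}\cdots l_{m}})=\emptyset$, as required.

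I do not expect a serious obstacle. Once the index condition is rewritten as $\sum_{k}(i_{k}-l_{k})=1$, the deduction that some coordinate of $l$ strictly exceeds the corresponding coordinate of $i$ is a one-line pigeonhole argument, and all of the genuine dynamical work, namely that such a coordinate inequality forbids any connecting orbit, has already been done in Lemma 9. The only point that needs a little care is the integrality step $i_{k_{0}}-l_{k_{0}}>1\Rightarrow i_{k_{0}}-l_{k_{0}}\geq 2$: it is exactly this that makes the remaining differences sum to a strictly negative integer and thereby guarantees a genuinely negative summand rather than merely a nonpositive one, which is what is needed to trigger Lemma 9.
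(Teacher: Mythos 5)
Your proof is correct and follows essentially the same route as the paper: the paper likewise reduces the index hypothesis to $\sum_{k}(i_{k}-l_{k})=1$, combines it with $i_{k_{0}}-l_{k_{0}}\geq 2$ to produce an index $j$ with $l_{j}>i_{j}$, and then concludes that $W^{u}(O_{i_{1}\cdots i_{m}})\cap W^{s}(O_{l_{1}\cdots l_{m}})=\emptyset$, the only difference being that the paper re-runs the invariant-manifold/trajectory argument of Lemma 9 inline while you cite Lemma 9 as a black box. Your citation is legitimate, and since you actually establish the strict inequality $l_{k_{1}}>i_{k_{1}}$ rather than merely $l_{k_{1}}\geq i_{k_{1}}$, your argument is robust to either reading of the hypothesis of Lemma 9.
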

\begin{proof}
If $W^{u}(O_{i_{1}i_{2}\cdots i_{m}})\cap W^{s}(O_{l_{1}l_{2}\cdots l_{m}})\neq\emptyset$, there is $p\in W^{u}(O_{i_{1}i_{2}\cdots i_{m}})\cap W^{s}(O_{l_{1}l_{2}\cdots l_{m}})$ and $\exists\varphi(t)(-\infty<t<+\infty)$ is the solution of $\dot{x}=-\nabla f$, with $\varphi(0)=p$ and
$$\lim_{t\rightarrow+\infty}\varphi(t)=O_{l_{1}l_{2}\cdots l_{m}},\ \lim_{t\rightarrow-\infty}\varphi(t)=O_{i_{1}i_{2}\cdots i_{m}}$$
so $\varphi(t)\in W^{u}(O_{i_{1}i_{2}\cdots i_{m}})\cap W^{s}(O_{l_{1}l_{2}\cdots l_{m}})$.\\

Because $O_{l_{1}l_{2}\cdots l_{m}}\in\widetilde{U}_{l_{1}l_{2}\cdots l_{m}}$, so $\exists t_{0}>0$ with $t>t_{0}$,$\varphi(t)\in\widetilde{U}_{l_{1}l_{2}\cdots l_{m}}$. By Lemma 8., we have
$$\varphi(t)\in\widetilde{U}_{l_{1}l_{2}\cdots l_{m}}\cap\widetilde{U}_{i_{1}i_{2}\cdots i_{m}}
(x_{\beta_{j}k_{s^{j}_{1}}},x_{\beta_{j}k_{s^{j}_{2}}},\cdots,x_{\beta_{j} k_{s^{j}_{t_{j}}}}), \ t>t_{0},$$
where $j=1,2,\cdots,m$;\\
when $j=1$, let $\beta_{1}=1$, $k_{s^{1}_{p}}=1,2,\cdots,i_{1}-1$;\\
when $j=2$, let $\beta_{2}=2$, $k_{s^{2}_{p}}=1,2,\cdots,\widehat{i_{1}},\cdots,i_{2}-1$;\\
$$\cdots\cdots\cdots$$
when $j=m$, let $\beta_{m}=m$, $k_{s^{m}_{p}}=1,2,\cdots,\widehat{i_{1}},\cdots,\widehat{i_{2}},\cdots,\widehat{i_{m-1}},\cdots,i_{m}-1$.\\
Because ${\rm{ind}}(O_{i_{1}i_{2}\cdots i_{m}})-{\rm{ind}}(O_{l_{1}l_{2}\cdots l_{m}})=1$, so we have $i_{1}+i_{2}+\cdots+i_{m}-(l_{1}+l_{2}+\cdots+l_{m})=1$, and by $i_{k}-l_{k}>1$ for some $k\in 1,2,\cdots,m$, we get there is a $j\neq k$, $j\in 1,2,\cdots,m$ with $l_{j}>i_{j}$. By it, we have
$$\widetilde{U}_{l_{1}l_{2}\cdots l_{m}}\cap\widetilde{U}_{i_{1}i_{2}\cdots i_{m}}
(x_{\beta_{j}k_{s^{j}_{1}}},x_{\beta_{j}k_{s^{j}_{2}}},\cdots,x_{\beta_{j} k_{s^{j}_{t_{j}}}})=\emptyset,$$
where $j=1,2,\cdots,m$;\\
when $j=1$, let $\beta_{1}=1$, $k_{s^{1}_{p}}=1,2,\cdots,i_{1}-1$;\\
when $j=2$, let $\beta_{2}=2$, $k_{s^{2}_{p}}=1,2,\cdots,\widehat{i_{1}},\cdots,i_{2}-1$;\\
$$\cdots\cdots\cdots$$
when $j=m$, let $\beta_{m}=m$, $k_{s^{m}_{p}}=1,2,\cdots,\widehat{i_{1}},\cdots,\widehat{i_{2}},\cdots,\widehat{i_{m-1}},\cdots,i_{m}-1$.
which contradicts to
$$\varphi(t)\in\widetilde{U}_{l_{1}l_{2}\cdots l_{m}}\cap\widetilde{U}_{i_{1}i_{2}\cdots i_{m}}
(x_{\beta_{j}k_{s^{j}_{1}}},x_{\beta_{j}k_{s^{j}_{2}}},\cdots,x_{\beta_{j} k_{s^{j}_{t_{j}}}}), \ t>t_{0}.$$
So we get $$W^{u}(O_{i_{1}i_{2}\cdots i_{m}})\cap W^{s}(O_{l_{1}l_{2}\cdots l_{m}})=\emptyset.$$

\end{proof}

By Theorem 1., we know that if ${\rm{ind}}(O_{i_{1}i_{2}\cdots i_{m}})-{\rm{ind}}(O_{l_{1}l_{2}\cdots l_{m}})=1$ and $W^{u}(O_{i_{1}i_{2}\cdots i_{m}})\cap W^{s}(O_{l_{1}l_{2}\cdots l_{m}})\neq\emptyset$, so $\exists k$ with $i_{k}-l_{k}=1$, where $k\in 1,2,\cdots,m$.

\begin{theorem}
Let $O_{i_{1}i_{2}\cdots i_{m}}$, $O_{l_{1}l_{2}\cdots l_{m}}$ be any two different critical points of $f$, with
$${\rm{ind}}(O_{i_{1}i_{2}\cdots i_{m}})-{\rm{ind}}(O_{l_{1}l_{2}\cdots l_{m}})=1,$$
Then
$$W^{u}(O_{i_{1}i_{2}\cdots i_{m}})\cap W^{s}(O_{i_{1}i_{2}\cdots (i_{k}-1)\cdots i_{m}})=\widetilde{U}_{i_{1}i_{2}\cdots i_{m}}(x_{k(i_{k}-1)};x_{k(i_{k}-1)}\neq 0)$$

\end{theorem}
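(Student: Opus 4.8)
The plan is to prove the set equality by establishing the two reverse inclusions separately. By the remark following Theorem~1, the hypothesis that the index difference is $1$ together with a nonempty intersection forces the two critical points to differ in exactly one position, say the $k$-th, with $i_k - l_k = 1$ and $l_j = i_j$ for all $j \neq k$; this is precisely the situation described by $O_{l_1 l_2 \cdots l_m} = O_{i_1 i_2 \cdots (i_k-1) \cdots i_m}$. So I may assume from the outset that the lower critical point is exactly this one. First I would establish the inclusion
$$W^{u}(O_{i_{1}i_{2}\cdots i_{m}})\cap W^{s}(O_{i_{1}i_{2}\cdots (i_{k}-1)\cdots i_{m}})\subset\widetilde{U}_{i_{1}i_{2}\cdots i_{m}}(x_{k(i_{k}-1)};x_{k(i_{k}-1)}\neq 0).$$

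\textbf{The forward inclusion via the invariant-manifold trap.}
To prove the inclusion above, I would take a trajectory $\varphi(t)$ with $\varphi(t)\to O_{i_1\cdots i_m}$ as $t\to-\infty$ and $\varphi(t)\to O_{i_1\cdots(i_k-1)\cdots i_m}$ as $t\to+\infty$. Applying Lemma~8(b) to the source critical point $O_{i_1\cdots i_m}$ confines $\varphi$ to the global invariant manifold $\widetilde{U}_{i_1\cdots i_m}(\cdots)$ spanned by the unstable coordinates of the source; applying Lemma~8(a) to the target $O_{i_1\cdots(i_k-1)\cdots i_m}$ confines $\varphi$ (for large $t$, and hence by invariance for all $t$) to the stable slice of the target. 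Intersecting these two coordinate slices is the crux: the source unstable directions are the $x_{\alpha j}$ with $j < i_\alpha$ (minus overlaps), while the target stable directions are the $x_{\alpha j}$ with $j > l_\alpha = i_\alpha$ except for the $k$-th row where the bound is $i_k - 1$. The only surviving common free coordinate is $x_{k(i_k-1)}$, which forces $\varphi$ into the one-dimensional slice $\widetilde{U}_{i_1\cdots i_m}(x_{k(i_k-1)})$. Nondegeneracy of this trajectory (it is a genuine connecting orbit, not the constant at a critical point) gives $x_{k(i_k-1)}\neq 0$, completing this inclusion.

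\textbf{The reverse inclusion by direct flow analysis.}
For the opposite inclusion I would restrict the gradient flow to the invariant one-dimensional manifold $\widetilde{U}_{i_1\cdots i_m}(x_{k(i_k-1)})$, on which only the single coordinate $u=x_{k(i_k-1)}$ varies. Using the explicit formula~(4) for $-\nabla f$, specialized by setting all other coordinates to zero, the flow becomes a scalar autonomous ODE $\dot u = c\,u + O(u^2)$ whose linear coefficient is the corresponding diagonal entry $-2\lambda^2_{i_1}\cdots\lambda^2_{i_m}\bigl(1-\lambda^2_{i_k}/\lambda^2_{i_k-1}\bigr)$ from Lemma~3; since $\lambda_{i_k-1}<\lambda_{i_k}$, this coefficient is positive, so $u=0$ is a source on this line. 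Every point with $u\neq 0$ therefore flows backward to $u=0$, i.e.\ to $O_{i_1\cdots i_m}$, so it lies in $W^u(O_{i_1\cdots i_m})$; and I must check that it flows \emph{forward} to $O_{i_1\cdots(i_k-1)\cdots i_m}$. This last point is the main obstacle: I would verify it by identifying, in the coordinate chart $\widetilde{U}_{i_1\cdots(i_k-1)\cdots i_m}$, that the image point $\widetilde{\phi}^{-1}_{i_1\cdots i_m}(0,\dots,u,\dots,0)$ has all its local coordinates equal to zero except the one indexed by the original position $i_k$ (a change-of-chart computation exchanging the pivot column $i_k-1$ with column $i_k$), so that as $u\to\infty$ along the line the point limits onto the origin of the target chart, which is exactly $O_{i_1\cdots(i_k-1)\cdots i_m}$. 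Hence the whole slice minus its origin is a single connecting orbit, giving the reverse inclusion and the theorem.
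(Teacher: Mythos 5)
Your forward inclusion is exactly the paper's argument: apply Lemma 8 to both endpoints, write the point's coordinates in the two charts $\widetilde{U}_{i_{1}\cdots i_{m}}$ and $\widetilde{U}_{i_{1}\cdots (i_{k}-1)\cdots i_{m}}$, and observe that the only coordinate free in both slices is $x_{k(i_{k}-1)}$, which the chart change $x_{k(i_{k}-1)}=1/y_{ki_{k}}$ forces to be nonzero. The paper stops there and simply asserts the equality; your reverse inclusion (restricting the flow to the one-dimensional invariant slice, checking via Lemma 5 that the linearization coefficient $-2\lambda^{2}_{i_{1}}\cdots\lambda^{2}_{i_{m}}\bigl(1-\lambda^{2}_{i_{k}}/\lambda^{2}_{i_{k}-1}\bigr)$ is positive so the origin is a source, and passing to the target chart to identify the forward limit) is a genuine addition that closes a step the paper leaves implicit, and it is correct.
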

\begin{proof}
By Lemma 8., $\forall p\in W^{u}(O_{i_{1}i_{2}\cdots i_{m}})\cap W^{s}(O_{i_{1}i_{2}\cdots (i_{k}-1)\cdots i_{m}})$, the coordinate in $\widetilde{U}_{i_{1}i_{2}\cdots i_{m}}$ is
$$(x_{11}(p),x_{12}(p),\cdots,x_{1(i_{1}-1)}(p),1,$$
$$x_{21}(p),x_{22}(p),\cdots,\widehat{x_{2i_{1}}(p)},\cdots,x_{2(i_{2}-1)}(p),1,$$
$$x_{31}(p),x_{32}(p),\cdots,\widehat{x_{3i_{1}}(p)},\cdots,\widehat{x_{3i_{2}}(p)},\cdots,x_{3(i_{3}-1)}(p),1,$$
$$\cdots,$$
$$x_{k1}(p),x_{k2}(p),\cdots,\widehat{x_{ki_{1}}(p)},\cdots,\widehat{x_{ki_{2}}(p)},\cdots,\widehat{x_{ki_{3}}(p)},\cdots,\widehat{x_{ki_{k-1}}(p)},\cdots,x_{k(i_{k}-1)}(p),1)$$
$$\cdots,$$
$$x_{m1}(p),x_{m2}(p),\cdots,\widehat{x_{mi_{1}}(p)},\cdots,\widehat{x_{mi_{2}}(p)},\cdots,\widehat{x_{mi_{3}}(p)},\cdots,\widehat{x_{mi_{m-1}}(p)},\cdots,x_{m(i_{m}-1)}(p),1);$$
the coordinate in $\widetilde{U}_{i_{1}i_{2}\cdots (i_{k}-1)\cdots i_{m}}$ is
$$(1,y_{1(i_{1}+1)}(p),y_{1(i_{1}+2)}(p),\cdots,\widehat{y_{1i_{2}}(p)},\cdots,\widehat{y_{1i_{3}}(p)},\cdots,\widehat{y_{1i_{m}}(p)},\cdots,y_{1n}(p),$$
$$1,y_{2(i_{2}+1)}(p),y_{2(i_{2}+2)}(p),\cdots,\widehat{y_{2i_{3}}(p)},\cdots,\widehat{y_{2i_{m}}(p)},\cdots,y_{2n}(p),$$
$$1,y_{3(i_{3}+1)}(p),y_{3(i_{3}+1)}(p),\cdots,\widehat{y_{3i_{4}}(p)},\cdots,\widehat{y_{3i_{m}}(p)},\cdots,y_{3n}(p),$$
$$\cdots,$$
$$1,y_{ki_{k}}(p),y_{k(i_{k}+1)}(p),\cdots,\widehat{y_{ki_{k+1}}(p)},\cdots,\widehat{y_{ki_{m}}(p)},\cdots,y_{kn}(p),$$
$$\cdots,$$
$$1,y_{m(i_{m}+1)}(p),y_{m(i_{m}+2)}(p),\cdots,y_{mn}(p)).$$
So the change of the coordinate is $$x_{k(i_{k}-1)}(p)=\frac{1}{y_{ki_{k}}(p)}\neq 0, \ 1=\frac{y_{ki_{k}}(p)}{y_{ki_{k}}(p)};$$
Then we get $$W^{u}(O_{i_{1}i_{2}\cdots i_{m}})\cap W^{s}(O_{i_{1}i_{2}\cdots (i_{k}-1)\cdots i_{m}})=\widetilde{U}_{i_{1}i_{2}\cdots i_{m}}(x_{k(i_{k}-1)};x_{k(i_{k}-1)}\neq 0)$$

\end{proof}

By the theorem 2., we can get the following corollery
\begin{corollery}
Let $O_{i_{1}i_{2}\cdots i_{m}}$, $O_{l_{1}l_{2}\cdots l_{m}}$ be any two different critical points of $f$, with
$${\rm{ind}}(O_{i_{1}i_{2}\cdots i_{m}})-{\rm{ind}}(O_{l_{1}l_{2}\cdots l_{m}})=1.$$
Let $\Gamma(O_{i_{1}i_{2}\cdots i_{m}},O_{l_{1}l_{2}\cdots l_{m}})$ be the numbers of trajectories connectting $O_{i_{1}i_{2}\cdots i_{m}}$ to $O_{l_{1}l_{2}\cdots l_{m}}$, then
$$\Gamma(O_{i_{1}i_{2}\cdots i_{m}},O_{l_{1}l_{2}\cdots l_{m}})=
\begin{cases}
2, \ i_{1}=l_{1},i_{2}=l_{2},\cdots,i_{k-1}=l_{k-1},i_{k}=l_{k}+1,i_{k+1}=l_{k+1},\cdots,i_{m}=l_{m};\\
0, \ otherwise.
\end{cases}
$$
where $k\in 1,2,\cdots,m.$
\end{corollery}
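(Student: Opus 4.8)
The plan is to reduce the corollary to two previously established results: the emptiness criteria of Lemma 9 and Theorem 1 on one hand, and the explicit description of the connecting set in Theorem 2 on the other. First I would dispose of the \emph{otherwise} case, i.e. prove $\Gamma = 0$ whenever the multi-index $(l_1,\ldots,l_m)$ is not obtained from $(i_1,\ldots,i_m)$ by lowering a single entry by one. Suppose $W^{u}(O_{i_{1}\cdots i_{m}})\cap W^{s}(O_{l_{1}\cdots l_{m}})\neq\emptyset$. The contrapositive of Lemma 9 forces $l_k < i_k$ for every $k$, so $i_k - l_k \ge 1$ for all $k$. Combined with the index hypothesis
$$\sum_{k=1}^{m}(i_k - l_k) = {\rm ind}(O_{i_{1}\cdots i_{m}}) - {\rm ind}(O_{l_{1}\cdots l_{m}}) = 1,$$
this forces exactly one index, say the $k$-th, to satisfy $i_k - l_k = 1$ with $i_j = l_j$ for all $j \neq k$. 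Hence if $(l_1,\ldots,l_m)$ is \emph{not} of the form $(i_1,\ldots,i_k-1,\ldots,i_m)$, the intersection is empty, there are no connecting orbits, and $\Gamma = 0$.

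Next I would treat the surviving case $O_{l_{1}\cdots l_{m}} = O_{i_{1}\cdots(i_k-1)\cdots i_{m}}$, where Theorem 2 pins down the intersection exactly as
$$W^{u}(O_{i_{1}\cdots i_{m}})\cap W^{s}(O_{i_{1}\cdots(i_k-1)\cdots i_{m}}) = \widetilde{U}_{i_{1}\cdots i_{m}}(x_{k(i_k-1)};\, x_{k(i_k-1)}\neq 0).$$
In the chart $\widetilde{U}_{i_{1}\cdots i_{m}}$ this set is a single coordinate line with its origin deleted, namely $\{x_{k(i_k-1)}\in\mathbb{R}\setminus\{0\}\}$ with all remaining chart coordinates vanishing. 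The count of trajectories is then the count of orbits of the gradient flow restricted to this one-dimensional, flow-invariant set.

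The key observation is that this punctured line has exactly two connected components, the ray $x_{k(i_k-1)}>0$ and the ray $x_{k(i_k-1)}<0$. The deleted point $x_{k(i_k-1)}=0$ is the source $O_{i_{1}\cdots i_{m}}$ itself (by Lemma 2 it is the unique critical point of $f$ in this chart), which is the $t\to-\infty$ endpoint of every orbit in $W^{u}(O_{i_{1}\cdots i_{m}})$. Since $f$ strictly decreases along a non-constant gradient trajectory and Lemma 2 guarantees no further critical point in the interior of either ray, the restricted flow cannot reverse direction, so each ray is a single orbit running from $O_{i_{1}\cdots i_{m}}$ to $O_{i_{1}\cdots(i_k-1)\cdots i_{m}}$. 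This produces exactly two connecting trajectories, giving $\Gamma = 2$ and completing the dichotomy.

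The step I expect to be the main obstacle is the last one: rigorously arguing that each of the two rays is a \emph{single} trajectory rather than a concatenation of several orbit segments. This rests on knowing that the restricted flow has no interior singular point on either ray and that both endpoints are precisely the two critical points in question; these follow from Lemma 2 together with the global-invariant-manifold structure of Lemma 6 and Lemma 8, but one must be careful to confirm that the orbit limits to $O_{i_{1}\cdots(i_k-1)\cdots i_{m}}$ as $x_{k(i_k-1)}\to\pm\infty$ in the chart — equivalently, that the punctured line is exactly the unstable manifold of the source meeting the stable manifold of the target, which is what Theorem 2 already asserts.
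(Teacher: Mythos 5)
Your overall route is the same as the paper's: the paper offers no separate proof of this corollary, merely remarking that it follows from Theorem 2 (together with Lemma 9 and Theorem 1 for the vanishing cases), and then exhibits the two orbits $\varphi^{\pm}$ explicitly at the start of Section 7. Your second half --- Theorem 2 identifies the intersection with the punctured coordinate line $\{x_{k(i_{k}-1)}\neq 0\}$, whose two connected components are each a single flow line because the restricted flow lives on a one-dimensional invariant manifold with no interior critical point --- is correct and is exactly the content the paper relies on.

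There is, however, a logical slip in your elimination of the ``otherwise'' case. You deduce from the contrapositive of Lemma 9 that $l_{k}<i_{k}$ for \emph{every} $k$, hence $i_{k}-l_{k}\geq 1$ for all $k$; but then $\sum_{k}(i_{k}-l_{k})\geq m$, which for $m\geq 2$ contradicts the index hypothesis $\sum_{k}(i_{k}-l_{k})=1$ and would yield $\Gamma=0$ in \emph{all} cases rather than singling out $l=(i_{1},\dots,i_{k}-1,\dots,i_{m})$ --- indeed in that surviving case $l_{j}=i_{j}$ for $j\neq k$, so your strict inequalities fail there. What the argument actually needs is the weak inequality $l_{j}\leq i_{j}$ for all $j$, i.e. $i_{j}-l_{j}\geq 0$, which combined with the sum being $1$ does force exactly one difference to equal $1$ and the rest to vanish. (The root of the trouble is that Lemma 9 as printed is too strong: its hypothesis ``$l_{k}\geq i_{k}$ for some $k$'' would already contradict Theorem 2, since $l=(i_{1},\dots,i_{k}-1,\dots,i_{m})$ has $l_{j}=i_{j}$ for $j\neq k$ yet the intersection is nonempty; the intended hypothesis is $l_{k}>i_{k}$ for some $k$, whose contrapositive gives precisely the weak inequalities you need.) With that one correction your proof goes through and matches the paper's intended argument.
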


\section{The homology groups of $G_{n,m}(\mathbb{R})$}
Now we choose an orientation of the vector space $E^{u}(O_{i_{1}i_{2}\cdots i_{m}})=T_{O_{i_{1}i_{2}\cdots i_{m}}}W^{u}(O_{i_{1}i_{2}\cdots i_{m}})$ for every critical point $O_{i_{1}i_{2}\cdots i_{m}}$ and denote by $\langle O_{i_{1}i_{2}\cdots i_{m}}\rangle$ the pair consisting of the critical point $O_{i_{1}i_{2}\cdots i_{m}}$ and the orientation.

$$\langle O_{i_{1}i_{2}\cdots i_{m}}\rangle=\{O_{i_{1}i_{2}\cdots i_{m}};(\partial_{11},\partial_{12},\cdots,\partial_{1(i_{1}-1)},$$
$$\partial_{21},\partial_{22},\cdots,\widehat{\partial_{2i_{1}}},\cdots,\partial_{2(i_{2}-1)},$$
$$\partial_{31},\partial_{32},\cdots,\widehat{\partial_{3i_{1}}},\cdots,\widehat{\partial_{3i_{2}}},\cdots,\partial_{3(i_{3}-1)},$$
$$\cdots,$$
$$\partial_{m1},\partial_{m2},\cdots,\widehat{\partial_{mi_{1}}},\cdots,\widehat{\partial_{mi_{2}}},\cdots,\widehat{\partial_{mi_{3}}},\cdots,\widehat{\partial_{mi_{m-1}}},\cdots,\partial_{m(i_{m}-1)})\}$$

By theorem 2., between the critical points $O_{i_{1}i_{2}\cdots i_{m}}$ and $O_{i_{1}i_{2}\cdots (i_{k}-1)\cdots i_{m}}$, we have two orbits $\varphi^{+}(t)$ and $\varphi^{-}(t)$ as following in the coordinate system $\widetilde{U}_{i_{1}i_{2}\cdots i_{m}}$
$$\varphi^{+}(t)=(0,0,0,\cdots,0,x_{k(i_{k}-1)}(t),0,\cdots,0,0,0),\ x_{k(i_{k}-1)}(t)>0$$
where $\lim_{t\rightarrow -\infty}x_{k(i_{k}-1)}(t)=0$, $\lim_{t\rightarrow -\infty}\varphi^{+}(t)=O_{i_{1}i_{2}\cdots i_{m}}$;
$$\varphi^{-}(t)=(0,0,0,\cdots,0,x_{k(i_{k}-1)}(t),0,\cdots,0,0,0),\ x_{k(i_{k}-1)}(t)<0$$
where $\lim_{t\rightarrow -\infty}x_{k(i_{k}-1)}(t)=0$, $\lim_{t\rightarrow -\infty}\varphi^{-}(t)=O_{i_{1}i_{2}\cdots i_{m}}$;
and in the coordinate system $\widetilde{U}_{i_{1}i_{2}\cdots (i_{k}-1)\cdots i_{m}}$
$$\varphi^{+}(t)=(0,0,0,\cdots,0,\frac{1}{x_{k(i_{k}-1)}(t)},0,\cdots,0,0,0),\ x_{k(i_{k}-1)}(t)>0$$
where $\lim_{t\rightarrow +\infty}x_{k(i_{k}-1)}(t)=+\infty$, $\lim_{t\rightarrow +\infty}\varphi^{+}(t)=O_{i_{1}i_{2}\cdots (i_{k}-1)\cdots i_{m}}$;
$$\varphi^{-}(t)=(0,0,0,\cdots,0,\frac{1}{x_{k(i_{k}-1)}(t)},0,\cdots,0,0,0),\ x_{k(i_{k}-1)}(t)<0$$
where $\lim_{t\rightarrow +\infty}x_{k(i_{k}-1)}(t)=-\infty$, $\lim_{t\rightarrow +\infty}\varphi^{+}(t)=O_{i_{1}i_{2}\cdots (i_{k}-1)\cdots i_{m}}$.

\begin{theorem}
$$n_{\varphi^{+}(O_{i_{1}i_{2}\cdots i_{m}},O_{i_{1}i_{2}\cdots (i_{k}-1)\cdots i_{m}})}=1,$$
$$n_{\varphi^{-}(O_{i_{1}i_{2}\cdots i_{m}},O_{i_{1}i_{2}\cdots (i_{k}-1)\cdots i_{m}})}=(-1)^{i_{k}-k}. \ (1\leq k\leq m)$$
\end{theorem}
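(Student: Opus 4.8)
The plan is to unwind Witten's definition of $n_\varphi$ and reduce it to the sign of a single determinant. Recall that for each of the two orbits the flow direction at $O_{i_1i_2\cdots i_m}$ is $v^{\pm}=\lim_{t\to-\infty}|\dot\varphi^{\pm}|^{-1}\dot\varphi^{\pm}=\pm\partial_{k(i_k-1)}$, since by Theorem 2 both orbits run along the single coordinate axis $x_{k(i_k-1)}$. Thus $E^{u}_{\varphi^{+}}(O_{i_1\cdots i_m})$ and $E^{u}_{\varphi^{-}}(O_{i_1\cdots i_m})$ are the same hyperplane of $E^{u}(O_{i_1\cdots i_m})$, namely the span of the chosen ordered basis with $\partial_{k(i_k-1)}$ deleted, but the orientations induced from $\langle O_{i_1\cdots i_m}\rangle$ by contracting with $v^{+}$ and with $v^{-}=-v^{+}$ are opposite. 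First I would fix once and for all the contraction convention (flow direction first) and record the sign $(-1)^{p_0-1}$ coming from the position $p_0$ of $\partial_{k(i_k-1)}$ inside the ordered basis of $\langle O_{i_1\cdots i_m}\rangle$; this already isolates one of the two sources of the discrepancy between $n_{\varphi^{+}}$ and $n_{\varphi^{-}}$.

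Next I would make the flow-induced isomorphism $E^{u}_{\varphi}\to E^{u}(O_{i_1\cdots (i_k-1)\cdots i_m})$ explicit. By Lemma 8 the unstable manifold $W^{u}(O_{i_1\cdots i_m})$ is the coordinate plane in $\widetilde U_{i_1\cdots i_m}$ on which only the $p$-unstable coordinates are free, so I can parametrise it by those coordinates, with $x_{k(i_k-1)}=s$ playing the role of the flow parameter. I would then push this frame into the chart $\widetilde U_{i_1\cdots (i_k-1)\cdots i_m}$ using the transition map of Theorem 2, whose leading relation is $x_{k(i_k-1)}=1/y_{ki_k}$. Carrying out the induced change of frame, each basis vector $\partial_{\alpha j}$ of $E^{u}_{\varphi}$ maps to a basis vector of $E^{u}(O_{i_1\cdots (i_k-1)\cdots i_m})$ up to an explicit scalar: the vectors in rows $\alpha<k$ go over with coefficient $+1$; those in row $k$ acquire a factor $s^{-1}$; and in each row $\alpha>k$ the vector $\partial_{\alpha(i_k-1)}$ is sent to $\mp s^{-1}\partial_{\alpha i_k}$, i.e. the column index is shifted $i_k-1\mapsto i_k$. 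The determinant of the flow isomorphism is therefore, up to a positive power of $|s|$, the product of these coefficients times the sign of the permutation that realises this reindexing; reading off its sign gives $n_\varphi$. Comparing the two orbits, the replacement $s\mapsto -s$ flips the sign of each $s^{-1}$ factor, and together with the orientation flip from the previous paragraph this is what I expect to produce the factor $(-1)^{i_k-k}$, the exponent counting the affected entries of row $k$ together with the reindexed entries of the lower rows.

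The main obstacle is the orientation bookkeeping, not any hard analysis. Two points need care. First, the genuine flow-induced isomorphism is the $t\to+\infty$ limit of the linearised flow along $\varphi$, not the raw transition Jacobian; I would justify that the two differ only by the expansion factors of $-\nabla f$ along the unstable directions, which are positive and hence do not affect the sign of the determinant, so that the transition-map computation above does compute $n_\varphi$ correctly. Second, and most delicate, is making the permutation-sign count coherent: I must track the exact position of the flow direction in $\langle O_{i_1\cdots i_m}\rangle$, the position of the incoming direction $\partial_{ki_k}$ relative to the ordered basis of $\langle O_{i_1\cdots (i_k-1)\cdots i_m}\rangle$, and the reindexing in the rows below $k$, all with respect to the orientations $\langle\cdot\rangle$ fixed above. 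Once the convention is normalised so that $n_{\varphi^{+}}=1$, the whole computation collapses to counting, modulo $2$, the entries of the frame that are moved; verifying that this count equals $i_k-k$ and is independent of the ambient indices $i_{k+1},\dots,i_m$ is the step I would spend the most effort on.
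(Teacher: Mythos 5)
Your strategy is the same as the paper's: both reduce $n_{\varphi^{\pm}}$ to the sign of the chart-transition Jacobian applied to the oriented unstable frames, with the difference between the two orbits coming from the reversed flow direction $v^{\mp}=\mp\partial_{k(i_k-1)}$ together with the odd powers of $s^{-1}=1/x_{k(i_k-1)}$. Where you differ is that you notice the two sign sources the paper suppresses: the position of the flow direction inside the ordered frame, and --- crucially --- the fact that for every row $\alpha>k$ the unstable vector $\partial_{\alpha(i_k-1)}$ is carried to $-s^{-1}\partial_{\alpha i_k}$. The paper's displayed Jacobian is diagonal with entries $1$ outside the row-$k$ block, i.e.\ it treats those $m-k$ directions as untouched; your transition-map computation of them is the correct one, and your point that the genuine flow isomorphism differs from the raw transition differential only by positive factors is also sound (the relevant two-dimensional coordinate planes are invariant and the flow is equivariant under $x_{\alpha(i_k-1)}\mapsto-x_{\alpha(i_k-1)}$).

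The gap is that you stop before the count, and the count you sketch does not give the stated formula. Between $\varphi^{+}$ and $\varphi^{-}$ the sign flips are: one from $v^{-}=-v^{+}$, $i_k-k-1$ from the row-$k$ factors $s^{-1}$, and $m-k$ from the lower-row factors $-s^{-1}$ (the positions in the two ordered frames match entry by entry, so there is no additional permutation sign). This gives $n_{\varphi^{-}}/n_{\varphi^{+}}=(-1)^{i_k+m-2k}=(-1)^{i_k+m}$, which agrees with the theorem's $(-1)^{i_k-k}$ only when $m-k$ is even; your own phrase ``the affected entries of row $k$ together with the reindexed entries of the lower rows'' already counts $i_k+m-2k$ flips, not $i_k-k$. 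The discrepancy is not a defect of your plan but of the target statement: for $G_{3,2}(\mathbb{R})\cong\mathbb{R}P^{2}$, taking $p=O_{23}$, $q=O_{13}$, $k=1$, the theorem yields $n(p,q)=1+(-1)^{2-1}=0$ and hence $H_{1}=H_{2}=\mathbb{Z}$, contradicting $H_{*}(\mathbb{R}P^{2};\mathbb{Z})=(\mathbb{Z},\mathbb{Z}_{2},0)$, whereas including the single lower-row factor gives $n(p,q)=\pm2$ and the correct homology (likewise the theorem would force $H_{4}(G_{4,2}(\mathbb{R}))=0$ although that manifold is orientable). So carry your computation through to the end and trust it over the claimed exponent; the correction then has to be propagated into Lemma 11 and Theorem 4.
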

\begin{proof}
Because the tangent vector of $\varphi^{\pm}(O_{i_{1}i_{2}\cdots i_{m}},O_{i_{1}i_{2}\cdots (i_{k}-1)\cdots i_{m}})$ at $O_{i_{1}i_{2}\cdots i_{m}}$ is $\pm\partial_{k(i_{k}-1)}$, then $\langle O_{i_{1}i_{2}\cdots i_{m}}\rangle$ induces an orientation on the orthogonal complement $E^{u}_{\varphi^{\pm}}(O_{i_{1}i_{2}\cdots i_{m}})$ of $\pm\partial_{k(i_{k}-1)}$ in $E^{u}(O_{i_{1}i_{2}\cdots i_{m}})$ as follow
$$(\pm\partial_{11},\partial_{12},\cdots,\partial_{1(i_{1}-1)},$$
$$\partial_{21},\partial_{22},\cdots,\widehat{\partial_{2i_{1}}},\cdots,\partial_{2(i_{2}-1)},$$
$$\partial_{31},\partial_{32},\cdots,\widehat{\partial_{3i_{1}}},\cdots,\widehat{\partial_{3i_{2}}},\cdots,\partial_{3(i_{3}-1)},$$
$$\cdots,$$
$$\partial_{k1},\partial_{k2},\cdots,\widehat{\partial_{ki_{1}}},\cdots,\widehat{\partial_{ki_{2}}},\cdots,\partial_{k(i_{k}-2)},$$
$$\cdots,$$
$$\partial_{m1},\partial_{m2},\cdots,\widehat{\partial_{mi_{1}}},\cdots,\widehat{\partial_{mi_{2}}},\cdots,\widehat{\partial_{mi_{3}}},\cdots,\widehat{\partial_{mi_{m-1}}},\cdots,\partial_{m(i_{m}-1)}).$$\\
Because
$$\langle O_{i_{1}i_{2}\cdots (i_{k}-1)\cdots i_{m}}\rangle=\{O_{i_{1}i_{2}\cdots (i_{k}-1)\cdots i_{m}};(\partial_{11},\partial_{12},\cdots,\partial_{1(i_{1}-1)},$$
$$\partial_{21},\partial_{22},\cdots,\widehat{\partial_{2i_{1}}},\cdots,\partial_{2(i_{2}-1)},$$
$$\partial_{31},\partial_{32},\cdots,\widehat{\partial_{3i_{1}}},\cdots,\widehat{\partial_{3i_{2}}},\cdots,\partial_{3(i_{3}-1)},$$
$$\cdots,$$
$$\partial_{k1},\partial_{k2},\cdots,\widehat{\partial_{ki_{1}}},\cdots,\widehat{\partial_{ki_{2}}},\cdots,\partial_{k(i_{k}-2)},$$
$$\cdots,$$
$$\partial_{m1},\partial_{m2},\cdots,\widehat{\partial_{mi_{1}}},\cdots,\widehat{\partial_{mi_{2}}},\cdots,\widehat{\partial_{mi_{3}}},\cdots,\widehat{\partial_{mi_{m-1}}},\cdots,\partial_{m(i_{m}-1)})\}$$
to compute $n_{\varphi^{\pm}(O_{i_{1}i_{2}\cdots i_{m}},O_{i_{1}i_{2}\cdots (i_{k}-1)\cdots i_{m}})}$, we compute the Jacobian of the coordinate transformation from $\widetilde{U}_{i_{1}i_{2}\cdots i_{m}}$ to $\widetilde{U}_{i_{1}i_{2}\cdots (i_{k}-1)\cdots i_{m}}$, we denote it by $J_{n_{\varphi^{\pm}(O_{i_{1}i_{2}\cdots i_{m}},O_{i_{1}i_{2}\cdots (i_{k}-1)\cdots i_{m}})}}$,
$$J_{n_{\varphi^{\pm}(O_{i_{1}i_{2}\cdots i_{m}},O_{i_{1}i_{2}\cdots (i_{k}-1)\cdots i_{m}})}}=$$
$${\rm{diag}}(1,\cdots,1,\frac{1}{x_{k(i_{k}-1)}},\cdots,\frac{1}{x_{k(i_{k}-1)}},-\frac{1}{x^{2}_{k(i_{k}-1)}},\frac{1}{x_{k(i_{k}-1)}},\cdots,\frac{1}{x_{k(i_{k}-1)}},1,\cdots,1)$$
$$J_{n_{\varphi^{\pm}(O_{i_{1}i_{2}\cdots i_{m}},O_{i_{1}i_{2}\cdots (i_{k}-1)\cdots i_{m}})}}(\pm\partial_{11},\partial_{12},\cdots,\partial_{1(i_{1}-1)},$$
$$\partial_{21},\partial_{22},\cdots,\widehat{\partial_{2i_{1}}},\cdots,\partial_{2(i_{2}-1)},$$
$$\partial_{31},\partial_{32},\cdots,\widehat{\partial_{3i_{1}}},\cdots,\widehat{\partial_{3i_{2}}},\cdots,\partial_{3(i_{3}-1)},$$
$$\cdots,$$
$$\partial_{k1},\partial_{k2},\cdots,\widehat{\partial_{ki_{1}}},\cdots,\widehat{\partial_{ki_{2}}},\cdots,\partial_{k(i_{k}-2)},$$
$$\cdots,$$
$$\partial_{m1},\partial_{m2},\cdots,\widehat{\partial_{mi_{1}}},\cdots,\widehat{\partial_{mi_{2}}},\cdots,\widehat{\partial_{mi_{3}}},\cdots,\widehat{\partial_{mi_{m-1}}},\cdots,\partial_{m(i_{m}-1)})$$
$$=(\pm\partial_{11},\partial_{12},\cdots,\partial_{1(i_{1}-1)},$$
$$\partial_{21},\partial_{22},\cdots,\widehat{\partial_{2i_{1}}},\cdots,\partial_{2(i_{2}-1)},$$
$$\partial_{31},\partial_{32},\cdots,\widehat{\partial_{3i_{1}}},\cdots,\widehat{\partial_{3i_{2}}},\cdots,\partial_{3(i_{3}-1)},$$
$$\cdots,$$
$$\frac{\partial_{k1}}{x_{k(i_{k}-1)}},\frac{\partial_{k2}}{x_{k(i_{k}-1)}},\cdots,\widehat{\partial_{ki_{1}}},\cdots,\widehat{\partial_{ki_{2}}},\cdots,\frac{\partial_{k(i_{k}-2)}}{x_{k(i_{k}-1)}},$$
$$\cdots,$$
$$\partial_{m1},\partial_{m2},\cdots,\widehat{\partial_{mi_{1}}},\cdots,\widehat{\partial_{mi_{2}}},\cdots,\widehat{\partial_{mi_{3}}},\cdots,\widehat{\partial_{mi_{m-1}}},\cdots,\partial_{m(i_{m}-1)})$$

So we get, if $x_{k(i_{k}-1)}>0$ then $n_{\varphi^{+}(O_{i_{1}i_{2}\cdots i_{m}},O_{i_{1}i_{2}\cdots (i_{k}-1)\cdots i_{m}})}=1$;
if $x_{k(i_{k}-1)}<0$ then $n_{\varphi^{-}(O_{i_{1}i_{2}\cdots i_{m}},O_{i_{1}i_{2}\cdots (i_{k}-1)\cdots i_{m}})}=(-1)^{i_{k}-k}$.
\end{proof}
By Theorem 3., we have $$n(O_{i_{1}i_{2}\cdots i_{m}},O_{i_{1}i_{2}\cdots (i_{k}-1)\cdots i_{m}})=n_{\varphi^{+}(O_{i_{1}i_{2}\cdots i_{m}},O_{i_{1}i_{2}\cdots (i_{k}-1)\cdots i_{m}})}+n_{\varphi^{-}(O_{i_{1}i_{2}\cdots i_{m}},O_{i_{1}i_{2}\cdots (i_{k}-1)\cdots i_{m}})}$$

Then we can get the result about Witten's boundary operator
\begin{lemma}
$$\partial \langle O_{i_{1}i_{2}\cdots i_{m}}\rangle=\sum_{k} (1+(-1)^{i_{k}-k})\langle O_{i_{1}i_{2}\cdots (i_{k}-1)\cdots i_{m}}\rangle, \ (1\leq k\leq m)$$
and if $i_{1}-1,i_{2}-2,\cdots,i_{m}-m$ all is odd, then
$$\partial \langle O_{i_{1}i_{2}\cdots i_{m}}\rangle=0$$
\end{lemma}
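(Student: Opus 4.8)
The plan is to read this lemma off directly from the definition of Witten's boundary operator together with the three results just proved; essentially no new computation is required, only assembly. Recall that by definition $\partial\langle O_{i_1 i_2 \cdots i_m}\rangle = \sum_q n(O_{i_1\cdots i_m}, q)\langle q\rangle$, where $q$ ranges over the critical points whose index is exactly one less than $\mathrm{ind}(O_{i_1\cdots i_m})$. The first step is therefore to pin down which such $q$ can actually contribute, i.e. for which $q$ one has $W^u(O_{i_1\cdots i_m}) \cap W^s(q) \neq \emptyset$.

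For this identification step I would invoke Theorem 1 together with the observation following it: whenever the index drops by one and the relevant unstable and stable manifolds meet, there is exactly one index $k$ with $i_k - l_k = 1$ and $i_j = l_j$ for all $j \neq k$. Hence the only possible contributors are the points $O_{i_1\cdots(i_k-1)\cdots i_m}$ for $k = 1,\ldots,m$, and by Lemma 3 each of these does sit in degree $\mathrm{ind}(O_{i_1\cdots i_m}) - 1$. Here I would flag the one genuine bookkeeping caveat: the symbol $O_{i_1\cdots(i_k-1)\cdots i_m}$ names an actual critical point only when $i_k - 1 > i_{k-1}$, so that decreasing the $k$-th entry preserves the strict order $1 \le j_1 < \cdots < j_m \le n$; for indices $k$ with $i_k = i_{k-1}+1$ (and for $k=1$ when $i_1=1$) the corresponding term is vacuous and is simply dropped from the sum.

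Next I would compute each surviving coefficient $n(O_{i_1\cdots i_m}, O_{i_1\cdots(i_k-1)\cdots i_m})$. By the corollary to Theorem 2 there are exactly two connecting trajectories $\varphi^+$ and $\varphi^-$, so $n(O_{i_1\cdots i_m}, O_{i_1\cdots(i_k-1)\cdots i_m}) = n_{\varphi^+} + n_{\varphi^-}$. Substituting the signs established in Theorem 3, namely $n_{\varphi^+}=1$ and $n_{\varphi^-}=(-1)^{i_k-k}$, gives $n(O_{i_1\cdots i_m}, O_{i_1\cdots(i_k-1)\cdots i_m}) = 1 + (-1)^{i_k - k}$. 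Feeding these coefficients back into the definition of $\partial$ then yields
$$\partial\langle O_{i_1 i_2 \cdots i_m}\rangle = \sum_k \bigl(1 + (-1)^{i_k - k}\bigr)\,\langle O_{i_1 i_2 \cdots (i_k - 1) \cdots i_m}\rangle,$$
which is the asserted formula.

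Finally, for the vanishing statement I would simply note that if every $i_k - k$ is odd, then $(-1)^{i_k-k} = -1$ for all $k$, so each coefficient $1 + (-1)^{i_k-k}$ equals $0$ and the entire sum collapses, giving $\partial\langle O_{i_1\cdots i_m}\rangle = 0$. Since the whole argument is formal bookkeeping on top of the earlier theorems, I do not expect a serious obstacle; the only point requiring real care is the caveat above, namely ensuring that the summation index $k$ ranges only over those values for which $O_{i_1\cdots(i_k-1)\cdots i_m}$ is a legitimate critical point, and confirming via Lemma 3 that every retained target lies in the correct degree so that no term of the wrong index enters $\partial$.
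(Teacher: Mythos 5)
Your proposal is correct and follows essentially the same route as the paper: identify the possible targets via Theorem 1 and its corollary, sum the signs $n_{\varphi^+}=1$ and $n_{\varphi^-}=(-1)^{i_k-k}$ from Theorem 3 to get the coefficient $1+(-1)^{i_k-k}$, and observe the vanishing when every $i_k-k$ is odd. Your added caveat that the term for index $k$ must be dropped when $i_k-1=i_{k-1}$ (or $i_1=1$) is a point the paper leaves implicit, but it does not change the argument.
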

\begin{proof}
Because $n_{\varphi^{+}(O_{i_{1}i_{2}\cdots i_{m}},O_{i_{1}i_{2}\cdots (i_{k}-1)\cdots i_{m}})}+n_{\varphi^{-}(O_{i_{1}i_{2}\cdots i_{m}},O_{i_{1}i_{2}\cdots (i_{k}-1)\cdots i_{m}})}=1+(-1)^{i_{k}-k}$ so
$n(O_{i_{1}i_{2}\cdots i_{m}},O_{i_{1}i_{2}\cdots (i_{k}-1)\cdots i_{m}})=1+(-1)^{i_{k}-k}$, and by the definition of Witten's boundary operator, we get
$$\partial \langle O_{i_{1}i_{2}\cdots i_{m}}\rangle=\sum_{k} (1+(-1)^{i_{k}-k})\langle O_{i_{1}i_{2}\cdots (i_{k}-1)\cdots i_{m}}\rangle.$$
When $i_{1}-1,i_{2}-2,\cdots,i_{m}-m$ all is odd, then $1+(-1)^{i_{k}-k}=0$ so get the result.
\end{proof}

Now we can give the most important result in this article,
\begin{theorem}
The homology groups of real Grassmann manifold $G_{n,m}(\mathbb{R})$ with integral coefficients by Witten complex is
$$H_{r}(G_{n,m}(\mathbb{R}),\mathbb{Z})=\bigoplus_{k}(\bigoplus_{j_{1}+j_{2}+\cdots+j_{m}=r\atop j_{k}+1-k=odd}\mathbb{Z}_{2}\oplus\bigoplus_{j_{1}+j_{2}+\cdots+j_{m}=r\atop j_{k}+1-k=even}\mathbb{Z})$$
where $k=1,2,\cdots,m.$
\end{theorem}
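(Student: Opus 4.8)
The plan is to read the homology straight off the explicit boundary operator of Lemma 11, after a change of indexing that turns the Witten complex into a sum of elementary ``multiplication by two'' pieces. First I would reindex the critical points: to $O_{i_{1}i_{2}\cdots i_{m}}$ associate the sequence $(j_{1},\ldots,j_{m})$ with $j_{\alpha}=i_{\alpha}-\alpha$, so that $0\le j_{1}\le j_{2}\le\cdots\le j_{m}\le n-m$ and, by Lemma 3, $\mathrm{ind}(O_{i_{1}\cdots i_{m}})=j_{1}+\cdots+j_{m}$. Thus $C_{r}$ is the free abelian group on those $(j_{1},\ldots,j_{m})$ with $\sum_{\alpha}j_{\alpha}=r$. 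In these coordinates Lemma 11 becomes
$$\partial\langle j_{1},\ldots,j_{m}\rangle=\sum_{k}(1+(-1)^{j_{k}})\langle j_{1},\ldots,j_{k}-1,\ldots,j_{m}\rangle,$$
the sum running over those $k$ for which $j_{k}$ is even and $j_{k}>j_{k-1}$ (with $j_{0}:=0$), so that the target is again admissible. Every structure constant is then $0$ or $2$, and writing $\partial=2D$ exhibits $D$ as an integer matrix; since $\partial^{2}=0$ (Witten's identity recalled in the Introduction), $D$ is itself the differential of a complex of free abelian groups.

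Next I would extract the homology from this shape. A complex of free abelian groups all of whose differentials are twice a matrix $D$ with $D^{2}=0$ splits, after a Smith--normal--form change of basis in each $C_{r}$, into a direct sum of trivial pieces $0\to\mathbb{Z}\to 0$ and elementary pieces $0\to\mathbb{Z}\xrightarrow{2}\mathbb{Z}\to 0$. Once one checks that $\mathrm{im}\,D_{r+1}$ is a saturated sublattice of $\ker D_{r}$ (equivalently, that the nonzero elementary divisors of each $D_{r}$ equal $1$), this yields
$$H_{r}(G_{n,m}(\mathbb{R}),\mathbb{Z})\cong\mathbb{Z}^{\beta_{r}}\oplus(\mathbb{Z}_{2})^{\rho_{r+1}},\qquad \beta_{r}=c_{r}-\rho_{r}-\rho_{r+1},$$
where $c_{r}$ is the number of admissible $(j_{1},\ldots,j_{m})$ of size $r$ and $\rho_{r}=\mathrm{rank}\,D_{r}$. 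Each elementary piece contributes one $\mathbb{Z}_{2}$ in the lower degree, while each unpaired generator contributes a free $\mathbb{Z}$; the case $m=1$ recovers exactly the cellular complex of $\mathbb{R}P^{n-1}$.

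It then remains to evaluate the ranks $\rho_{r}$ and to organise the free and torsion generators into the stated closed form. For this I would build an explicit acyclic matching on the admissible sequences: to each sequence possessing an active coordinate (a $k$ with $j_{k}$ even and $j_{k}>j_{k-1}$) I pair it, via a canonical choice of such $k$, with its decrement, so that the matched pairs are precisely the elementary $\mathbb{Z}\xrightarrow{2}\mathbb{Z}$ blocks and the unmatched (critical) sequences index the free summands. Recording for each critical point the distinguished index $k$ together with the parity of $j_{k}$ (that is, of $i_{k}-k$), and then summing over $k=1,\ldots,m$ and over sequences of fixed size $r$, repackages the count exactly as
$$H_{r}=\bigoplus_{k}\Big(\bigoplus_{\sum j=r,\ j_{k}+1-k\ \mathrm{odd}}\mathbb{Z}_{2}\ \oplus\ \bigoplus_{\sum j=r,\ j_{k}+1-k\ \mathrm{even}}\mathbb{Z}\Big).$$

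The main obstacle is combinatorial rather than analytic. The inequalities $j_{1}\le\cdots\le j_{m}$ couple the coordinates, so the naive coordinate-wise (Künneth) splitting into $m$ copies of the projective-space complex fails, and the canonical active coordinate must be chosen so that the matching is genuinely acyclic and the differential induced on the critical generators vanishes. The second delicate point is the saturation of $\mathrm{im}\,D$, i.e.\ that all elementary divisors are $1$: this is what guarantees the torsion is pure $2$-torsion and that no $\mathbb{Z}_{4}$ or higher occurs. I would establish it from the local two-term structure of $D$, where each generator maps onto its target with unit coefficient, specialising the general argument to the $\mathbb{R}P^{n-1}$ case already understood.
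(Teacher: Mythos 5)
Your route is the paper's route: both proofs read the homology directly off the combinatorial complex supplied by Lemma 11, whose structure constants are all $0$ or $2$. The difference is rigor, and on the central point your version is the sounder one. The paper simply declares that $\ker\partial_r$ is spanned by the generators annihilated term by term (those with every $j_\alpha-\alpha$ odd); this is false. Already in $G_{6,2}(\mathbb{R})$ one computes from Lemma 11 that $\partial\langle O_{35}\rangle=2\langle O_{25}\rangle=\partial\langle O_{26}\rangle$, so $\langle O_{35}\rangle-\langle O_{26}\rangle$ lies in $\ker\partial_{5}$ although neither symbol meets the parity condition. Your reduction $\partial=2D$, the splitting of a bounded complex of finitely generated free abelian groups into elementary pieces, and the isolation of saturation (unit elementary divisors of $D$) as the one thing left to check are exactly the bookkeeping the paper omits, and your intermediate conclusion $H_r\cong\mathbb{Z}^{\beta_r}\oplus(\mathbb{Z}_2)^{\rho_{r+1}}$ is the correct consequence of Lemma 11 plus saturation.

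Two genuine gaps remain, and they are fatal to the argument as a proof of the displayed statement. First, your appeal to $\partial^2=0$ is inconsistent with the coefficients you are using: Lemma 11 gives $\partial\langle O_{36}\rangle=2\langle O_{26}\rangle+2\langle O_{35}\rangle$, hence $\partial^2\langle O_{36}\rangle=8\langle O_{25}\rangle\neq 0$, because the coefficients $1+(-1)^{i_k-k}$ are never negative and cannot cancel. So either Lemma 11's incidence numbers are wrong (they are: they also force $\partial$ of the top cell of $G_{4,2}(\mathbb{R})$ to be $2\langle O_{24}\rangle\neq 0$, contradicting orientability of $G_{2}(\mathbb{R}^4)$), or there is no chain complex to take homology of; your $D$ does not exist until the incidence numbers are corrected with signs. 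Second, the final ``repackaging'' is asserted rather than proved, and it cannot be carried out: read literally, the theorem's outer $\bigoplus_k$ attaches one summand to each pair consisting of an admissible sequence of size $r$ and an index $k\in\{1,\dots,m\}$, so each critical point contributes $m$ summands. For $G_{4,2}(\mathbb{R})$ in degree $2$ this yields four summands, whereas the complex (and the true homology) gives a single $\mathbb{Z}_2$; no choice of a distinguished $k$ per critical sequence reconciles your $\mathbb{Z}^{\beta_r}\oplus(\mathbb{Z}_2)^{\rho_{r+1}}$ with the stated closed form. In short, your structural analysis improves on the paper's own proof, but the last step is missing and, as the statement is written, impossible.
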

\begin{proof}
Since Witten chain complex $$0\rightarrow C_{m(n-m)}\rightarrow C_{m(n-m)-1}\rightarrow\cdots\rightarrow C_{1}\rightarrow C_{0}\rightarrow 0$$
where
$$C_{r}=\bigoplus_{i_{1}+i_{2}+\cdots+i_{m}=r}\mathbb{Z}\langle O_{i_{1}i_{2}\cdots i_{m}}\rangle.$$
By lemma 11., we have
$$\partial(C_{r+1})=\bigoplus_{i_{1}+i_{2}+\cdots+i_{m}=r+1}\mathbb{Z}\partial\langle O_{i_{1}i_{2}\cdots i_{m}}\rangle$$
$$=\bigoplus_{i_{1}+i_{2}+\cdots+i_{m}=r+1}\mathbb{Z}(\sum_{k} (1+(-1)^{i_{k}-k})\langle O_{i_{1}i_{2}\cdots (i_{k}-1)\cdots i_{m}}\rangle);$$
$\ker\{\partial:C_{r}\longrightarrow C_{r-1}\}=\bigoplus_{j_{1}+j_{2}+\cdots+j_{m}=r}\mathbb{Z}\langle O_{j_{1}j_{2}\cdots j_{m}}\rangle$, where $j_{1}-1,j_{2}-2,\cdots,j_{m}-m$ all is odd.
By the definition of homology groups $$H^{W}_{r}(M,\mathbb{Z})=\frac{\ker\{\partial:C_{r}\longrightarrow C_{r-1} \}}{\partial(C_{r+1})},$$
then
$$H^{W}_{r}(G_{n,m}(\mathbb{R}),\mathbb{Z})=\bigoplus_{k}(\bigoplus_{j_{1}+j_{2}+\cdots+j_{m}=r\atop j_{k}+1-k=odd}\mathbb{Z}_{2}\oplus\bigoplus_{j_{1}+j_{2}+\cdots+j_{m}=r\atop j_{k}+1-k=even}\mathbb{Z})$$
where $k=1,2,\cdots,m.$
And because $H^{W}_{r}(G_{n,m}(\mathbb{R}),\mathbb{Z})=H_{r}(G_{n,m}(\mathbb{R}),\mathbb{Z})$, so we get the result.

\end{proof}

\end{CJK}
\end{document}